\definecolor{citation}{rgb}{0.2,0.58,0.2} 
\definecolor{formula}{rgb}{0.1,0.2,0.6}
\definecolor{url}{rgb}{0.3,0,0.5} 
\definecolor{marrone}{rgb}{0.7,0.45,0.36} 
\newtheorem{thm}{Theorem}[section]
\newtheorem{corollary}[thm]{Corollary}
\newtheorem{lemma}[thm]{Lemma}
\newtheorem{prop}[thm]{Proposition}
\theoremstyle{definition}
\theoremstyle{rem}
\newtheorem{rem}[thm]{Remark}
\numberwithin{equation}{section}
\newcommand{\R}{{\mathds R}}
\newcommand{\N}{{\mathds N}}
\newcommand{\Z}{{\mathds Z}}
\newcommand{\cc}{{\mathcal C}}
\newcommand{\ff}{{\mathcal F}}
\newcommand{\ii}{{\mathcal I}}
\newcommand{\Om}{\Omega}
\newcommand{\Omb}{\overline{\Omega}}
\newcommand{\bb}{\dot{B}^{-\alpha}_{\infty,\infty}}
\def\Xint#1{\mathchoice
      {\XXint\displaystyle\textstyle{#1}}%
      {\XXint\textstyle\scriptstyle{#1}}%
      {\XXint\scriptstyle\scriptscriptstyle{#1}}%
      {\XXint\scriptscriptstyle\scriptscriptstyle{#1}} %
\!\int}
   \def\XXint#1#2#3{{\setbox0=\hbox{$#1{#2#3}{\int}$}
        \vcenter{\hbox{$#2#3$}}\kern-.5\wd0}}
   \def\dashint{\Xint-}
\newcommand{\ue}{u_{\varepsilon}}
\newcommand{\tows}{\stackrel{\ast}{\rightharpoonup}}
\newcommand{\tow}{\rightharpoonup}
\def\dys{\displaystyle}
\def\eps{\varepsilon}
\def\mea{\mathcal{M}({\mathds{R}^N})}
\def\ue{u_{\eps}}
\newlength{\defbaselineskip}
\DeclareRobustCommand*{\bfseries}{%
  \not@math@alphabet\bfseries\mathbf
  \fontseries\bfdefault\selectfont
  \boldmath
}
\title 
[Improved Sobolev embeddings]
{Improved Sobolev embeddings, profile decomposition,   
 and concentration-compactness  for fractional Sobolev spaces}
\author[G. Palatucci]{Giampiero Palatucci}
\email[G. Palatucci]{\href{mailto:giampiero.palatucci@unimes.fr}{giampiero.palatucci@unimes.fr}}
\author[A. Pisante]{Adriano Pisante}
\email[A. Pisante]{\href{mailto:pisante@mat.uniroma1.it}{pisante@mat.uniroma1.it}}
\address[A. Pisante]
{Dipartimento di Matematica,  Sapienza Universit\`a di Roma
\\ P.~\!le Aldo Moro, 5
\\ 00185 Roma, Italia}
\address[G. Palatucci]
{
Dipartimento di Matematica e Informatica, Universit\`a degli Studi di Parma
\\ Campus - Parco Area delle Scienze,~53/A
\\ 43124 Parma, Italia}
\begin{document}

\subjclass[2010]{Primary 35J60;
Secondary 35C20, 35B33, 49J45\vspace{1mm}}

\keywords{Refined Sobolev inequalities, concentration-compactness principle, profile decomposition, critical Sobolev exponent, dislocation spaces, Morrey spaces, Besov spaces, fractional Sobolev spaces.
\vspace{1mm}}

\begin{abstract}
We obtain an improved Sobolev inequality in $\dot{H}^s$ spaces involving Morrey norms.
This refinement yields a direct proof of the existence of optimizers and the compactness up to symmetry of optimizing sequences for the usual Sobolev embedding.
More generally, it allows to derive an alternative, more transparent proof of the profile decomposition in $\dot{H}^s$ obtained 
in~\cite{Ger98} 
using the abstract approach of dislocation spaces developed in \cite{TF07}.
We also analyze directly the local defect of compactness of the Sobolev embedding in terms of measures in the spirit of
~\cite{lions,lions2}. 
As a model application, we study the asymptotic limit of a family of subcritical problems, obtaining concentration results for the corresponding optimizers which are well known when $s$ is an integer (\cite{rey,Han91}, \cite{ChGe}).
\end{abstract}

\maketitle

\section{Introduction} 

Fractional Sobolev spaces have been a classical topic in Functional and Harmonic Analysis as well as in Partial Differential Equations all the time. A great attention has been focused on the study of problems involving fractional spaces, and, more recently, the corresponding nonlocal equations, both from a pure mathematical point of view and for concrete applications, since they naturally show up in many different contexts.
For an elementary introduction to this topic and a wide, but still very limited, list of related references we refer to~\cite{DPV12}.
Here we are interested in the simplest among them, namely  the spaces $H^s(\R^N)$ and $\dot{H}^s(\R^N)$, which are their homogeneous counterpart, and in the corresponding Sobolev inequalities.
\vspace{1mm}

Let $N\geq 1$ and for each $0<s<N/2$ denote by $H^s(\R^N)$ the usual\footnote{
We immediately refer to Section~\ref{sec_spaces} for the basic definitions and some properties of the relevant spaces we deal with in the paper.
} $L^2-$based fractional Sobolev spaces and $\dot{H}^s(\R^N)$ its homogeneous version
defined via Fourier transform as the completion of $C_0^\infty(\mathds{R}^N)$ under the norm 
\begin{equation}
\label{Hs0-norm}
\| u\|^2_{\dot{H}^s}=\int_{\mathds{R}^N}|\xi|^{2s}|\hat{u}(\xi)|^2 d\xi \, .
\end{equation}

In the present paper, we mainly focus our attention on fractional Sobolev embeddings 
$\dot{H}^s(\R^N) \hookrightarrow L^{2^\ast}\!(\R^N)$,
and the corresponding inequality
\begin{equation}\label{eq_sobolev}
\|u\|^{2^\ast}_{L^{2^\ast}}  \leq \, S^\ast \|u\|^{2^\ast}_{\dot{H}^s} \quad \forall u \in \dot{H}^s(\R^N),
\end{equation}
where 
$2^\ast=2N/(N-2s)$ is the critical Sobolev exponent, so that $2^\ast \in (2,\infty)$ as $s \in (0,\, N/2)$, which allows to define the {\em fractional Laplacian} e.~\!g. as a bounded linear operator $(-\Delta)^{s/2}: \dot{H}^s(\mathds{R}^N) \to L^2 (\mathds{R}^N)$.

\vspace{2mm}

First, we recall that, using the important results in \cite{lieb} on the related Hardy-Littlewood-Sobolev inequality, the optimal constant in the Sobolev inequality~\eqref{eq_sobolev} was computed in \cite[Theorem 1.1]{cotsiolis}, namely
\begin{equation}\label{def_sstar}
\dys
S^{\ast}=\left(2^{-2s}\pi^{-{s}}\frac{\mathbf\Gamma\left(\frac{N-2s}{2}\right)}{\mathbf\Gamma\left(\frac{N+2s}{2}\right)}\left[\frac{\mathbf\Gamma(N)}{\mathbf\Gamma(N/2)}\right]^{2s/N}\right)^{\!\!\frac{2^{\ast}}{2}}\!\!,
\end{equation}
 together with the explicit formula for those functions giving equality in the inequality. Precisely, for $u\neq 0$, we have equality in {\rm \eqref{eq_sobolev}} if and only if
\begin{equation}\label{def_talentiana}
\dys
u(x)=\frac{c}{(\lambda^2+|x-x_0|^2)^{\frac{N-2s}{2}}} \ \ \forall x\in \R^N,
\end{equation}
where $c \in \R\setminus\{0\}$ and $\lambda >0$ are constants and  $x_0\in \R^N$ is a fixed point.

When $s=1$ the Sobolev inequality \eqref{eq_sobolev} as well as the previous results have been proved in~\cite{talenti} and also in~\cite{aubin}, where the subtle connection of \eqref{eq_sobolev}-\eqref{def_talentiana} with the Yamabe problem in Riemannian geometry is discussed.
When $2\leq s<N/2$ is an even integer the same result was obtained some years later in~\cite{Sw}, following the ideas in~\cite{lions} and~\cite{lions2}.
Also, the case~$s=1/2$ has been already studied in~\cite{Es} in an equivalent form given by the $s$-harmonic extension (see \eqref{tracesobolev}-\eqref{fracextension} in Section~\ref{sec_fractional}), in connection with the Yamabe problem on manifolds with boundary.

\vspace{1mm}

Using the moving planes method, formula \eqref{def_talentiana} has been obtained independently by Chen, Li \& Ou in \cite{ouLi}. 
At least when $0<s<1$, a third approach through symmetrization techniques applied to the equivalent Gagliardo seminorm (see~\eqref{gagliardo} in Section~\ref{sec_fractional}) can be found in \cite{FS}.
\vspace{2mm}

A naive approach to the validity of \eqref{eq_sobolev} is to study the variational problem
\begin{eqnarray}\label{pbsobolev}
\dys S^{\ast} \! := \!\sup\left\{ F(u) :  u\in \dot{H}^s(\mathds{R}^N), \, \int_{\R^N} |(-\Delta)^{\frac{s}{2}} u|^{2} dx \leq 1 \right\}
\end{eqnarray}
\begin{eqnarray*}
\dys \text{where} \ \ F (u)\!\! := \!\!\int_{\mathds{R}^N} |u|^{2^{\ast}\!} dx. 
\end{eqnarray*}

Clearly, the validity of \eqref{eq_sobolev} is equivalent to show that the constant $S^{\ast}$ defined in~\eqref{pbsobolev} is finite, and an explicit formula of $S^\ast$ is given by~\eqref{def_sstar}. Moreover, \eqref{def_talentiana}~gives the maximizers of the variational problem \eqref{pbsobolev} up to normalization. Note that even the existence of a maximizer is not trivial since the embedding \eqref{eq_sobolev} is not compact, because of translation and dilation invariance. 
Indeed, if $u\in \dot{H}^s(\mathds{R}^N)$ is an admissible function in \eqref{pbsobolev}, the same holds for 
\begin{equation}\label{def_funzioni}
\dys 
u_{x_0,\lambda}(x)= \lambda^{-\frac{N-2s}{2}} u\left(\frac{ x-x_0}{\lambda} \right),
\end{equation}
for any $x_0\in \mathds{R}^N$ and any $\lambda>0$; in addition $u_{x_0,\lambda}$ satisfies $F(u_{x_0, \lambda})=F(u)$ and tends to zero weakly in $\dot{H}^s$, as $|x_0|\to \infty$ or as $\lambda\to 0^+$ and $\lambda\to \infty$. More precisely, formula \eqref{def_funzioni} and simple calculations show that both the $\dot{H}^s$ and the $L^{2^\ast}$ \!norms are invariant under translations and dilations and through \eqref{def_funzioni} such groups (actually their semidirect product) act by isometries on $\dot{H}^s$ and $L^{2^\ast}$ in a noncompact way.

Another related problem we consider is the following. Given a bounded domain $\Omega \subset \mathds{R}^N$, one can define the Sobolev space $\dot{H}^s(\Omega)$ as the closure of $C^\infty_0(\Omega)$ in $\dot{H}^s(\mathds{R}^N)$ with the norm in \eqref{Hs0-norm} and the related maximization problem (corresponding to Sobolev embedding $\dot{H}^s(\Omega) \hookrightarrow L^{2^\ast}(\Omega)$), namely
\begin{eqnarray}\label{pbsobolev2}
\dys S^{\ast}_\Omega \! := \!\sup\left\{ F_\Omega(u) : u\in \dot{H}^s(\Omega), \, \int_{\R^N} |(-\Delta)^{\frac{s}{2}} u|^{2} dx \leq 1 \right\}
\end{eqnarray}
\begin{eqnarray}\label{funzionale2}
\dys \text{where} \ \ F_\Omega (u)\!\! := \!\!\int_{\Omega} |u|^{2^{\ast}\!} dx.  
\end{eqnarray}

A simple scaling argument on compactly supported smooth functions shows that $S^{\ast}_\Omega=S^\ast$, but in view of~\eqref{def_talentiana}, the variational problem \eqref{pbsobolev2} has no maximizer and no maximizing sequence in \eqref{pbsobolev2} converges.

Part of our analyses here will be devoted to the study of the effects of such lack of compactness for optimizing sequences in \eqref{pbsobolev}-\eqref{pbsobolev2}. Also, in analogy with the the local case $s \in \mathds{N}$, we will describe such phenomena in terms of 
concentration-compactness and more generally in terms of the so-called profile decomposition as originally done in \cite{Ger98} for fractional spaces $ \dot{H}^s(\mathds{R}^N)$ but with a different approach.


In order to do this, among other tools we will need to make use of a refinement of the Sobolev inequality \eqref{eq_sobolev} itself, in terms of Morrey spaces. By a refinement of the Sobolev embedding, one means that there exists a Banach function space $X$ such that $\dot{H}^s\hookrightarrow X$ continuously (possibly $ \dot{H}^s\hookrightarrow L^{2^\ast} \hookrightarrow X$ ) and, for some $0< \theta < 1$ and some $C>0$,
\begin{equation}
\label{refsobolev}
\|u \|_{L^{2^\ast}} \, \leq \, C \|u\|^{\theta}_{\dot{H}^s} \|u\|_{X}^{1-\theta}, \quad \forall u\in \dot{H}^s(\R^N) \, .
\end{equation}
Clearly \eqref{refsobolev} implies \eqref{eq_sobolev} because the embedding $\dot{H}^s \hookrightarrow X$ is continuous.

Such improved Sobolev inequalities are difficult to get  but, as we will discuss below, they allow to obtain deeper informations that would be not detected in the Lebesgue scale.
The simplest choice is probably the Lorentz space $X=L(2^\ast,\infty) $, (so that $L^{2^\ast} \hookrightarrow X$), for which \eqref{refsobolev} can be proved combining Peetre's Sobolev embedding $\dot{H}^s \hookrightarrow L(2^\ast,2)$ with H\"older inequality in Lorentz spaces (see e.~\!g.~\cite{FS}) and for which the convexity exponent is $\theta=2/2^\ast$. 

In the same direction, Gerard, Meyer \& Oru (\cite{GMO97}) proved \eqref{refsobolev} when $X=  \dot{B}_{\infty,\infty}^{-N/2^\ast}$ is an homogeneous Besov space of negative smoothness and still $\theta=2/2^\ast$ and $L^{2^\ast} \hookrightarrow X$. Thus, in both cases the refinement \eqref{refsobolev} appears as an interpolation inequality between $\dot{H}^s$ and $X$ with intermediate space $L^{2^\ast}$. 
This is not the case in~\cite{Ger98}, pages~224-225, where using a result from \cite{GMO97} the inequality \eqref{refsobolev} is established for $X=\dot{B}^s_{2,\infty}$, a Besov space of positive smoothness very close to $\dot{H}^s$ (so that one only has $\dot{H}^s \hookrightarrow X \hookrightarrow L(2^\ast,\infty)$ but $X \not \hookrightarrow L^{2^\ast}$). We will back to this case below, when discussing the profile decomposition in $\dot{H}^s$.  

\vspace{2mm}
Now, for any $1\leq r<\infty$ and any $0\leq \gamma \leq N$ let denote by $\mathcal{L}^{r,\gamma}$ the usual homogeneous Morrey space. For $1\leq r < 2^\ast$ we let $\gamma =r{(N-2s)}/{2}$  (so that $0<\gamma< N$) and consider $X= \mathcal{L}^{r,{r(N-2s)}/{2}}$.  Note that with this choice of the parameters the norm of $X$ has the same invariance property of the $\dot{H}^s$ norm under~\eqref{def_funzioni} (see equation \eqref{def_morrey}  below). Our first result is the following

\begin{thm}
\label{thm_morrey1}
For any $0<s<N/2$ there exists a constant $C$ depending only on $N$ and $s$ such that, for   any $\dys \, {2}/{2^\ast}\! \leq \theta < 1$ and for any $1\leq r < 2^\ast$,
\begin{equation}
\label{eq_improved}
\dys
\|u \|_{L^{2^\ast}} \, \leq \, C \|  u\|_{\dot{H}^s}^{\theta} \|u\|_{\mathcal{L}^{r,r{(N-2s)}/{2}}}^{1-\theta}
 \quad \forall u\in \dot{H}^s(\R^N) \, .
 \end{equation}
\end{thm}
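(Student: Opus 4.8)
The plan is to obtain the full statement from its extremal case $\theta=2/2^\ast$, $r=1$, and to prove that case by a Littlewood--Paley argument. The reductions are elementary. Two uses of H\"older's inequality on balls, both exploiting $N/2^\ast=(N-2s)/2$, give on the one hand the nesting $\mathcal{L}^{r,r(N-2s)/2}\hookrightarrow\mathcal{L}^{1,(N-2s)/2}$ for every $r\ge1$, and on the other hand $\|u\|_{\mathcal{L}^{r,r(N-2s)/2}}\le C\|u\|_{L^{2^\ast}}$ (all these Morrey norms share the dilation weight of $\|\cdot\|_{\dot H^s}$ under \eqref{def_funzioni}; see \eqref{def_morrey}). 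Moreover the extremal inequality $\|u\|_{L^{2^\ast}}\le C\|u\|_{\dot H^s}^{2/2^\ast}\|u\|_{\mathcal{L}^{1,(N-2s)/2}}^{1-2/2^\ast}$, combined with $\|u\|_{\mathcal{L}^{1,(N-2s)/2}}\le C\|u\|_{L^{2^\ast}}$, already yields the plain Sobolev inequality \eqref{eq_sobolev}, hence $\|u\|_{\mathcal{L}^{r,\gamma}}\le C\|u\|_{\dot H^s}$; using this to absorb the excess power, \eqref{eq_improved} for a general $\theta\in[2/2^\ast,1)$ follows by writing $\|u\|_{\mathcal{L}^{r,\gamma}}^{1-2/2^\ast}=\|u\|_{\mathcal{L}^{r,\gamma}}^{1-\theta}\,\|u\|_{\mathcal{L}^{r,\gamma}}^{\theta-2/2^\ast}$. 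So it remains to establish the extremal case.

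The heart of the argument is the pointwise Littlewood--Paley estimate
\[
\|\Delta_j u\|_{L^\infty}\le C\,2^{j(N-2s)/2}\,\|u\|_{\mathcal{L}^{1,(N-2s)/2}},\qquad j\in\Z .
\]
To prove it I write $\Delta_j u=\psi_j\ast u$ with $\psi_j(x)=2^{jN}\check\psi(2^jx)$, $\check\psi\in\mathcal S(\R^N)$, so that $|\psi_j(x-y)|\le C_M\,2^{jN}(1+2^j|x-y|)^{-M}$ for every $M$, and I split the convolution integral over the ball $B_{2^{-j}}(x)$ and the dyadic shells $\{2^{k-1}2^{-j}\le|x-y|<2^k2^{-j}\}$, $k\ge1$. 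On the $k$-th piece the kernel is $\lesssim 2^{jN}2^{-Mk}$, while the Morrey inequality gives $\int_{B_{2^k2^{-j}}(x)}|u|\le C\,(2^k2^{-j})^{(N+2s)/2}\|u\|_{\mathcal{L}^{1,(N-2s)/2}}$ (the exponent being $N-\gamma=(N+2s)/2$). Summing the geometric series in $k$ for $M$ large gives the estimate; equivalently, since $(N-2s)/2=N/2^\ast$, this is the continuous embedding $\mathcal{L}^{1,(N-2s)/2}\hookrightarrow\dot B^{-N/2^\ast}_{\infty,\infty}$.

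From here, the passage to the extremal inequality is the classical level-set argument behind the G\'erard--Meyer--Oru refinement \cite{GMO97}. By the layer-cake formula $\|u\|_{L^{2^\ast}}^{2^\ast}=2^\ast\int_0^\infty\lambda^{2^\ast-1}|\{|u|>\lambda\}|\,d\lambda$, and for each $\lambda>0$ I split $u=\sum_{j<J(\lambda)}\Delta_ju+\sum_{j\ge J(\lambda)}\Delta_ju=:u^{lo}_\lambda+u^{hi}_\lambda$, with the integer $J(\lambda)$ chosen, via the previous step and a geometric summation, so that $\|u^{lo}_\lambda\|_{L^\infty}\le C\,2^{J(\lambda)(N-2s)/2}\|u\|_{\mathcal{L}^{1,(N-2s)/2}}\le\lambda/2$. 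Then $\{|u|>\lambda\}\subset\{|u^{hi}_\lambda|>\lambda/2\}$, and Chebyshev's inequality together with the almost-orthogonality of the blocks yields $|\{|u|>\lambda\}|\le C\lambda^{-2}\sum_{j\ge J(\lambda)}\|\Delta_ju\|_{L^2}^2$. Inserting this and exchanging the $\lambda$-integral with the sum over $j$ — the resulting inner integral $\int_0^{c\|u\|_{\mathcal{L}^{1,(N-2s)/2}}2^{jN/2^\ast}}\lambda^{2^\ast-3}\,d\lambda$ is finite precisely because $2^\ast>2$ — gives $\|u\|_{L^{2^\ast}}^{2^\ast}\le C\,\|u\|_{\mathcal{L}^{1,(N-2s)/2}}^{2^\ast-2}\sum_j2^{2js}\|\Delta_ju\|_{L^2}^2\le C\,\|u\|_{\mathcal{L}^{1,(N-2s)/2}}^{2^\ast-2}\|u\|_{\dot H^s}^2$, where I used $N(2^\ast-2)/2^\ast=2s$; taking $2^\ast$-th roots concludes.

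No individual step is hard; the real difficulty is the bookkeeping, which must be arranged so that every exponent respects the common dilation invariance of \eqref{def_funzioni}. In particular one cannot simply interpolate each block $\Delta_ju$ between $L^2$ and $L^\infty$ and then sum, because that would trade the $\ell^2(\Z)$-summability in $j$ carried by $\|u\|_{\dot H^s}$ for a weaker $\ell^p$ one — this is exactly why the real-interpolation (level-set) packaging in the third step is needed instead of a termwise H\"older estimate. A minor, routine point is to justify the distributional Littlewood--Paley decomposition and the layer-cake identity, which one does by first arguing for $u\in C_0^\infty(\R^N)$ and passing to the limit by density.
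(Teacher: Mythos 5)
Your proof is correct, and structurally it coincides with the second of the two proofs given in the paper: reduce to $r=1$ and $\theta=2/2^\ast$ by H\"older on balls and the chain $\dot H^s\hookrightarrow L^{2^\ast}\hookrightarrow \mathcal{L}^{r,r(N-2s)/2}\hookrightarrow\mathcal{L}^{1,(N-2s)/2}$, then pass through the Besov space $\dot B^{-N/2^\ast}_{\infty,\infty}$. The differences are two. First, for the embedding $\mathcal{L}^{1,(N-2s)/2}\hookrightarrow\dot B^{-N/2^\ast}_{\infty,\infty}$ (Lemma~\ref{lem_10} in the paper) the paper uses the thermic description of the Besov norm, decomposing the convolution with the Gaussian over dyadic shells; your block estimate $\|\Delta_j u\|_{L^\infty}\leq C\,2^{jN/2^\ast}\|u\|_{\mathcal{L}^{1,(N-2s)/2}}$ is the same computation carried out on the Littlewood--Paley kernel instead of the heat kernel, so this step is essentially identical in substance. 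Second --- and this is the genuine divergence --- the paper takes the refined inequality $\|u\|_{L^{2^\ast}}\leq C\|u\|_{\dot H^s}^{2/2^\ast}\|u\|_{\dot B^{-N/2^\ast}_{\infty,\infty}}^{1-2/2^\ast}$ as a black box from \cite{GMO97}, whereas you reprove it (with the Morrey norm substituted directly) by the layer-cake argument: choosing the cutoff $J(\lambda)$ so that the low-frequency part never reaches level $\lambda/2$ and estimating the high-frequency part by Chebyshev and almost-orthogonality. This makes your proof self-contained where the paper's is not; the exponent bookkeeping ($N-\gamma=(N+2s)/2$ on the shells, $N(2^\ast-2)/2^\ast=2s$ in the final sum) checks out, and your observation that a termwise interpolation of the blocks would degrade the $\ell^2$ summability in $j$ correctly identifies why the level-set packaging is needed. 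The paper also offers an entirely different first proof via the Sawyer--Wheeden weighted estimates for Riesz potentials \cite{SW92}, which you do not touch; that route, however, only covers $\theta>\max\{2/2^\ast,\,1-1/2^\ast\}$ unless $N$ is large, whereas your argument, like the paper's Besov-based proof, yields the full range $\theta\in[2/2^\ast,1)$.
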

Note that a simple application of H\"older inequality gives $L^{2^\ast} \hookrightarrow \mathcal{L}^{r, \frac{N-2s}{2}r}=X$,  i.~\!e., there exists a constant $C=C(n,s)$ 
such that
\begin{equation}\label{eq_6star}
\dys
\|u\|_{\mathcal{L}^{r, r\frac{N-2s}{2}}} \, \leq \, C \| u\|_{L^{2^\ast}}, \quad \forall u \in L^{2^\ast} \, ,
\end{equation}
thus \eqref{eq_improved} is a refinement of \eqref{eq_sobolev} in the sense discussed above.
  Note moreover that H\"older inequality also gives $\dys \mathcal{L}^{p, p\frac{N-2s}{2}} \hookrightarrow \mathcal{L}^{r, r\frac{N-2s}{2}}, $
for any $1 \leq r < p < 2^\ast$, so that it will be enough to prove the theorem in the case $r=1$.

We propose two different proofs of Theorem~\ref{thm_morrey1}.
 The first one  relies basically on 
a subtle estimate of the Riesz potentials on weighted $L^p$ spaces established in~\cite{SW92}, using Calder\'on-Zygmund type techniques much in the spirit of the fundamental Fefferman-Phong inequality. Combining this estimate with a precise control on $A_{p,q}$-constant associated to the weights in terms of the Morrey norm the theorem follows. 
 
The second proof we give is completely different and combines the refined Sobolev embedding of \cite{GMO97} in $\dot{B}^{-N/2^\ast}_{\infty,\infty}$ with an embedding of Morrey spaces into homogeneous Besov spaces. Indeed, by means of the thermic description of the Besov spaces $\dot{B}^{-\alpha}_{\infty, \infty}$ (see \cite[Chapter 5]{Lem02}, and Section~\ref{sec_besov} below), we will check that 
\begin{equation}\label{bmstar}
\dys 
\mathcal{L}^{1,\alpha} \hookrightarrow \dot{B}_{\infty,\infty}^{-\alpha} ,
\end{equation}
for any $0<\alpha<N$, and for $\alpha ={(N-2s)}/{2}$ this is all we need to conclude.

The latter embedding is somehow implicit in \cite{Tay92} and presumably well-known in the Navier-Stokes community (at least for nonhomogeneous spaces or for $\alpha=1$, for which the space has the same scale-invariance of the equations). Since we were not able to find a precise reference to the literature, we provide below an elementary proof (see~Lemma~\ref{lem_10}).

\vspace{1mm}

To summarize, for $0<2s<N$ and $1\leq r<2^\ast$ we have the chain of inclusions  
$$
\dot{B}^s_{2,\infty} \, \hookrightarrow  \, L(2^\ast, \infty) \,   \hookrightarrow   \, \mathcal{L}^{r, r\frac{N-2s}{2}} \,  \hookrightarrow  \, \dot{B}^{-N/2^\ast}_{\infty,\infty}  \, ,
$$
and the refined Sobolev inequality \eqref{refsobolev} holds when $X$ is any of these spaces, due to Theorem \ref{thm_morrey1} and all the previous results recalled above. 

Actually the same strategy above applies to the Sobolev embedding $\dot{W}^{1,p}(\mathds{R}^N) \hookrightarrow L^{p^\ast}(\mathds{R}^N)$, $1\leq p<N$, where $\dot{W}^{1,p}$ is the closure of $C^\infty_0(\mathds{R}^N)$ with respect to the $L^p$-norm of the gradient. To be precise, we will prove the following
\begin{thm}\label{thm_morrey2}
For any $1\leq p<N$ let  $p^\ast$ be the usual critical exponent given by $Np/(N-p)$. There exists a constant $C$ depending only on $N$ and $p$ such that, for any $\, p/p^\ast \leq \theta < 1$ and for any $1\leq r< p^\ast$,
\begin{equation*}
\dys
\|u \|_{L^{p^\ast}} \leq \, C \| \nabla u\|_{L^p}^{\theta} \|u\|_{\mathcal{L}^{r,r{(N-p)}/{p}}}^{1-\theta} 
 \quad \forall u\in \dot{W}^{1,p}(\R^N) \, .
 \end{equation*}
 \end{thm}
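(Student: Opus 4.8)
The plan is to mimic, for the first-order space $\dot{W}^{1,p}$, exactly the two-step strategy used for Theorem~\ref{thm_morrey1}, reducing everything to the case $r=1$ by the same H\"older chain $\mathcal{L}^{q,q(N-p)/p}\hookrightarrow\mathcal{L}^{1,(N-p)/p}$ for $1\le q<p^\ast$, and checking first that the exponents are scale-consistent: under the dilation $u\mapsto\lambda^{-(N-p)/p}u(\cdot/\lambda)$ both $\|\nabla u\|_{L^p}$ and $\|u\|_{\mathcal{L}^{1,(N-p)/p}}$ are invariant, which forces $\theta=p/p^\ast$ as the critical convexity exponent and gives the inequality for larger $\theta$ by interpolating with \eqref{eq_sobolev}-type control (or rather its $\dot{W}^{1,p}$ analogue, Sobolev's inequality $\|u\|_{L^{p^\ast}}\le C\|\nabla u\|_{L^p}$) together with the embedding $L^{p^\ast}\hookrightarrow\mathcal{L}^{1,(N-p)/p}$.

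For the core estimate there are two routes, parallel to the two proofs of Theorem~\ref{thm_morrey1}. The \emph{Besov route} is the cleaner one to write down: Gerard, Meyer \& Oru's refined inequality in \cite{GMO97} is stated for the whole Sobolev scale, so in particular one has $\|u\|_{L^{p^\ast}}\le C\|\nabla u\|_{L^p}^{p/p^\ast}\|u\|_{\dot{B}^{-N/p^\ast}_{\infty,\infty}}^{1-p/p^\ast}$; then one only needs the Morrey-into-Besov embedding $\mathcal{L}^{1,\alpha}\hookrightarrow\dot{B}^{-\alpha}_{\infty,\infty}$ with $\alpha=(N-p)/p$, which is precisely \eqref{bmstar} (Lemma~\ref{lem_10}) and holds for every $0<\alpha<N$, hence for this value of $\alpha$ as well. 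Composing the two inequalities yields the theorem. The \emph{potential-theoretic route} instead writes $u=I_1\ast(-\nabla u/|\nabla u|\cdot\ )$-type representation, i.e. controls $u$ pointwise by the Riesz potential $I_1(|\nabla u|)$, and then applies the weighted $L^p$ bound for Riesz potentials of \cite{SW92} together with the $A_{p,q}$-constant control by the Morrey norm — the only change from the fractional case is that the potential $I_1$ and the integrability exponent $p$ replace $I_s$ and $2$, and the bookkeeping of exponents is identical.

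I would present the Besov route as the main proof because it is essentially a one-line composition once the ingredients are in place, and all the ingredients are already established in the excerpt (the \cite{GMO97} refinement and Lemma~\ref{lem_10}); I would at most remark that the potential-theoretic proof goes through verbatim. Concretely: (i) reduce to $r=1$; (ii) for $\theta=p/p^\ast$, invoke $\|u\|_{L^{p^\ast}}\le C\|\nabla u\|_{L^p}^{p/p^\ast}\|u\|_{\dot{B}^{-N/p^\ast}_{\infty,\infty}}^{1-p/p^\ast}$ from \cite{GMO97}; (iii) bound $\|u\|_{\dot{B}^{-N/p^\ast}_{\infty,\infty}}=\|u\|_{\dot{B}^{-(N-p)/p}_{\infty,\infty}}\le C\|u\|_{\mathcal{L}^{1,(N-p)/p}}$ via \eqref{bmstar}; (iv) for $p/p^\ast<\theta<1$, write $\theta=\lambda\cdot(p/p^\ast)+(1-\lambda)\cdot 1$ and interpolate the case just proved with the plain Sobolev inequality $\|u\|_{L^{p^\ast}}\le C\|\nabla u\|_{L^p}$, using $L^{p^\ast}\hookrightarrow\mathcal{L}^{1,(N-p)/p}$ to absorb the extra Morrey factor.

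The main obstacle — really the only non-formal point — is verifying that the quoted refinement of \cite{GMO97} genuinely holds for $\dot{W}^{1,p}$ with $p\neq 2$ and with the negative-Besov norm on the right with the scale-correct index $-N/p^\ast$; the excerpt only explicitly invokes the $\dot{H}^s$ instance. I expect this to be a known and routine extension (the thermic/Littlewood-Paley proof of the refined inequality does not use the Hilbert structure), but it is the step I would double-check and cite carefully. Everything else — the exponent arithmetic, the reduction to $r=1$, the use of Lemma~\ref{lem_10}, and the interpolation for larger $\theta$ — is bookkeeping identical to the $\dot{H}^s$ case already carried out for Theorem~\ref{thm_morrey1}.
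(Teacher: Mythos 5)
Your proposal matches the paper's argument: the paper likewise gives both the Riesz-potential proof (via the pointwise bound $|u|\le c\,\mathcal{I}_1(|\nabla u|)$ and the Sawyer--Wheeden weighted estimate, which is why the paper there assumes $p>1$ and gets a restricted $\theta$-range) and, as the cleaner alternative, the Besov-route proof combining a refined Sobolev inequality in $\dot{B}^{-(N-p)/p}_{\infty,\infty}$ with Lemma~\ref{lem_10} and \eqref{lem_8}. The only adjustment: the $\dot{W}^{1,p}$, $p\neq 2$ refinement you flagged as the point to double-check is exactly Ledoux's theorem \cite{Led03} (stated there for the full range $p/p^\ast\le\theta<1$, so your extra interpolation step (iv) is not even needed), and this is what the paper invokes in place of \cite{GMO97}.
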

As in the nonlocal case given in Theorem~\ref{thm_morrey1} --  the previous result contains an improvement of the usual Sobolev embedding $\dot{W}^{1,p} \hookrightarrow L^{p^\ast}$ in terms of Morrey spaces. The previous refinement is similar to the one in \cite{Led03}, where a stronger improvement is established analogous to \eqref{refsobolev} using the Besov space of negative smoothness $X=\dot{B}^{(p-N)/p}_{\infty,\infty}$. As for Theorem \ref{thm_morrey1} we give two proofs of this result, again either by weighted estimates of the Riesz potentials or combining the refined Sobolev inequality from \cite{Led03} with the embedding \eqref{bmstar} with $\alpha=(N-p)/p$.
\vspace{2mm}

Armed with the improved Sobolev embeddings in Theorem~\ref{thm_morrey1}, we will prove that
for any sequence $\{u_n\}$ in $\dot{H}^s$ uniformly bounded from below in the Lebesgue $L^{2^\ast}$-norm one can detect an appropriate scaling $\{x_n, \lambda_n\}$ which assures that the sequence~$u_{x_n,\lambda_n}$ given by~\eqref{def_funzioni} admits a nontrivial weak-limit (see Lemma~\ref{lem_nontrivial}). Combining this fact with a celebrated lemma from \cite{brezislieb}, we can study maximizing sequences for~\eqref{pbsobolev}.  As for the case when $s$ is an integer, first considered in \cite{lions}-\cite{lions2}, we will prove that the compactness of such sequences is restored when the natural invariance is taken into account. Indeed, we have the following

\begin{thm}
\label{the_optimizseq}
Let  $\{u_n\} \subset \dot{H}^s(\mathds{R}^N)$ be a maximizing sequence for the critical Sobolev inequality in the form~{\rm\eqref{pbsobolev}}. Then, up to subsequences, there exist a sequence of points $\{x_n \}\subset \mathds{R}^N$ and a sequence of numbers $\{\lambda_n\} \subset (0,\infty)$ such that $\tilde{u}_n(x)=\lambda_n^{(N-2s)/2}u_n \left(x_n+\lambda_n x\right)$ converges to $u(x)$ as given by \eqref{def_talentiana}, both in $L^{2^*}\!(\mathds{R}^N)$ and in $\dot{H}^s(\mathds{R}^N)$ as $n \to \infty$.
\end{thm}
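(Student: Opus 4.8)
The plan is to use the improved Sobolev inequality of Theorem~\ref{thm_morrey1} to pick out a concentration scale along which the maximizing sequence acquires a nontrivial weak limit, and then to run the Brezis--Lieb splitting together with a strict--concavity (superadditivity) argument, exactly in the spirit of Lions's concentration-compactness for $s\in\mathbb{N}$.

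First, since $F(u_n)\to S^\ast>0$ while $\|u_n\|_{\dot H^s}\le 1$, the sequence $\{u_n\}$ is bounded below in $L^{2^\ast}$. Hence Lemma~\ref{lem_nontrivial} applies and yields, after passing to a subsequence, points $x_n\in\mathbb{R}^N$ and scales $\lambda_n>0$ such that the rescaled functions $\tilde u_n$, which are of the form \eqref{def_funzioni} applied to $u_n$, satisfy $\tilde u_n\rightharpoonup u$ weakly in $\dot H^s$ with $u\neq 0$. Since both the $\dot H^s$ and the $L^{2^\ast}$ norms are invariant under \eqref{def_funzioni}, the functions $\tilde u_n$ remain admissible and maximizing in \eqref{pbsobolev}: $\|\tilde u_n\|_{\dot H^s}\le 1$ and $F(\tilde u_n)=F(u_n)\to S^\ast$.

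Write now $\tilde u_n=u+v_n$, so that $v_n\rightharpoonup 0$ in $\dot H^s$. The Hilbert structure of $\dot H^s$ gives $\|\tilde u_n\|_{\dot H^s}^2=\|u\|_{\dot H^s}^2+\|v_n\|_{\dot H^s}^2+o(1)$, while the Brezis--Lieb lemma \cite{brezislieb} gives $F(\tilde u_n)=F(u)+F(v_n)+o(1)$. Put $a:=\|u\|_{\dot H^s}^2>0$ and $b_n:=\|v_n\|_{\dot H^s}^2$, so $a+b_n\le 1+o(1)$, and apply the Sobolev inequality \eqref{eq_sobolev} to $u$ and to $v_n$ separately to get
\[
S^\ast+o(1)=F(\tilde u_n)=F(u)+F(v_n)+o(1)\le S^\ast\bigl(a^{2^\ast/2}+b_n^{2^\ast/2}\bigr)+o(1),
\]
hence $a^{2^\ast/2}+b_n^{2^\ast/2}\ge 1-o(1)$. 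On the other hand, since $2^\ast/2>1$ the function $t\mapsto t^{2^\ast/2}$ is superadditive on $[0,\infty)$, so $a^{2^\ast/2}+b_n^{2^\ast/2}\le(a+b_n)^{2^\ast/2}\le 1+o(1)$, with equality in the first inequality only when $a\,b_n=0$. Passing to a further subsequence with $b_n\to b\ge 0$, these bounds force $a+b=1$ and $ab=0$; as $a>0$ we conclude $b=0$, i.e. $\|v_n\|_{\dot H^s}\to 0$ and $\|u\|_{\dot H^s}=1$. Thus $\tilde u_n\to u$ strongly in $\dot H^s$, and applying \eqref{eq_sobolev} to $\tilde u_n-u$ also $\tilde u_n\to u$ in $L^{2^\ast}$. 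In particular $\|u\|_{\dot H^s}=1$ and $F(u)=\lim_n F(\tilde u_n)=S^\ast$, so $u$ is an optimizer for \eqref{pbsobolev}, and by the classification of extremals recorded in \eqref{def_talentiana} it is a Talenti function of the stated form.

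The main obstacle, and the reason Theorem~\ref{thm_morrey1} is established beforehand, lies entirely in the first step: a maximizing sequence for \eqref{pbsobolev} may tend weakly to zero under \emph{every} rescaling \eqref{def_funzioni}, so some quantity strictly finer than the Lebesgue norms is needed to detect where the mass accumulates. The Morrey refinement \eqref{eq_improved} (already with $r=1$) supplies exactly such a quantity, producing a ball on which a fixed fraction of the $L^1$-mass of $u_n$ sits at the correct scale; Lemma~\ref{lem_nontrivial} then transfers this to non-vanishing of the weak limit by means of the local compact embedding $\dot H^s\hookrightarrow L^r_{\mathrm{loc}}$ for $r<2^\ast$. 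Once nontriviality is secured, the remaining argument is the routine Brezis--Lieb dichotomy carried out above.
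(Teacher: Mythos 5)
Your proposal is correct and follows essentially the same route as the paper's proof: Lemma~\ref{lem_nontrivial} (built on the Morrey refinement of Theorem~\ref{thm_morrey1}) to extract a rescaled subsequence with nontrivial weak limit, then the Brezis--Lieb splitting combined with the orthogonal splitting of the $\dot{H}^s$ norm and the strict superadditivity of $t\mapsto t^{2^\ast/2}$ to force the remainder to vanish, and finally the classification \eqref{def_talentiana} of optimizers. The only cosmetic difference is that you phrase the endgame via the equality case $ab=0$ of superadditivity, while the paper writes the same chain of inequalities and observes they must all be equalities; the content is identical.
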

As a consequence of the previous theorem, 
we see that optimizing sequences for the Sobolev inequality \eqref{pbsobolev}, at least asymptotically,  look like optimal functions.
It would be interesting to prove a quantitative version of this fact in analogy with what is done in \cite{BE} for the case $s=1$.

\vspace{2.2mm}

As next step, we informally\footnote{ We defer to \cite[Chapter 3]{TF07}, and to Section \ref{sec_decomposition} below for precise definitions.} review the notion of profile decomposition in general Hilbert spaces and we present an alternative abstract approach, based on \cite{TF07}  and Theorem \ref{thm_morrey1}, to the profile decomposition in $\dot{H}^s(\mathds{R}^N)$ as first proved in \cite{Ger98} by different arguments. Consider a bounded sequence $\{ u_n \} \subset H$ weakly converging to some $u \in H$, where $H$ is a given separable Hilbert space (in our case, $H=\dot{H}^s$, $0<s<N/2$) and let $u=0$ without loss of generality. We assume that some noncompact group $G$ acts by unitary operators on $H$ (in our case $G=\mathds{R}^N  \rtimes (0,\infty)$ acts by traslations and dilations according to \eqref{def_funzioni}), where the elements of the group~$G$ (possibly just a set) are sometimes called ``dislocations''.

 One can define the set of all {\em profiles} associated to $\{u_n\}\subset H$ as the set of all possible nonzero weak limits 
\begin{equation}
\label{defprofiles}
\bold{\Psi}=\{\psi_j \in H\setminus \{0\} , \, j \in I;  \, \psi_j= w- \lim_{n\to \infty} \, \left( g^{(j)}_n \right)^* u_n \, , \,  g_n^{(j)} \in  G  \}\,,
\end{equation}      
where $I$ is the (at most countable) index set for the profiles, and for each $j \in I$ the sequence $\{ g^{(j)}_n\} \subset G$ is going to infinity on the group. 
In case such set is not empty, one can try to subtract-off from $u_n$ all the profiles (scaled-back in the opposite way they where constructed) and analyze the asymptotic properties of the reminders.
More precisely, one writes

$$
u_n=\sum_{j \in I} g^{(j)}_n \psi_j +r_n \, ,
$$
where the vectors $\{  \psi_j\}_{j \in I} \subset H$, the dislocations $\{ g^{(j)}_n\}\subset G$  and the reminders $\{ r_n\} \subset H$ are characterized by three requirements: (i) the profiles $\psi_j$ are nonzero (nontriviality),
 (ii) for different $j \in I$ the corresponding dislocations are different as $n \to \infty$ (asymptotic orthogonality), 
 (iii) the sequence of reminders $r_n$ contains non further profiles as $n \to \infty$ ($G$-weak convergence to zero).

According to \cite[Theorem 3.1]{TF07}, such abstract decomposition is always possible and essentially unique (see also \cite{Tao10} for another proof). 
Thus, one can think of the investigations of profile decompositions as an attempt to capture the main features of the Banach-Alaoglu theorem in the extended setting of a group $G$ acting on $H$. Moreover, the presence of profiles is an obvious obstruction to compactness for the embedding of $H$ in any Banach space $Y$ on which the group $G$ acts also by isometries in an equivariant way (e.~\!g.  when $H=\dot{H}^s$ we can take $Y=L^{2^{\ast}}$ in view of~\eqref{eq_sobolev} and~\eqref{def_funzioni}). For discussing the relevance of this theory with references, proofs and some explicit applications, we refer the interested reader to the notes by Tao in~\cite{Tao08}.

Since such an abstract decomposition is available, the difficulty in applying it to $\dot{H}^s$ is just to characterize property (ii) and (iii) in a concrete way, and proving (iii) is precisely the point where improved Sobolev embeddings \eqref{eq_improved} enter. More precisely, using Theorem \ref{thm_morrey1} it is very easy to  show that $G$-weak convergence to zero is equivalent to strong convergence to zero in $L^{2^\ast}$.

Thus, combining Theorem 3.1 and Corollary 3.2 from \cite{TF07} with Theorem \ref{thm_morrey1}, we recover the following result by Gerard (see \cite{Ger98}, Theorem 1.1).

\begin{thm}\label{thm_decomposition}
Let $\{ u_n\}$ be a bounded sequence in $\dot{H}^s(\R^N)$. Then, there exist a {\rm(}at most countable{\,\rm)} set $I$, a family of profiles $\{\psi_j\}\subset \dot{H}^s(\R^N)$, a family of points $\{x^{(j)}_n \}\in \R^N$ and a family of numbers $\{ \lambda_n^{(j)}\}\subset(0,\infty)$, such that, for a renumbered subsequence of $\{ u_n\}$, we have
\begin{equation*}
\dys
\left| \log{\left(\frac{\lambda^{(i)}_n}{\lambda^{(j)}_n}\right)} \right| + \left|\frac{(x^{(i)}_n - x^{(j)}_n)}{\lambda^{(i)}_n}\right| \,
 \underset{n\to\infty}\longrightarrow \infty \ \ \text{for} \ i \neq j,
\end{equation*}
\begin{equation}\label{decomposition}
\dys
u_n(x) = \sum_{j\in I} {\lambda^{(j)}_n}^{\frac{2s-N}{2}} \psi_j\left(\frac{x-x^{(j)}_n}{\lambda_n^{(j)}}\right) + r_n(x),
\end{equation}
where
$\dys \,  \lim_{n\to \infty} \|r_n\|_{L^{2^\ast}} = 0$,\vspace{1mm} 
\begin{equation*}
\dys
\hspace{-2.8cm} \text{and} 
\qquad\qquad\quad \ \ \|u_n\|^2_{\dot{H}^s} = \sum_{j\in I} \| \psi_j\|^2_{\dot{H}^s} + \|r_n\|^2_{\dot{H}^s} + o(1) 
\ \text{as} \ n \to \infty.
\end{equation*}
\end{thm}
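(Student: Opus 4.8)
\textbf{Proof proposal for Theorem~\ref{thm_decomposition}.}
The plan is to deduce Gerard's decomposition as a concrete instance of the abstract dislocation-space machinery of~\cite{TF07}, using Theorem~\ref{thm_morrey1} as the bridge between the abstract notion of $G$-weak convergence and the analytic notion of strong convergence in $L^{2^\ast}$. First I would set the stage: take $H = \dot{H}^s(\R^N)$ with inner product induced by~\eqref{Hs0-norm}, and let $G = \R^N \rtimes (0,\infty)$ act on $H$ by the unitary operators $u \mapsto g_{x_0,\lambda}u := u_{x_0,\lambda}$ as in~\eqref{def_funzioni}; one checks that this is a unitary action (the $\dot{H}^s$-norm is dilation/translation invariant) and that $G$ is a group of dislocations in the sense of~\cite[Chapter 3]{TF07}. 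A sequence $g_n = g_{x_n,\lambda_n}$ ``goes to infinity on the group'' precisely when $\left|\log\lambda_n\right| + \left|x_n\right|$ (suitably normalized) is unbounded, which is exactly the statement that $g_n^* \rightharpoonup 0$ as operators; this matches the asymptotic-orthogonality condition that appears in the statement once one compares two dislocations $g_n^{(i)}$ and $g_n^{(j)}$.

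The heart of the argument is the verification that the abstract remainder term $r_n$, which by~\cite[Theorem~3.1]{TF07} satisfies $G$-weak convergence to zero (no further profiles can be extracted), in fact converges strongly to zero in $L^{2^\ast}(\R^N)$. Here I would invoke Theorem~\ref{thm_morrey1}: since $\{r_n\}$ is bounded in $\dot{H}^s$, the refined inequality~\eqref{eq_improved} with, say, $\theta = 2/2^\ast$ and $r=1$ gives
\begin{equation*}
\|r_n\|_{L^{2^\ast}} \leq C\,\|r_n\|_{\dot{H}^s}^{\theta}\,\|r_n\|_{\mathcal{L}^{1,(N-2s)/2}}^{1-\theta} \leq C'\,\|r_n\|_{\mathcal{L}^{1,(N-2s)/2}}^{1-\theta},
\end{equation*}
so it suffices to show the Morrey norm of $r_n$ tends to zero. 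The Morrey norm $\|r_n\|_{\mathcal{L}^{1,(N-2s)/2}}$ is, up to constants, the supremum over balls $B_\rho(y)$ of $\rho^{-(N-(N-2s)/2)}\int_{B_\rho(y)}|r_n|$, and this quantity is comparable to a supremum of the moduli of pairings $\langle r_n, \varphi_{y,\rho}\rangle$ against a $G$-orbit of a fixed bump-type test function (this is precisely the kind of pairing that controls $G$-weak convergence, since applying a dislocation $g_{y,\rho}$ to a normalized profile and testing against it reproduces exactly such a rescaled average). Hence $\|r_n\|_{\mathcal{L}^{1,(N-2s)/2}} \to 0$ is equivalent to $g_n^* r_n \rightharpoonup 0$ for every sequence $g_n \in G$, which is the $G$-weak convergence supplied by the abstract theorem. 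Chaining these equivalences yields $\|r_n\|_{L^{2^\ast}} \to 0$.

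With the strong $L^{2^\ast}$-convergence of the remainder in hand, the rest is bookkeeping: the abstract decomposition $u_n = \sum_{j\in I} g_n^{(j)}\psi_j + r_n$ from~\cite[Theorem~3.1]{TF07} is rewritten in concrete form by unwinding $g_n^{(j)} = g_{x_n^{(j)},\lambda_n^{(j)}}$, which gives precisely~\eqref{decomposition} with the claimed $\R^N$- and $(0,\infty)$-valued parameters; the asymptotic-orthogonality condition on pairs $(i,j)$ translates into the stated divergence of $\left|\log(\lambda_n^{(i)}/\lambda_n^{(j)})\right| + \left|(x_n^{(i)}-x_n^{(j)})/\lambda_n^{(i)}\right|$; and the energy-splitting identity $\|u_n\|_{\dot{H}^s}^2 = \sum_{j\in I}\|\psi_j\|_{\dot{H}^s}^2 + \|r_n\|_{\dot{H}^s}^2 + o(1)$ is exactly the Pythagorean/orthogonality conclusion of~\cite[Corollary~3.2]{TF07} (the asymptotic orthogonality of the dislocated profiles in the Hilbert-space sense, together with the fact that $r_n$ is asymptotically orthogonal to each $g_n^{(j)}\psi_j$). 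The summability $\sum_j \|\psi_j\|_{\dot{H}^s}^2 \leq \limsup \|u_n\|_{\dot{H}^s}^2 < \infty$ also forces $I$ to be at most countable. The main obstacle, and the only place where real analysis rather than soft arguments is needed, is the equivalence in the previous paragraph between smallness of the Morrey norm and $G$-weak convergence to zero; everything else is a transcription of the abstract framework, but that equivalence is where Theorem~\ref{thm_morrey1} is indispensable and where one must be careful that the class of test functions implicit in the Morrey norm genuinely coincides (up to harmless approximation) with a single $G$-orbit in $\dot H^{-s}$.
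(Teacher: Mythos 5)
Your overall architecture is exactly the paper's: identify $\dot H^s$ with a dislocation space for the group $G=\R^N\rtimes(0,\infty)$, invoke \cite[Theorem 3.1, Corollary 3.2]{TF07} for the abstract decomposition and the Pythagorean expansion, and use Theorem~\ref{thm_morrey1} to upgrade the abstract ``$G$-weak convergence to zero'' of the remainder to strong $L^{2^\ast}$ convergence. The bookkeeping (translating dislocations into the scaling parameters, and asymptotic orthogonality into the divergence condition on $(x_n^{(i)},\lambda_n^{(i)})$) is also the paper's route, via its Lemmas~\ref{lemma1} and~\ref{lemma2}.

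The gap is precisely in the step you yourself flag as the only genuinely analytic one. You claim that $\sup_{y,\rho}\rho^{\alpha-N}\int_{B_\rho(y)}|r_n|$ is \emph{comparable} to a supremum of moduli of pairings $\langle r_n,\varphi_{y,\rho}\rangle$ over a $G$-orbit of a fixed bump, and hence that smallness of the Morrey norm is \emph{equivalent} to $G$-weak convergence to zero. Only one direction of that comparison is true: a pairing against a bump is controlled by the local $L^1$ average, but the converse fails because of cancellation --- a highly oscillatory $r_n$ can have $\int_{B_\rho(y)}|r_n|$ of order one while $\int r_n\varphi_{y,\rho}$ is nearly zero for every fixed smooth $\varphi$ (and, separately, weak convergence in $\dot H^s$ is tested against the $\dot H^s$ inner product, not the $L^2$ pairing, so even the correct direction requires placing the bump in the dual space). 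The nonlinearity of $u\mapsto\int_B|u|$ means it is simply not controlled by weak convergence alone; what rescues the implication is \emph{compactness}, not duality. This is how the paper argues (Lemma~\ref{lem_nontrivial} and Proposition~\ref{prop_sdconv}): if $\|r_n\|_{L^{2^\ast}}\geq c>0$, Theorem~\ref{thm_morrey1} with $r=2$ gives a lower bound on $\|r_n\|_{\mathcal L^{2,N-2s}}$; choosing near-optimizing balls $B_{\lambda_n}(x_n)$ and rescaling yields $\int_{B_1}|\tilde r_n|^2\geq\tilde C>0$, and the \emph{compact} embedding $\dot H^s\hookrightarrow L^2_{\mathrm{loc}}$ forces the weak limit of $\tilde r_n$ to be nonzero, contradicting $G$-weak convergence to zero. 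Your argument can be repaired the same way (even with $r=1$, using $\dot H^s\hookrightarrow L^1_{\mathrm{loc}}$ compactly on the rescaled sequence), but the duality mechanism as you state it does not close the loop.
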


The first form of profile decomposition appeared in~\cite{Str84}, when $s=1$ in analyzing the failure of Palais-Smaile condition, under the name of ``global compactness'' property and involving finitely many profiles (see also \cite{HR} for the same result when $s=2$). Almost at the same time, still for $s\geq 1$ integer, a kind of profile decompositions in the sense of measures for bounded sequences in Sobolev spaces has been given in \cite{lions,lions2} and this aspect will be discussed below.
The first general result, close to Theorem \ref{thm_decomposition} is in \cite{Sol95}, still for $s=1$, using an improved Sobolev embedding in Lorentz spaces. 

Few years later, in the remarkable paper \cite{Ger98}, the author proved Theorem \ref{thm_decomposition} combining, among other things, a subtle analysis of $h-$oscillating sequences in $L^2(\mathds{R}^N)$ and a tricky exhaustion method with the refined Sobolev inequality \eqref{refsobolev} for $X=\dot{B}^s_{2,\infty}$. The specific choice of $X$ is crucial in \cite{Ger98}, among other things, to characterize the absence of profiles in a given sequence. 
Then, profile decompositions in $\dot{H}^s$ spaces has become a common decisive tool in the study of properties of solutions of many evolution equations and related issues  (see e.~\!g.  \cite{BaGe}, \cite{KPV}, \cite{MV}, \cite{Gal01}, \cite{KM06}, \cite{FVV} and the references therein).

Some time after \cite{Ger98}, an abstract approach in general Hilbert space appeared (see \cite{TF07} and the references therein), yielding profile decomposition in $\dot{H}^s$, $s$ integer, in a much simpler way. 
\vspace{1.5mm}

Our contribution here is twofold: on the one hand, following \cite{TF07}, we recover the decomposition result in an easier and more transparent way, on the other hand, in contrast with \cite{Ger98}, we show that absence of profiles can be actually characterized in terms of the much simpler spaces $\mathcal{L}^{r, r\frac{N-2s}{2}}$, for any $1\leq r<2^\ast$ (see Corollary \ref{nofprofiles}). 

It should be also mentioned that, after the paper~\cite{Ger98}, there have been some extensions of profile decompositions to more general Banach spaces, namely to Bessel spaces $\dot{H}^{s,p}$ (\cite{Jaf99}) and, more recently, to Besov spaces $\dot{B}^s_{p,q}$ (\cite{Koc10}). In both cases, the decomposition is heavily based on the construction of concrete unconditional bases  in terms of wavelets. It would be very challenging to try to develop a more general approach in Banach spaces to recover these results in a simpler abstract way.

\vspace{2mm}

Now, in order to study the behavior of a maximizing sequence for \eqref{pbsobolev} and \eqref{pbsobolev2} it is also convenient to establish a concentration-compactness alternative for bounded sequences in the fractional space $\dot{H}^s$ in terms of measures, using methods and ideas introduced in the pioneering works \cite{lions} and \cite{lions2} and developed extensively in literature (see, e.~\!g.,
\cite{flucher}, 
\cite{TF07} and the references therein). 
\vspace{1mm}

We have the following
\begin{thm}\label{the_cca}
Let $\Omega \subseteq \mathds{R}^N$ an open subset and let $\{u_n\}$ be a sequence in $\dot{H}^s(\Om)$ weakly converging to $u$ as $n \to \infty$ and such that
$$
|(-\Delta)^{\frac{s}{2}} u_n|^2dx \tows \mu \ \ \ \text{and} \ \ \ |u_n|^{2^{\ast}}dx\tows \nu \ \ \text{in} \ \mea.
$$ 
Then, either $u_n \to u$ in $L^{2^{\ast}}_{\rm{loc}}(\mathds{R}^N)$ or  there exists a (at most countable) set of distinct points $\{x_j\}_{j\in J} \subset \Omb$ and positive numbers $\{\nu_j\}_{j\in J}$ such that we have
\begin{equation}
\label{quantnu}
\nu=\ |u|^{2^{\ast}}dx+\sum_{j} \nu_j \delta_{x_j}.
\end{equation}
If, in addition, $\Omega$ is bounded, then there exist a positive measure $\tilde{\mu} \in \mathcal{M}(\mathds{R}^N)$ with {\rm spt}~$\!\tilde{\mu}~\subset~\Omb$ and positive numbers $\{\mu_j\}_{j\in J}$  such that
\begin{equation}
\label{quantmu}
\mu=|(-\Delta)^{\frac{s}{2}}u|^2dx+\tilde{\mu}+\sum_{j} \mu_j \delta_{x_j}, \quad \nu_j \leq S^{\ast} (\mu_j)^{\!\frac{2^{\ast}}2}\, .
\end{equation}

\end{thm}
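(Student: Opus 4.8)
The plan is to follow the classical Lions concentration-compactness strategy, adapted to the fractional setting where $(-\Delta)^{s/2}$ plays the role of the gradient. First I would reduce to the case $u=0$ by replacing $u_n$ with $v_n = u_n - u$; since $u_n \tow u$ in $\dot H^s(\Omega)$, the sequence $v_n$ is bounded in $\dot H^s$ and $v_n \tow 0$, and by the Brezis--Lieb lemma (\cite{brezislieb}) one has $\|u_n\|_{L^{2^\ast}}^{2^\ast} = \|u\|_{L^{2^\ast}}^{2^\ast} + \|v_n\|_{L^{2^\ast}}^{2^\ast} + o(1)$, together with $\||(-\Delta)^{s/2}u_n|^2 - |(-\Delta)^{s/2}u|^2 - |(-\Delta)^{s/2}v_n|^2\|_{L^1} \to 0$. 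Hence, up to subsequences, $|(-\Delta)^{s/2}v_n|^2 \tows \tilde\mu := \mu - |(-\Delta)^{s/2}u|^2 dx \geq 0$ and $|v_n|^{2^\ast} \tows \tilde\nu := \nu - |u|^{2^\ast}dx \geq 0$, so it suffices to prove the structure of $\tilde\nu$ (and $\tilde\mu$) for a sequence weakly convergent to zero.

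Next, the key inequality. For any $\varphi \in C_0^\infty(\mathds{R}^N)$, I would apply the Sobolev inequality \eqref{eq_sobolev} to $\varphi v_n$ to get $\|\varphi v_n\|_{L^{2^\ast}}^2 \leq S^\ast{}^{2/2^\ast} \|\varphi v_n\|_{\dot H^s}^2$. The point is to show that, as $n\to\infty$, $\|\varphi v_n\|_{\dot H^s}^2 \to \int |\varphi|^2 d\tilde\mu$; this is where the nonlocality of $(-\Delta)^{s/2}$ requires care, since $(-\Delta)^{s/2}(\varphi v_n) \neq \varphi (-\Delta)^{s/2} v_n$. I would handle the commutator $[(-\Delta)^{s/2},\varphi]v_n$ and show it tends to zero strongly in $L^2$, using that $v_n \tow 0$ in $\dot H^s$ implies $v_n \to 0$ in $L^2_{\mathrm{loc}}$ (by the compact embedding on bounded sets), plus standard commutator/product estimates for fractional Laplacians (or, equivalently, passing through the Gagliardo seminorm when $0<s<1$, or the $s$-harmonic extension, both recalled in the excerpt). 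Granting this, passing to the limit yields the reverse H\"older-type bound $\left(\int |\varphi|^{2^\ast} d\tilde\nu\right)^{2/2^\ast} \leq S^\ast{}^{2/2^\ast} \int |\varphi|^2 d\tilde\mu$ for all $\varphi \in C_0^\infty$. By density this extends to all bounded Borel $\varphi$, and a standard argument (take $\varphi$ approximating indicators of small balls) gives first that $\tilde\nu \ll \tilde\mu$ in the sense of the differentiation basis, and then — because the exponent $2/2^\ast<1$ forces the absolutely continuous part of $\tilde\nu$ with respect to $\tilde\mu$ to vanish — that $\tilde\nu$ is a countable sum of atoms, $\tilde\nu = \sum_j \nu_j \delta_{x_j}$ with $x_j \in \overline\Omega$ (the supports lie in $\overline\Omega$ since $v_n$ is supported there), which is \eqref{quantnu}. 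The dichotomy is then: either $\tilde\nu \equiv 0$, i.e. $v_n \to 0$ in $L^{2^\ast}_{\mathrm{loc}}$, so $u_n \to u$ in $L^{2^\ast}_{\mathrm{loc}}$; or there is at least one atom.

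For the refined statement when $\Omega$ is bounded: $\tilde\mu$ is a finite positive measure, so I decompose it into its atomic part at the points $\{x_j\}$ and the rest, writing $\tilde\mu = \tilde{\tilde\mu} + \sum_j \mu_j \delta_{x_j}$ with $\tilde{\tilde\mu}$ having no atom at any $x_j$; renaming $\tilde{\tilde\mu}$ as $\tilde\mu$ gives the form in \eqref{quantmu}, with $\mathrm{spt}\,\tilde\mu \subset \overline\Omega$. Testing the limiting inequality with $\varphi$ supported near a fixed $x_j$ and shrinking the support gives $\nu_j \leq S^\ast{}^{2/2^\ast}(\tilde\mu(\{x_j\}) + \tilde{\tilde\mu}(\{x_j\}))^{2/2^\ast} = S^\ast{}^{2/2^\ast}\mu_j^{2/2^\ast}$, equivalently $\nu_j \leq S^\ast (\mu_j)^{2^\ast/2}$ after raising to the power $2^\ast/2$ — wait, more carefully: from $\nu_j^{2/2^\ast} \leq S^\ast{}^{2/2^\ast}\mu_j$ one gets $\nu_j \leq S^\ast \mu_j^{2^\ast/2}$, which is exactly the claimed $\nu_j \leq S^\ast(\mu_j)^{2^\ast/2}$. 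I expect the main obstacle to be the commutator estimate $\|[(-\Delta)^{s/2},\varphi]v_n\|_{L^2}\to 0$ and, relatedly, justifying that localization does not lose mass — i.e. that $\int|\varphi|^2 d\tilde\mu$ is genuinely the limit of $\|\varphi v_n\|_{\dot H^s}^2$; this is the fractional analogue of the elementary Leibniz rule and is the one place where the nonlocal nature of the problem genuinely complicates the otherwise routine measure-theoretic machinery. The rest (Brezis--Lieb reduction, reverse H\"older, atomic decomposition via the subunital exponent) is standard once this localization is in hand.
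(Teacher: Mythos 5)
Your overall strategy coincides with the paper's: reduce to $v_n=u_n-u\rightharpoonup 0$ via Brezis--Lieb, apply the Sobolev inequality to $\varphi v_n$, dispose of the commutator $[\varphi,(-\Delta)^{s/2}]v_n$, obtain the reverse H\"older inequality between the limit measures, invoke Lions' lemma for the atomic structure, and shrink test functions around each $x_j$ to get $\nu_j\leq S^\ast \mu_j^{2^\ast/2}$. However, there is a genuine gap precisely at the step you yourself flag as the main obstacle. The vanishing of the commutator is \emph{not} a consequence of $v_n\to 0$ in $L^2_{\rm loc}$: the operator $[\varphi,(-\Delta)^{s/2}]$ is of order $s-1$ and the relevant estimate (Taylor's commutator bound) controls $\|[\varphi,(-\Delta)^{s/2}]v_n\|_{L^2}$ by the \emph{global} norm $\|v_n\|_{H^{s-1}(\mathds{R}^N)}$; one then needs the embedding $\dot H^s(\Omega)\hookrightarrow H^{s-1}(\mathds{R}^N)$ to be compact, which holds only for \emph{bounded} $\Omega$ (for unbounded $\Omega$, translating bumps give $v_n\rightharpoonup 0$ with $\|v_n\|_{H^{s-1}}$ bounded away from zero, and the commutator does not tend to zero). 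Since the theorem allows an arbitrary open $\Omega\subseteq\mathds{R}^N$, your direct argument only establishes \eqref{quantnu} for bounded domains. The paper closes this gap with an extra localization step: it first proves both \eqref{quantnu} and \eqref{quantmu} for bounded $\Omega$, then for general $\Omega$ applies the bounded-domain result to the truncations $u_n^\lambda=\psi_\lambda u_n$ (where $\psi_\lambda(x)=\psi(\lambda x)$, $\psi\equiv 1$ on $B_1$), uses the uniform $\dot H^s$ bound to control the number and location of atoms uniformly in $\lambda$, and lets $\lambda\to 0$ so that $\nu_\lambda=|\psi_\lambda|^{2^\ast}\nu\tows\nu$. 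You need some version of this (or another mechanism) to cover unbounded $\Omega$.

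Two smaller points. First, your intermediate claim that Brezis--Lieb yields $\||(-\Delta)^{s/2}u_n|^2-|(-\Delta)^{s/2}u|^2-|(-\Delta)^{s/2}v_n|^2\|_{L^1}\to 0$ is false in general (the difference equals $2(-\Delta)^{s/2}u\cdot(-\Delta)^{s/2}v_n$, whose $L^1$ norm need not vanish under mere weak convergence); fortunately the conclusion you actually use, namely $|(-\Delta)^{s/2}v_n|^2dx\tows\mu-|(-\Delta)^{s/2}u|^2dx$, does follow by simply expanding the square and testing against $\varphi\in C_0$, since the cross term tends to zero by weak convergence in $L^2$. Second, in deriving \eqref{quantmu} the paper works with $\mu$ directly and therefore needs an additional ingredient (its Lemma on $u\varphi_\lambda\to 0$ in $\dot H^s$ as the support of the cut-off shrinks) to kill the terms involving $u$; your route through $\tilde\mu=\mu-|(-\Delta)^{s/2}u|^2dx$ is legitimate and in fact slightly cleaner on this point, but it still rests on the same bounded-domain commutator lemma, so the restriction to bounded $\Omega$ in the second half of the statement is essential and consistent with your argument.
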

\vspace{1mm}

The previous result extends to the case of the fractional spaces $\dot{H}^s$ a well known fact for $s=1$ and, more generally, when $s$ is an integer (see \cite{lions} and~\cite{lions2}; see also~\cite{TF07} and the references therein); namely that, at least locally, compactness in the Sobolev embedding fails precisely because of concentration of the $L^{2^{\ast}}\!$ norm at countably many points. 
These results have been largely used for the variational treatment of the Yamabe problem and their higher order analogues involving the Paneitz-Branson operators and more generally for semi-linear elliptic equations with critical nonlinearities.
As notice in~\cite{Ger98}, when  $\Omega=\R^N$, a different proof of~Theorem~\ref{the_cca} can be also deduced as a byproduct of the 
profile-decomposition in Theorem~\ref{thm_decomposition} (for a possibly different index set $J$).
In this respect, in~\eqref{quantmu} the sum of Dirac masses comes from those profiles in~\eqref{decomposition} which are peaking at the $x_j$'s.

We will provide a simple proof of Theorem~\ref{the_cca} by following the original argument in~\cite{lions} and~\cite{lions2}; clearly, we need to operate some modifications due to the  non-locality of  $(-\Delta)^{\frac{s}{2}}$. Indeed, our approach relies on pseudodifferential calculus to control the natural error term in the localization by cut-off functions. Using a simple commutator estimate (see, e.~\!g., Taylor \cite{t}) and a standard approximation argument, we will show the compactness of the commutator $[\varphi, (-\Delta)^{\frac{s}{2}}]:\dot{H}^s(\Omega) \to L^2(\mathds{R}^N)$ when $\varphi\in C^{\infty}_0(\R^N)$, at least if $\Omega$ is bounded (see Lemma \ref{lem_commutator}).  As a consequence we will give local description of the lack of compactness in $L^{2^\ast}$ in terms of atomic measures. We hope that these results will be also of use in the variational theory of the fractional Yamabe problem firstly considered in \cite{GQ}.
\vspace{2mm}

As a corollary of Theorems  \ref{the_optimizseq} and \ref{the_cca}, we will  see that concentration always occurs in problem \eqref{pbsobolev2} because of the classification in~\eqref{def_talentiana}; 
see Corollary~\ref{cor_concentrazio-sob}. Existence/nonexistence of optimal functions in problems \eqref{pbsobolev} and \eqref{pbsobolev2} could be studied for other equivalent norms. Even for norms equivalent to \eqref{Hs0-norm} (and analogously to \eqref{gagliardo} and \eqref{tracesobolev}-\eqref{fracextension} defined in Section~\ref{sec_fractional}), e.~\!g.~obtained multiplying by suitable kernels $|a(\xi)|\,, \, \, |K(x,y)|$ and $|A(x,t)|$ bounded from above and below, we expect the existence of optimal function to depend in a nontrivial subtle way on the chosen kernels (see \cite{MP} for similar results in this direction).

\vspace{3mm}

Finally, we consider a family of problems for perturbations of the functional \eqref{funzionale2}. Let $0<\varepsilon<2^{\ast}-2$ and let $\Omega \subset \mathds{R}^N$ be a bounded open set. We set 
\begin{eqnarray}\label{problemaeps}
\dys S^{\ast}_{\eps} \! := \!\sup\left\{ F_{\eps}(u) : u \in \dot{H}^s(\Omega), \, \int_{\R^N} |(-\Delta)^{\frac{s}{2}} u|^{2} dx \leq 1 \right\}
\end{eqnarray}
\begin{eqnarray*} 
\dys \text{where} \ \ F_\eps (u)\!\! := \!\!\int_{\Om} |u|^{2^{\ast}\!-\eps} dx.  
\end{eqnarray*}
The previous maximization problems are subcritical. Indeed, since $\Omega$ is a bounded open set and the  embedding $\dot{H}^s(\Omega) \hookrightarrow L^{2^{\ast}-\varepsilon}(\Omega)$ is compact, the previous problem admits a maximizer $u_\varepsilon\in \dot{H}^s(\Omega)$. Our purpose is to investigate what happens when $\varepsilon~\to~0$ both to the subcritical Sobolev constant $S^{\ast}_\eps$ (i.~\!e., the optimal constant for the embedding  $\dot{H}^s(\Omega) \hookrightarrow L^{2^{\ast}-\varepsilon}(\Omega)$ given in~\eqref{problemaeps})
and to the corresponding maximizers $u_\varepsilon$ (i.~\!e. the corresponding optimal functions).
\vspace{2mm}

Combining Theorem~\ref{the_optimizseq} together with Theorem~\ref{the_cca}, we have

\begin{thm}\label{the_concentrazione1}
Let $\Omega \subset\mathds{R}^N$ be a bounded open set and for each $0<\eps<2^\ast-2$ let $\ue\in \dot{H}^s(\Omega)$ be a maximizer for $S^{\ast}_{\eps}$.  Then
\begin{itemize}
\item[(i)]{$\dys \lim_{\eps\to0} S^{\ast}_{\eps}=S^*$;
}\vspace{2mm}
\item[(ii)]{As $\varepsilon=\varepsilon_n \to 0$, up to subsequences $u_n={\ue}_n$ satisfies $u_n \rightharpoonup 0$ in $\dot{H}^s(\Omega)$ and it concentrates at some point $x_{0}\in\Omb$ both in $L^{2^{\ast}}$\! and in $\dot{H}^s$, i.~\!e.
$$
|u_n|^{2^{\ast}}dx \tows S^{\ast} \delta_{x_0} \ \, \text{and} \  \, |(-\Delta)^{\frac{s}{2}}u_n|^{2}dx \tows \delta_{x_{0}} \ \text{in} \ \mathcal{M}(\mathds{R}^N).
$$
}
\item[(iii)]{
There exists a sequence of points $\{x_n \}\subset \mathds{R}^N$, $x_n\to x_0$ and a sequence of numbers $\{\lambda_n\} \searrow 0$, such that $\tilde{u}_n(x)=\lambda_n^{(N-2s)/2}u_n\left(x_n+\lambda_n x \right)$ converges to $u(x)$ as given by \eqref{def_talentiana}, both in $L^{2^*}\!(\mathds{R}^N)$ and in $\dot{H}^s(\mathds{R}^N)$ as $n \to \infty$.
}
\end{itemize}
\end{thm}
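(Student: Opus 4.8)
The plan is to prove $(i)$ by a direct comparison, then to obtain $(ii)$ by running the concentration--compactness alternative of Theorem~\ref{the_cca} on $\{\ue\}$ and excluding every possibility except concentration at a single point, and finally to get $(iii)$ by recognizing $\{\ue\}$ as a maximizing sequence for the critical problem~\eqref{pbsobolev}, applying Theorem~\ref{the_optimizseq}, and matching its scaling data with the concentration point found in $(ii)$.

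\smallskip

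\noindent\emph{Part $(i)$.} For the upper bound, H\"older's inequality on the bounded set $\Om$ gives, for every admissible $u$, $\int_\Om|u|^{2^\ast-\eps}dx\le|\Om|^{\eps/2^\ast}\big(\int_\Om|u|^{2^\ast}dx\big)^{1-\eps/2^\ast}\le|\Om|^{\eps/2^\ast}(S^\ast)^{1-\eps/2^\ast}$ by~\eqref{eq_sobolev}, so $\limsup_{\eps\to0}S^\ast_\eps\le S^\ast$. For the lower bound I would use the equality $S^\ast_\Om=S^\ast$ recalled above: given $\delta>0$, pick $v\in\dot{H}^s(\Om)$ with $\|(-\Delta)^{s/2}v\|_{L^2}\le1$ and $\int_\Om|v|^{2^\ast}dx>S^\ast-\delta$; since $\Om$ is bounded and $v\in L^{2^\ast}$, dominated convergence yields $F_\eps(v)\to\int_\Om|v|^{2^\ast}dx$ as $\eps\to0$, whence $\liminf_{\eps\to0}S^\ast_\eps\ge S^\ast-\delta$, and $\delta\to0$ gives $(i)$.

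\smallskip

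\noindent\emph{Part $(ii)$.} A maximizer $\ue$ must satisfy $\|(-\Delta)^{s/2}\ue\|_{L^2}=1$, since otherwise a dilation would strictly increase $F_\eps$ by homogeneity; thus $\{\ue\}$ is bounded in $\dot{H}^s$ and, along some $\eps_n\to0$, $u_n:=u_{\eps_n}\rightharpoonup u$ in $\dot{H}^s(\Om)$, $|(-\Delta)^{s/2}u_n|^2dx\tows\mu$, $|u_n|^{2^\ast}dx\tows\nu$ in $\mea$. Part $(i)$ together with the H\"older bound above forces $\int_\Om|u_n|^{2^\ast}dx\to S^\ast$, while~\eqref{eq_sobolev} bounds it by $S^\ast$; as these measures are supported in the fixed compact $\Omb$, no mass escapes and $\nu(\R^N)=S^\ast$. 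Now apply Theorem~\ref{the_cca}. The option $u_n\to u$ in $L^{2^\ast}_{\rm loc}$ is impossible: since ${\rm spt}\,u_n\subset\Omb$ it would upgrade to strong $L^{2^\ast}(\R^N)$ convergence, whence $\int|u|^{2^\ast}=S^\ast$ with $\|(-\Delta)^{s/2}u\|_{L^2}\le1$, so $u$ would be an extremal of~\eqref{eq_sobolev}, i.e. a nonzero function of the form~\eqref{def_talentiana}, contradicting ${\rm spt}\,u\subset\Omb$. Hence $\nu=|u|^{2^\ast}dx+\sum_j\nu_j\delta_{x_j}$ and $\mu=|(-\Delta)^{s/2}u|^2dx+\tilde\mu+\sum_j\mu_j\delta_{x_j}$ with $\nu_j\le S^\ast\mu_j^{2^\ast/2}$ (cf.~\eqref{quantmu}). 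Writing $a:=\|(-\Delta)^{s/2}u\|^2_{L^2}$ and using $\int|u|^{2^\ast}\le S^\ast a^{2^\ast/2}$, $a+\sum_j\mu_j\le\mu(\R^N)\le1$, and the superadditivity $a^{2^\ast/2}+\sum_j\mu_j^{2^\ast/2}\le\big(a+\sum_j\mu_j\big)^{2^\ast/2}$ (true since $2^\ast/2>1$, with equality only when a single term is nonzero), one gets $S^\ast=\nu(\R^N)\le S^\ast\big(a^{2^\ast/2}+\sum_j\mu_j^{2^\ast/2}\big)\le S^\ast$, forcing exactly one of $a,\mu_1,\mu_2,\dots$ to be nonzero and equal to $1$. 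The subcase $a=1$ again produces an extremal~\eqref{def_talentiana} supported in $\Omb$, impossible; hence $a=0$, $u\equiv0$, and there is a single $x_0\in\Omb$ with $\mu=\delta_{x_0}$ and $\nu=S^\ast\delta_{x_0}$, which is $(ii)$.

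\smallskip

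\noindent\emph{Part $(iii)$.} Since $\|(-\Delta)^{s/2}u_n\|_{L^2}=1$ and $\int_{\R^N}|u_n|^{2^\ast}dx\to S^\ast$, the sequence $\{u_n\}$ (extended by zero) is a maximizing sequence for~\eqref{pbsobolev}, so Theorem~\ref{the_optimizseq} yields, up to a subsequence, $x_n\in\R^N$ and $\lambda_n>0$ with $\tilde{u}_n(x):=\lambda_n^{(N-2s)/2}u_n(x_n+\lambda_n x)\to u$ in $L^{2^\ast}(\R^N)$ and in $\dot{H}^s(\R^N)$, $u$ as in~\eqref{def_talentiana}. It remains to check $x_n\to x_0$ and $\lambda_n\to0$. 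I would do this by localization of mass: by $(ii)$ all the mass $S^\ast$ of $|u_n|^{2^\ast}dx$ collects in every ball $B(x_0,\rho)$, while $|\tilde{u}_n|^{2^\ast}\to|u|^{2^\ast}$ in $L^1(\R^N)$ gives, for any $\epsilon>0$, a radius $R$ with $\int_{B(x_n,R\lambda_n)}|u_n|^{2^\ast}dx\ge S^\ast-\epsilon$ for large $n$; as the total mass tends to $S^\ast$, the two families of balls must eventually overlap, so $|x_n-x_0|\le\rho+R\lambda_n$, and were $\lambda_n\not\to0$ one could pass to a subsequence (using ${\rm spt}\,u_n\subset\Omb$ to keep $x_n/\lambda_n$ bounded) along which the preimages $\{x:x_n+\lambda_n x\in B(x_0,\rho)\}$ stay in a fixed bounded set, forcing the fixed strictly positive profile $u$ to carry its whole $L^{2^\ast}$-mass in a proper subregion of $\R^N$ --- a contradiction. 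Thus $\lambda_n\to0$, whence also $x_n\to x_0$. The delicate point of the whole argument is precisely this last matching: Theorems~\ref{the_optimizseq} and~\ref{the_cca} both describe the defect of compactness, but the scaling parameters coming from the profile decomposition are a priori unrelated to the concentration point and scale detected by the measure-theoretic alternative, so reconciling them requires careful control of where the $L^{2^\ast}$-mass of $u_n$ actually lives; conceptually, though, the core of the proof is the superadditivity dichotomy in $(ii)$, which leaves no admissible configuration other than one bubble collapsing to a point.
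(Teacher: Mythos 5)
Your proof is correct and follows essentially the same route as the paper: part (i) by the identical H\"older/approximation comparison, part (ii) by running the concentration--compactness alternative of Theorem~\ref{the_cca} together with the strict superadditivity of $t\mapsto t^{2^\ast/2}$ and the non-attainment of $S^\ast$ on bounded domains (this is precisely the paper's Corollary~\ref{cor_concentrazio-sob}, which you reprove inline), and part (iii) by Theorem~\ref{the_optimizseq}. Two minor remarks: the normalization $\|(-\Delta)^{s/2}\ue\|_{L^2}=1$ follows from multiplying by a scalar $t^{-1}>1$, not from a spatial dilation (which would take you out of $\dot{H}^s(\Omega)$); and your mass-localization argument showing $x_n\to x_0$ and $\lambda_n\to 0$ is sound and in fact supplies a detail that the paper merely asserts as ``clear''.
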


The previous concentration result is well known for $s=1$.
The asymptotic behavior of the optimal functions has been discusses in~\cite{Han91} and~\cite{rey}, at least assuming (i) and the smoothness of the domain~$\Om$.
For the case of general possibly non-smooth domains we refer to~\cite{Pal11b}. It would interesting  to characterize the concentration point~$x_0$ as critical point of some function.
This is known to be the case when $s=1$ or $s=2$, the function being the regular part of the Green function of the Laplacian or the BiLaplacian in the domain~$\Om$  (see \cite{Han91}, \cite{rey} and \cite{ChGe}).

\vspace{2mm}

Here, we also note that the maximizers~$\ue \in \dot{H}^s(\Om)$ discussed in~Theorem  \ref{the_concentrazione1} are in fact solutions of the semi-linear equation
\begin{equation}\label{lagrange}
(-\Delta)^{s} u_{\eps} = \lambda |u_{\eps}|^{2^{\ast}-2-\eps}\ue \ \ \text{in} \ (\dot{H}^s(\Omega))',
\end{equation}
where $\lambda=(S^{\ast}_\eps)^{-1}$ is a Lagrange multiplier. Indeed, \eqref{lagrange} is the Euler-Lagrange equation for the functional $F_\varepsilon$ among functions with $\dot{H}^s$ norm equal to one. 
Our results yield a concentration phenomenon for a sequence of solutions $\ue$, as $\varepsilon\to 0$.  

In this respect, another critical problem that would be very natural to investigate is
\begin{equation}\label{brezisnirenberg}
(-\Delta)^{s} u -\eta u=  |u|^{2^{\ast}-2}u \ \ \text{in} \ (\dot{H}^s(\Omega))',
\end{equation}
where $\eta>0$ is a parameter. Well known results for $s=1$ (see \cite{BrNi}) and $s=2m$ an even integer (see~\cite{EFJ} and~\cite{PS}) suggest that, even for fractional values of $s$, existence results for \eqref{brezisnirenberg} should always depend in a delicate way on $\eta$ (see,~e.~\!g., the forthcoming paper~\cite{SV13} for first results when $s\in (0,1)$).

\vspace{2mm}

The rest of the paper is organized as follows. In Section~\ref{sec_spaces} we briefly recall the definitions and some basic properties of the function spaces we deal with, also analyzing the corresponding scaling properties. In Section~\ref{sec_improved} we will prove the refined Sobolev embeddings in terms of Morrey spaces, Theorem~\ref{thm_morrey1} and Theorem~\ref{thm_morrey2}. In Section~\ref{sec_maxi}, we study the optimizing sequences for the Sobolev embedding \eqref{eq_sobolev}, proving Theorem~\ref{the_optimizseq}. Section~\ref{sec_decomposition} is devoted to the application of the abstract $G$-weak  convergence to profile decomposition in the spaces $\dot{H}^s$, as given by Theorem~\ref{thm_decomposition}. In Section \ref{sec_cca}, we prove Theorem~\ref{the_cca} by establishing the concentration-compactness alternative in terms of measures and we discuss maximizing sequences for the Sobolev inequality~\eqref{pbsobolev2} (see Corollary \ref{cor_concentrazio-sob}). Finally, we analyze the asymptotic behavior of the subcritical Sobolev constant $S^\ast_\eps$ and the corresponding optimal functions proving Theorem~\ref{the_concentrazione1}.

\vspace{1mm}

\section{Relevant function spaces}\label{sec_spaces}

Throughout the paper, $N$ will always be the dimension of the ambient space and will be greater or equal than 1. {F}or any real $0<s<N/2$, we denote by
\begin{equation}\label{def_2star}
2^{*}\equiv 2^*_s:= 2N/(N-2s)
\end{equation}
the standard critical Sobolev exponent.
\vspace{1mm}

Also, we follow the usual convention of denoting by $c$ a general positive constant 
that may vary from line to line. Relevant dependencies will be emphasized by using parentheses or special symbols.

\vspace{1mm}

As usual, we denote by 
$$
B_R(x_0)=B(x_0;R):=\{x\in \mathds{R}^N : |x-x_0|<R\}
$$
the open ball centered in $x_0\in \mathds{R}^N$ with radius $R>0$. When not important and clear from the context, we shall use the shorter notation $B_R=B_R(x_0)$.

\subsection{Fractional Sobolev spaces}\label{sec_fractional}

{F}or each $s\geq 0$ let
$$
H^s(\mathds{R}^N)=\big\{ u\in L^2(\mathds{R}^N) \, \,\hbox{s.\!~t.} \, \, |\xi|^{s}\hat{u}(\xi) \in L^2(\mathds{R}^N)\, \big\}
$$
be the standard {\em fractional Sobolev space} $H^s$ defined using the Fourier transform
$$
\dys \mathcal{F}(u)(\xi)~=~\hat{u}(\xi) \, =\, \frac{1}{(2\pi)^{\frac{N}2}}\int_{\R^N}e^{-ix\cdot\xi}u(x)\,dx.
$$
\vspace{1mm}

As usual, the space $H^s(\mathds{R}^N)$ can be equivalently defined as the completion of $C_0^\infty(\mathds{R}^N)$ with respect to the norm 
\begin{equation}
\label{Hs-norm}
\| u\|^2_{H^s}=\| (Id-\Delta)^{\frac{s}{2}} u\|^2_{L^2}=\int_{\mathds{R}^N}(1+|\xi|^2)^s|\hat{u}(\xi)|^2 d\xi \, , 
\end{equation}
where the operator $(Id-\Delta)^{\frac{s}{2}}=\mathcal{F}^{-1}\circ M_{(1+|\xi|^2)^{s/2}} \circ \mathcal{F}$ is conjugate to the multiplication operator on $L^2(\mathds{R}^N)$ given by the function $(1+|\xi|^2)^{s/2}$.

It is well known that for $0<s<N/2$, 
 the Sobolev inequality \eqref{eq_sobolev} is valid
for an explicit positive constant~$S^{\ast}=S^{\ast}(N, s)$ given by \eqref{def_sstar},
and for any function $u \in H^s(\mathds{R}^N)$. In order to discuss inequality \eqref{eq_sobolev}, it is very natural to consider for each $0\!~\!<\!~\!s\!~\!<\!~\!N/2$ the homogeneous Sobolev space
\begin{equation}\label{def_hs0}
\dys \dot{H}^s(\mathds{R}^N)=\big\{ u\in L^{2^{\ast}}(\mathds{R}^N) \, \,\hbox{s.\!~t.} \, \, |\xi|^{s}\hat{u}(\xi) \in L^2(\mathds{R}^N)\, \big\}.
\end{equation}
This space can be equivalently defined as the completion of $C_0^\infty(\mathds{R}^N)$ under the norm~\eqref{Hs0-norm}
and inequality \eqref{eq_sobolev} holds by density on $\dot{H}^s(\mathds{R}^N)$.
\vspace{2mm}

When $0<s<1$, a direct calculation using Fourier transform (see, e.~\!g., \cite[Proposition~3.4]{DPV12}) gives
\begin{equation}
\label{gagliardo}
\int_{\mathds{R}^N}|\xi|^{2s}|\hat{u}(\xi)|^2 d\xi=c(N,s) \int_{\mathds{R}^N} \int_{\mathds{R}^N} \frac{|u(x)-u(y)|^2}{|x-y|^{N+2s}}\, dx\,dy \, , 
\end{equation}
which provides an alternative formula for the norm on $\dot{H}^s(\mathds{R}^N)$. The previous equality fails for $s\geq 1$, since in that case the right hand-side in \eqref{gagliardo} is known to be finite if and only if $u$ is constant (see \cite{Bre02}).

When $0<s<1$, according to \cite{CS07} (see also \cite{CG11} for the more difficult case $1<s<N/2$, $s\not \in \mathds{N}$), the Sobolev inequality \eqref{eq_sobolev} is also equivalent to the trace Sobolev embedding $H^1_0(\mathds{R}^N\times [0,\infty),\, t^{1-2s}\,dx\,dt) \hookrightarrow L^{2^*}(\mathds{R}^N)$. Indeed, taking for simplicity $u \in C^\infty_0(\mathds{R}^n)$ and  $U\in C^\infty_0(\mathds{R}^N\times [0,\infty))$ such that $U(x,0)\equiv u(x)$ we have 
\begin{equation}
\label{tracesobolev}
\| u\|^2_{L^{2^*}(\mathds{R}^N)}\leq \, {S^{\ast}}^{2/2^*}\!\! \int_{\mathds{R}^N}|\xi|^{2s}|\hat{u}(\xi)|^2 d\xi 
\, \leq\, C(N,s) \int_{\mathds{R}^N}\int_0^\infty |\nabla U|^2 t^{1-2s} \,dx\,dt \, ,
\end{equation}
which extends to a bounded trace operator $T_r:H^1_0\to L^{2^\ast}$.
Moreover, the second inequality in \eqref{tracesobolev} is an equality if and only if the extension $U$ satisfies
\begin{equation}
\label{fracextension}
\left\{
\begin{array}{ll}
{\rm div} \left( t^{1-2s} \nabla U \right)=0 &   \hbox{in} \quad \mathds{R}^N \times (0,\infty) \, , \\
U(\cdot,0)=u & \hbox{in} \quad \mathds{R}^N \, .
\end{array}
\right.
\end{equation} 
Actually, the solution operator to \eqref{fracextension} allows to identify $\dot{H}^s(\mathds{R}^N)$ as the trace space of  $H^1_0(\mathds{R}^N\times [0,\infty), t^{1-2s}\,dx\,dt)$ and the Sobolev inequality \eqref{eq_sobolev} as the trace inequality in \eqref{tracesobolev}.

\vspace{1mm}

\subsection{Besov spaces}\label{sec_besov}
For any real $\alpha >0$ the homogeneous Besov space $\dot{B}^{-\alpha}_{\infty,\infty}$ is defined as the set of tempered distributions on $\mathds{R}^N$ (possibly modulo polinomials) such that 
$$
\sup_{j\in\Z} 2^{-\alpha j}\| (\Delta_j u)\|_{L^\infty} < \infty \, ,
$$
where $\Delta_j$ are the frequency localization operators at frequency of order $|\xi| \sim 2^{-j}$ associated to a standard Paley-Littlewood decomposition in frequency space.

In the rest of the paper, we make essential use of the equivalent thermic description for the homogeneous  Besov spaces with negative exponent above. 
To this aim, for each $t\geq 0$, we denote by
\begin{equation}\label{def_heat}
P_t:=e^{t\Delta}
\end{equation}
the standard heat semigroup on $\R^N$. For any real $\alpha >0$, one can equivalently define the homogeneous Besov space $\dot{B}^{-\alpha}_{\infty,\infty}$ as the space of tempered distributions $u$ on $\R^N$  (possibly modulo polinomials) for which the following norm
\begin{equation}\label{def_besovnorm}
\dys
\|u\|_{\bb} := \sup_{t>0} t^{\alpha/2} \|P_t u\|_{L^\infty}
\end{equation}
is finite.  The thermic description above is classic and various references are available; see, e.~\!g., Theorem~5.4 in the book~\cite{Lem02} by Lemari\'e-Rieusset.

\vspace{1mm}
The special case which is relevant to us is the Besov space of index $\alpha = N/2^{*}$, since this is the case when the norm in~\eqref{def_besovnorm} is invariant by dilation and translation with respect to the same scaling factor of the critical Lebesgue space~$L^{2^\ast}$ and the fractional Sobolev spaces $\dot{H}^s$ (see~\eqref{def_funzioni}). This is easy to check by computation. Indeed, take $u\in \dot{B}^{-N/2^\ast}_{\infty,\infty}$ and, for any $x_0\in\R^N$ and any $\lambda>0$, consider the function $u_{x_0,\lambda}$ given by~\eqref{def_funzioni}.
By definition in~\eqref{def_heat} together with the standard change of variable formula, 
it follows that
$$
\| u_{x_0,\lambda}\|_{\dot{B}^{-{N}/{2^\ast}}_{\infty,\infty}} \equiv \|u\|_{\dot{B}^{-{N}/{2^\ast}}_{\infty,\infty}}.
$$

\vspace{1mm}

\subsection{Morrey spaces}\label{sec_morrey}
We recall the definition of the {\it Morrey spaces}~$\mathcal{L}^{r,\gamma}$, introduced by Morrey  as a refinement of the usual Lebesgue spaces. 
A measurable function $u:\R^N\to\R$ belongs to the Morrey space~$\mathcal{L}^{r, \gamma}(\R^N)$, with $r\in [1,\infty)$ and $\gamma\in [0,N]$, if and only if
\begin{equation}\label{def_morrey}
\|u\|^r_{\mathcal{L}^{r, \gamma}(\R^N)} := \sup_{R>0;\ x\in \R^N} R^{\,\gamma} \dashint_{B_R(x)} |u|^r\, {\rm d}y \, < \, \infty.
\end{equation}
An equivalent definition can be provided by using cubes $Q\subseteq\R^N$ instead of balls.
By definition, 
one can see that  
 if $\gamma=N$ then the Morrey spaces $\mathcal{L}^{r, N}$ coincide with the usual Lebesgue spaces ${L}^r$ for any $r\geq 1$; similarly  $\mathcal{L}^{r,0}$ coincide with $L^{\infty}$.

\vspace{2mm}

It is worth noticing that the exponents $r$ and $\gamma$ in~\eqref{def_morrey} endow the spaces $\mathcal{L}^{r,\gamma}$ with the same dilation and translation invariance of the Lebesgue space $L^{2^\ast}$ -- and therefore of~$\dot{H}^s$ and~$\dot{B}^{-N/2^\ast}_{\infty,\infty}$ -- if (and only if) they are suitably coupled, namely $\gamma/r=N/{2^\ast}$. Indeed, for any $x_0\in \R^N$ and any $\lambda>0$, let $u_{x_0,\lambda}$ be the function defined by~\eqref{def_funzioni}. By change of variable formula, one can see that the following equality holds
\begin{equation*}
\dys
\| u_{x_0,\lambda} \|_{\mathcal{L}^{r, \frac{N-2s}{2}r}} \, = \, \| u \|_{\mathcal{L}^{r, \frac{N-2s}{2}r}},
\end{equation*}
for any $1 \leq r \leq 2^{\ast}$.
Also, in the rest of the paper, we make use of the following property, which one can plainly deduce by the definition in~\eqref{def_morrey}. For any $\beta>0$,
$$
\| |u|^\beta\|_{\mathcal{L}^{r,\gamma}} = \|u\|^\beta_{\mathcal{L}^{\beta r,\gamma}}. 
$$
Finally note that H\"older inequality readily yields the embedding $\mathcal{L}^{r,\gamma} \hookrightarrow \mathcal{L}^{1,\gamma/r}$ for any $r \in (1,\infty)$ and any $\gamma \in (0,N)$. Equivalently, with the same restriction on the parameters, there exists a constant $C=C(N,r,\gamma)>0$ such that 
\begin{equation}\label{lem_8}
\dys \| u \|_{\mathcal{L}^{1,\gamma/r}} \, \leq \, C \|u \|_{\mathcal{L}^{r,\gamma}},
\end{equation}
for all $u\in \mathcal{L}^{r,\gamma}(\R^N)$.

\section{Improved Sobolev inequalities}\label{sec_improved}

\subsection{Refinements via Riesz potentials}
This section is devoted to the proofs of the improved Sobolev embedding given in Theorems~\ref{thm_morrey1} and~\ref{thm_morrey2} based on the weighted estimates on Riesz potentials established in \cite{SW92}. A slightly different proof, yet involving the same kind of Calder\'on-Zygmund type techniques could be obtained using a bilinear estimate of the Riesz potentials on Morrey spaces proved in \cite{SST11}.
\vspace{1mm}

First, we recall the characterization of the fractional Sobolev space $\dot{H}^s(\R^N)$ in terms of Riesz potentials. For any $u\in \dot{H}^s(\R^N)$ let $g\in L^2(\mathds{R}^N)$  be  a function such that $\hat{g}(\xi) = |\xi|^s \hat{u}(\xi)$. In view of Plancherel's Theorem we have
\begin{equation}\label{g}
\dys
\| u\|_{\dot{H}^s} 
\, = \, \|(-\Delta)^{\frac{s}{2}} u \|_{L^2} 
\, = \, \|g\|_{L^2},
\end{equation}
On the other hand, using Fourier transform, we can write
\begin{equation}\label{eq_17stella}
u  =  \ff^{-1}\left(\frac{1}{|\xi|^s} |\xi|^s \hat{u}(\xi)\right)
 =  \ff^{-1} \left ( \frac{1}{|\xi|^s} \hat{g}(\xi)\right)
 =  \ff^{-1} \left( \frac{1}{|\xi|^s}  \right) \ast g=  \ii_s g,
\end{equation}
where, up to an explicit constant $c=c(N,s)$ that will be neglected in the rest of this section, the symbol $\ii_s g$ in~\eqref{eq_17stella} denotes the standard Riesz potential of order $s$, namely 
$$
\dys
\ii_s g(x) := \int_{\R^N} \frac{g(y)}{|x-y|^{N-s}}\, dy.
$$
As it is well known, the Sobolev inequality~\eqref{eq_sobolev} is equivalent to the fact that $\ii_s: L^2 \to L^{2^\ast}$ is well defined and it is a bounded operator.

\vspace{2mm}

The following norm inequality for Riesz potentials on weighted Lebesgue space  is the crucial ingredient in proving Theorem \ref{thm_morrey1}.

\begin{thm}\label{thm_vw}{\rm(see} \cite[Theorem 1]{SW92}{\rm)}.
Suppose that $0<s<N$, $1<p\leq q <\infty$ and that $V$ and $W$ are nonnegative measurable functions on $\R^N$, $N\geq 1$. If, for some $\sigma>1$,
\begin{equation}\label{eq_18stella2}
\dys |Q|^{\frac{s}{N}+\frac{1}{q}-\frac{1}{p}} \left( \dashint_Q V^\sigma dx\right)^{\frac{1}{q\sigma}}
\left( \dashint_Q W^{(1-p')\sigma} dx\right)^{\frac{1}{p'\sigma}} \, \leq \, C_\sigma,
\end{equation}
for all cubes $Q\subset\R^N$,
then for any function $f \in L^p(\R^N;W(x)dx)$ we have 
\begin{equation}\label{eq_18stella}
\dys 
\left( \int_{\R^N} \left| \ii_s f (x) \right|^q V(x) dx \right)^{\frac{1}{q}}
\, \leq \,
c\, C_\sigma \left( \int_{\R^N} \left| f (x) \right|^p W(x) dx \right)^{\frac{1}{p}},
\end{equation}
where the constant $c$ depends only on $p, q$ and $N$.
\end{thm}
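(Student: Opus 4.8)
The plan is to reduce \eqref{eq_18stella} to a sparse dyadic estimate and then extract the crucial gain from the super‑integrability exponent $\sigma>1$. Write $\langle h\rangle_Q:=\dashint_Q|h|$ for the average of $|h|$ over a cube $Q$, let $\ell(Q)$ be the side length of $Q$, and set $v:=W^{1-p'}$, $w:=V$; then \eqref{eq_18stella2} reads $|Q|^{\frac sN+\frac1q-\frac1p}\langle w^\sigma\rangle_Q^{1/(q\sigma)}\langle v^\sigma\rangle_Q^{1/(p'\sigma)}\le C_\sigma$ for all cubes $Q$, and, substituting $f\mapsto fv$, the conclusion \eqref{eq_18stella} is equivalent to $\|\ii_s(fv)\|_{L^q(w)}\le c\,C_\sigma\,\|f\|_{L^p(v)}$.

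First I would pass to a dyadic model. Slicing the kernel $|x-y|^{s-N}$ over the annuli $\{2^{-k-1}\le|x-y|<2^{-k}\}$ and covering each annulus by boundedly many cubes of side $\sim 2^{-k}$ taken from finitely many translated dyadic grids $\mathcal D^1,\dots,\mathcal D^{m_0}$ (the ``one‑third trick''), one gets the pointwise bound $\ii_s h(x)\le c\sum_{m}\sum_{Q\in\mathcal D^m,\,x\in Q}\ell(Q)^s\langle h\rangle_Q$ for $h\ge0$. Hence it suffices to bound, for a single grid $\mathcal D$, the positive dyadic operator $A_{\mathcal D}h:=\sum_{Q\in\mathcal D}\ell(Q)^s\langle h\rangle_Q\chi_Q$; testing against $g\ge0$ with $\|g\|_{L^{q'}(w)}\le1$ and using $\int A_{\mathcal D}(fv)\,gw\,dx=\int fv\,A_{\mathcal D}(gw)\,dx$, the target estimate becomes
$$
\sum_{Q\in\mathcal D}\frac{\ell(Q)^s}{|Q|}\Big(\int_Q fv\Big)\Big(\int_Q gw\Big)\ \le\ c\,C_\sigma\,\|f\|_{L^p(v)}\,\|g\|_{L^{q'}(w)},\qquad f,g\ge0.
$$

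Next I would run a stopping‑time argument, retaining the maximal cubes on which $\dashint_Q fv$ or $\dashint_Q gw$ first doubles relative to the previously selected ancestor; this replaces the sum over all of $\mathcal D$ by a sum over a sparse subfamily $\mathcal S\subset\mathcal D$ admitting pairwise disjoint major subsets $E_Q\subset Q$ with $|E_Q|\ge\tfrac12|Q|$, at the cost of a dimensional constant. Applying H\"older on each cube, $\int_Q fv\le(\int_Q f^pv)^{1/p}(\int_Q v)^{1/p'}$ and $\int_Q gw\le(\int_Q g^{q'}w)^{1/q'}(\int_Q w)^{1/q}$, and collecting the powers of $|Q|$, one is reduced to estimating
$$
\sum_{Q\in\mathcal S}(\star)_Q\,\Big(\int_Q f^pv\Big)^{1/p}\Big(\int_Q g^{q'}w\Big)^{1/q'},\qquad (\star)_Q:=|Q|^{\frac sN+\frac1q-\frac1p}\langle v\rangle_Q^{1/p'}\langle w\rangle_Q^{1/q}.
$$

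The final and most delicate step is to sum this series. The plain $A_{p,q}$ bound $(\star)_Q\le C$ is \emph{not} enough, because $\mathcal S$ is only sparse and not disjoint, so the Carleson‑type sums $\sum_{Q\in\mathcal S}\int_Q f^pv$ need not be controlled by $\|f\|_{L^p(v)}^p$. The point of the super‑integrability $\sigma>1$ in \eqref{eq_18stella2} is precisely to upgrade the averages $\langle v\rangle_Q,\langle w\rangle_Q$ to the bumped ones $\langle v^\sigma\rangle_Q^{1/\sigma},\langle w^\sigma\rangle_Q^{1/\sigma}$, which, via a reverse‑H\"older / Carleson‑embedding argument applied to \emph{both} weights simultaneously, creates exactly enough room to sum the sparse series and land on $c\,C_\sigma\|f\|_{L^p(v)}\|g\|_{L^{q'}(w)}$ with $c=c(N,p,q)$. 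Concretely this can be organized by splitting $s=s_1+s_2$, dominating the cube sum by $\int_{\R^N}M_{s_1}(fv)\,M_{s_2}(gw)\,dx$ (using the disjointness of the $E_Q$ and the fractional maximal operators $M_{s_i}$), and invoking the unweighted off‑diagonal bounds $M_{s_i}:L^{a_i}\to L^{b_i}$ whose exponents are rendered admissible exactly by the $\sigma$‑bump. I expect this passage — turning the failure of the plain $A_{p,q}$ testing into a genuinely convergent packing estimate over $\mathcal S$, on both weights at once — to be the main obstacle; an equivalent but more hands‑on alternative is the original Sawyer--Wheeden route, a coupled Calder\'on--Zygmund decomposition adapted to $f$ and $g$ with the summation carried out directly, which avoids maximal functions at the price of a heavier combinatorial argument.
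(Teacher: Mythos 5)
First, a point of comparison: the paper does not prove this statement at all --- it is imported verbatim from \cite[Theorem 1]{SW92}, with only two cosmetic remarks (reduction to nonnegative $f$ by splitting into positive and negative parts, and the factorization of the constant into $c\,C_\sigma$, which the authors justify by ``a close inspection of the proof in \cite{SW92}''). So you are attempting to reprove from scratch a substantial external theorem. Your reductions are sound as far as they go: the substitution $v=W^{1-p'}$, the dyadic pointwise domination of $\ii_s$ via shifted grids, the dualization of the positive dyadic operator, the stopping-time passage to a sparse family, and the per-cube H\"older step all check out (including the exponent bookkeeping that produces $(\star)_Q=|Q|^{s/N+1/q-1/p}\langle v\rangle_Q^{1/p'}\langle w\rangle_Q^{1/q}$, and the use of $\ell^{q'}\hookrightarrow\ell^{p'}$ implicit in testing against $g$), and you correctly diagnose why the bare bound $\sup_Q(\star)_Q\leq C_\sigma$ cannot close the argument.

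The gap is that the decisive step --- converting the hypothesis $\sigma>1$ into a convergent estimate for the sparse sum --- is described but not proved, and this step \emph{is} the theorem. ``A reverse-H\"older / Carleson-embedding argument applied to both weights simultaneously'' is a program, not an argument: nothing in your write-up shows how the bumped averages $\langle v^\sigma\rangle_Q^{1/\sigma}$, $\langle w^\sigma\rangle_Q^{1/\sigma}$ actually produce the summability over $\mathcal S$ that the unbumped ones lack. Your concrete fallback is also problematic as stated: after splitting $s=s_1+s_2$ and dominating the sparse form by $\int M_{s_1}(fv)\,M_{s_2}(gw)\,dx$, you propose to invoke \emph{unweighted} off-diagonal bounds $M_{s_i}:L^{a_i}\to L^{b_i}$; but the hypotheses only control $\|f\|_{L^p(v)}$ and $\|g\|_{L^{q'}(w)}$, not any unweighted Lebesgue norm of $fv$ or $gw$, so choosing $a_i,b_i$ and reabsorbing the weights is exactly the place where \eqref{eq_18stella2} with $\sigma>1$ must be used quantitatively --- and that computation is absent. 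Deferring at this point to ``the original Sawyer--Wheeden route'' amounts to citing the result rather than proving it. To turn the proposal into a proof you would need to carry out the packing/Carleson estimate in full (for instance along the lines of P\'erez's proof that power bumps suffice for two-weight fractional integral bounds), or else simply present the theorem as the paper does, as a quoted result with the two adaptation remarks.
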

Note that the statement of Theorem~1 in \cite{SW92} is slightly adapted to our purpose. The original statement involves only nonnegative functions, a case from which the statement above clearly follows considering positive and negative parts. 
In addition, in the original formulation the constants in~\eqref{eq_18stella} are actually presented as a unique constant~$c=c(p,q,N, V,W)$. The fact that it is a product of the separated constants in~\eqref{eq_18stella} can be easily deduced by a close inspection of the proof in~\cite{SW92}. 

\vspace{1mm}

\begin{proof}[\bf Proof of Theorem~\ref{thm_morrey1}]
The main idea in the present proof consists into a careful application of Theorem~\ref{thm_vw} to the function $g$ in~\eqref{g}, after interpolating the exponent~$2^\ast$ via a suitable tuning.
\vspace{1mm}
Choosing $W\equiv 1$ and $p=2$, the assumption in~\eqref{eq_18stella2} can be rewritten as 
\begin{equation}\label{eq_19stella}
\dys
|Q|^{\frac sN +\frac 1q -\frac12} \left(\dashint_Q V^\sigma dx\right)^{\frac1{\sigma q}}
\, \leq \, C_\sigma \, ,
\end{equation}
for some exponent $\sigma>1$ to be determined.

Now, for any fixed $2\leq  q<2^\ast$ and $u\in \dot{H}^s(\R^N)$ , we can chose the weight $V$ as $V(x) = |u(x)|^{2^\ast-q}$, so that 
\begin{equation}\label{eq_19stella2}
|u(x)|^{2^\ast}
\, = \, |u(x)|^q |u(x)|^{2^\ast-q} = |u(x)|^q V(x), \quad \forall x\in \R^N.
\end{equation}
\vspace{1mm}
Hence, taking $\sigma=\sigma(q):=1/(2^\ast-q)>1$, we have the obvious equality $\dys V(x)^\sigma=|u(x)|$, and the corresponding restriction on the tuning exponent
\begin{equation}\label{eq_c}
\max\big\{ 2, 2^\ast-1\big\} \, < \, q < \, 2^\ast,
\end{equation}
which will allow us to apply \eqref{eq_18stella} if \eqref{eq_18stella2} holds. Notice also that in high dimensions we can actually take any $q$ in $[2,2^\ast)$ because $2^\ast-1<2$.

Going back to \eqref{eq_19stella}, up to a positive constant  depending only on  $N,p,q,s$ (when passing from balls to cubes), we have
\begin{eqnarray}\label{eq_20stella}
\dys
|Q|^{\frac sN +\frac 1 q -\frac12}\left( \dashint_Q |u| \,dx\right)^{\!\frac{2^\ast-q}q}
\!& = &\! \left ( R^{\sigma(sq+N-\frac{Nq}{2})}\dashint_{B_R}|u|\, dx\right)^{\!\frac{1}{\sigma q}} 
\, = \, \left ( R^{\frac{N-2s}{2}}\dashint_{B_R}|u| \, dx\right)^{\!\frac{1}{\sigma q}}
\nonumber \\[1ex]
& \leq & \!\|u\|^{\frac{2^\ast-q}{q}}_{\mathcal{L}^{1,\frac{N-2s}{2}}}=:C_\sigma.
\end{eqnarray}
Keeping in mind the inequality above and the decomposition in~\eqref{eq_19stella2}, we can apply Theorem~\ref{thm_vw}, choosing in particular $p=2$ there. We get
$$
\dys
\int |u|^{2^\ast} dx
\ = \ \|\ii_s g\|^q_{L^q_V} 
\ \leq \ (c\, C_\sigma)^q \|g\|^q_{L^2}
\ = \ c\|u\|^{2^\ast-q}_{\mathcal{L}^{1,\frac{N-2s}{2}}} \|u\|^q_{\dot{H}^s}, 
$$
which plainly yields
\begin{equation}\label{39b}
\|u\|_{L^{2^\ast}} \, \leq \, C \|u\|_{\dot{H}^s}^\theta \|u\|^{1-\theta}_{\mathcal{L}^{1,\frac{N-2s}{2}}},
\end{equation}
for any exponent $\theta=q/2^\ast$ such that $\max\big\{{2}/{2^\ast}, 1\!-\!{1}/{2^\ast}\big\} 
 <  \theta  <  1$, as desired. Thus, if $r=1$ we have the conclusion, otherwise we also apply \eqref{lem_8} with $\gamma=r (N-2s)/2$ and the proof is complete.
\end{proof}

\vspace{1mm}

The preceding proof can be extended with simple modifications to cover the case of functions in $\dot{W}^{1,p}$.
 As a consequence we obtain the improved Sobolev inequality given by Theorem~\ref{thm_morrey2}.
We would like to notice that in such case we cannot expect to use an analog representation of Sobolev functions via Riesz potentials as the one given at the beginning of this section for the nonlocal case. Anyway, as it is well known, a pointwise
potential estimate always holds and this will suffice to extend to the nonlinear case the strategy via weighted Lebesgue inequalities presented before.

\vspace{1mm} For  $N\geq3$ (the case $N=2$ being analogous via the logarithmic kernel) and $u \in C^\infty_0(\R^N)$, again neglecting a multiplicative constant $c=c(N)$, we have
$$
\dys
u(x)  \, = \,  \Delta^{-1}\Delta u \, = \, \int\frac{1}{|x-y|^{N-2}} \Delta u(y) \, dy
\, = \, (2-N) \int \frac{x-y}{|x-y|^N}\nabla u(y)\, dy,
$$
which yields
\begin{equation}\label{eq_22stella}
\dys
|u(x)| \, \leq \, c \int\frac{|\nabla u(y)|}{|x-y|^{N-1}}\, dy
\, = \, c \,\ii_1(|\nabla u|)(x) \quad \text{for} \ x \in \R^N,
\end{equation}
and the same pointwise bound easily extends by density for any $u \in \dot{W}^{1,p}(\R^N)$.
\vspace{1mm}
\begin{proof}[\bf Proof of Theorem~\ref{thm_morrey2}]
We assume $1<p<N$. By taking into account the estimate in~\eqref{eq_22stella}, we can proceed as in the proof of Theorem~\ref{thm_morrey1}. Hence, for any $p \leq q<p^\ast$ it suffices to decompose the function $|u|^{p^\ast}$ as follows,
$$
|u(x)|^{p^\ast} = |u(x)|^q V(x),
$$
where $\dys V(x):= |u(x)|^{p^\ast-q}$ will be the weight to be chosen in~\eqref{eq_18stella2}, with the existence of the exponent $\sigma=1/(p^\ast-q) >1$ assured by taking $\max\big\{p, p^\ast -1\big\} < q < p^\ast$; again any choice $p \leq q <p^\ast$ is admissible in high dimensions, since $p^\ast-1<p$ in such case (recall the observation regarding the range in~\eqref{eq_c}).
\vspace{1mm}

We can deduce the dependence of the constant $C_\sigma$ as at the end of proof of Theorem~\ref{thm_morrey1}, and if $\theta=q/p^\ast$ from ~\eqref{eq_18stella} we finally get
\begin{equation}\label{eq_23stella}
\dys
\|u\|_{L^{p^\ast}} \, \leq \, C \|u \|^\theta_{\dot{W}^{1,p}} \| u \|^{1-\theta}_{\mathcal{L}^{1,\frac{N-p}{p}}},
\end{equation}
where $\max \big\{ p/p^\ast, 1 - 1/p^\ast \big\} < \theta < 1$. Possiby applying \eqref{lem_8} the proof is complete.
\end{proof}

\vspace{1mm}

Again, it is worth noticing that~\eqref{eq_23stella} constitutes a refinement of the usual Sobolev inequality $\|u\|_{L^{p^\ast}} \leq c \|u\|_{\dot{W}^{1,p}}$ because of the  embedding
$
\mathcal{L}^{r,r\frac{N-p}{p}} \hookrightarrow L^{p^\ast},
$
 for any $1 < p < N$ and any $1\leq r<p^\ast$, which follows from H\"older inequality.

\vspace{1mm}

\begin{rem}{\em
In both the proofs of Theorem~\ref{thm_morrey1} and Theorem~\ref{thm_morrey2}, a suitable choice of the involved weights and parameters will allow us to make use of Theorem~1.1 in \cite{SST11} in substitution to Theorem~\ref{thm_vw}. The deduced refined inequalities will be exactly as proved before.
}\end{rem}

\begin{rem} {\em Finally, it is worth noticing that the range of validity for the tuning parameter $\theta$, both in~\eqref{39b} and in~\eqref{eq_23stella}, is somehow restricted, unless $N$ is large, a limitation that is absent in the second proof that will be presented in the next subsection. 
We remark that, by simple modifications of the argument below, one could get the full range of convexity exponent $\theta$ but some restriction on the integrability exponent $r$, still unless $N$ is large. 
Similarly, a further restriction appeared in the proof of Theorem~\ref{thm_morrey2}, where we had to assume $p>1$ in order to apply Theorem~\ref{thm_vw}.
In all respects, the approach via Besov spaces at present seems to be more efficient.} \end{rem} 

\subsection{Refinements via Besov spaces}

As mentioned in the introduction, we can deduce the
improved Sobolev inequality in the Morrey scale given by~\eqref{eq_improved} from the improved Sobolev inequality in  \eqref{refsobolev} for $X=\dot{B}^{-N/2^\ast}_{\infty,\infty}$.
In order to do this, it will suffice to show that the Morrey spaces $\mathcal{L}^{1,\alpha}$ can be embedded in the Besov space $\dot{B}^{-\alpha}_{\infty, \infty}$ for any admissible $\alpha$.
\begin{lemma}\label{lem_10}
For any $\alpha \in (0,N)$ we have $\mathcal{L}^{1,\alpha} \hookrightarrow \dot{B}^{-\alpha}_{\infty, \infty}$.  More precisely, each function $u \in \mathcal{L}^{1,\alpha}$ is a tempered distribution and there exists a constant $C=C(N,\alpha)>0$
 such that
 \begin{equation}\label{eq_10star}
 \|u\|_{\dot{B}^{-\alpha}_{\infty,\infty}} \, \leq \, C \| u \|_{\mathcal{L}^{1,\alpha}},
 \end{equation}
 for any $u \in \mathcal{L}^{1,\alpha}(\R^N)$.
 \end{lemma}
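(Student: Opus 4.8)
The natural route is through the thermic (heat-kernel) characterization of $\dot B^{-\alpha}_{\infty,\infty}$ recalled in \eqref{def_besovnorm}, so the goal becomes the pointwise bound $t^{\alpha/2}\|P_t u\|_{L^\infty}\le C\|u\|_{\mathcal L^{1,\alpha}}$ uniformly in $t>0$. First I would fix $t>0$ and a point $x\in\R^N$ and write $P_t u(x)=\int_{\R^N} p_t(x-y)\,u(y)\,dy$ with the Gaussian kernel $p_t(z)=(4\pi t)^{-N/2}e^{-|z|^2/(4t)}$; one has to check along the way that this integral converges absolutely for $u\in\mathcal L^{1,\alpha}$, which will also show $u$ is a tempered distribution (a Morrey function has at most polynomial local averages, and the Gaussian decay of $p_t$ absorbs that).

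The main estimate is a dyadic annular decomposition. Set $R=\sqrt t$ and split $\R^N=\bigcup_{k\ge 0}A_k$ where $A_0=B_R(x)$ and $A_k=B_{2^kR}(x)\setminus B_{2^{k-1}R}(x)$ for $k\ge 1$. On $A_k$ the kernel obeys $|p_t(x-y)|\le C t^{-N/2}e^{-c4^{k}}$ (for $k\ge1$; for $k=0$ just $\le Ct^{-N/2}$), while the Morrey condition \eqref{def_morrey} with $r=1$ and exponent $\alpha$ gives
\begin{equation*}
\int_{A_k}|u(y)|\,dy\ \le\ \int_{B_{2^kR}(x)}|u(y)|\,dy\ \le\ C\,(2^kR)^{N-\alpha}\,\|u\|_{\mathcal L^{1,\alpha}}.
\end{equation*}
Multiplying and summing,
\begin{equation*}
|P_t u(x)|\ \le\ C\,t^{-N/2}\sum_{k\ge0} e^{-c4^{k}}(2^kR)^{N-\alpha}\,\|u\|_{\mathcal L^{1,\alpha}}
\ =\ C\,t^{-\alpha/2}\,\|u\|_{\mathcal L^{1,\alpha}}\sum_{k\ge0} 2^{k(N-\alpha)}e^{-c4^{k}},
\end{equation*}
where I used $R^{N-\alpha}=t^{(N-\alpha)/2}$ so that $t^{-N/2}R^{N-\alpha}=t^{-\alpha/2}$; the series converges since $0<\alpha<N$ forces $N-\alpha>0$ but the Gaussian factor dominates any geometric growth. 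Taking the supremum over $x$ and then over $t>0$ after multiplying by $t^{\alpha/2}$ yields \eqref{eq_10star}.

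The only genuinely delicate point is the bookkeeping that turns the scale-invariant Morrey bound into the exact power $t^{-\alpha/2}$ — i.e. making sure the exponent of $t$ coming from $t^{-N/2}$ and from $(2^kR)^{N-\alpha}=t^{(N-\alpha)/2}2^{k(N-\alpha)}$ combine correctly; this is precisely where the matching $\gamma/r=\alpha$ is used and where one sees that no other value of $\alpha$ would work. Everything else (absolute convergence, the tempered-distribution claim, passing from balls to the annuli $A_k$) is routine. I would also remark that the argument is reversible enough to suggest that $\mathcal L^{1,\alpha}$ is strictly larger than $\dot B^{-\alpha}_{\infty,\infty}$ does not embed back, but that is not needed here.
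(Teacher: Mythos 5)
Your proposal is correct and follows essentially the same route as the paper: the thermic characterization of $\dot{B}^{-\alpha}_{\infty,\infty}$, a dyadic decomposition around the point $x$ at scale $R=\sqrt t$ (the paper indexes its annuli over all of $\mathds{Z}$ rather than using a central ball plus outward annuli, but this is immaterial), the Morrey bound on each piece against the Gaussian decay of the kernel, and the same scale bookkeeping $t^{-N/2}R^{N-\alpha}=t^{-\alpha/2}$. The tempered-distribution claim, which you correctly flag as routine, is handled in the paper by the same dyadic argument applied to $\int |u(y)|(1+|y|^m)^{-1}\,dy$.
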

 \begin{proof}
 By definition, since $u \in \mathcal{L}^{1,\alpha}$ then for any $\sigma>0$  and any ball $B_\sigma \subset \R^N$ the following estimate holds
 \begin{equation}\label{eq_11stella}
 \dys
 \int_{B_\sigma} |u(y)|\, dy \,  \leq \, \|u\|_{\mathcal{L}^{1,\alpha}} \sigma^{N-\alpha} \, .
 \end{equation}
 First, we are going to show that, because of \eqref{eq_11stella}, $u$ is  a tempered distribution. Then, taking $R=\sqrt{t}$ in definition~\eqref{def_besovnorm} it will remain to check that
 \begin{equation}\label{eq_10star2}
 \sup_{R>0} R^\alpha \| e^{R^2\Delta}u \|_{L^\infty} 
 \, \leq
 \, c \sup_{R>0, \, x\in R^N} R^\alpha \dashint_{B_R} |u|\, dy,
 \end{equation}
 for any $u\in \mathcal{L}^{1,\alpha}(\R^N)$.

The main tool will be a dyadic decomposition in the domain of integration. To this aim, for any fixed $R>0$, \,$x\in \R^N$ and $k\in \Z$, we define
 $$
 \cc_k := \Big\{ 2^{-k}R < |x-y| < 2^{-k+1}R; \ y\in \R^N \Big\} \, .
 $$

In order to prove that $u$ defines a tempered distribution, it is clearly enough to show that 
\begin{equation}
\label{tempd}
\int_{\R^N} |u(y)| |\phi(y)| dy \leq C \|u\|_{\mathcal{L}^{1,\alpha}} \, \sup_{y} |\phi(y)|(1+|y|^{m})
\end{equation}
for some integer $m \geq 1$, some absolute constant $C>0$ and for all $\phi \in \mathcal{S}(\R^N)$.

Since
\begin{equation}
\label{tempd1}
\int_{\R^N} |u(y)| |\phi(y)| dy\leq \left( \sup_{y} |\phi(y)|(1+|y|^{m}) \right) \int_{\R^N}\frac{|u(y)|}{1+|y|^m}dy \, ,
\end{equation}
we just need to estimate the last integral. 

Assuming $x=0$ and combining the dyadic decomposition  
 with  \eqref{eq_11stella} we have
\begin{eqnarray}\label{tempd2}
 \int_{\R^N}\frac{|u(y)|}{1+|y|^m}dy
&= & \sum_{k \in \mathds{Z}} \int_{\cc_k}\frac{|u(y)|}{1+|y|^m}dy 
\nonumber \\
 &\leq & \sum_{k \in \mathds{Z}}\frac{C}{1+2^{-mk}} 
  \int_{B_{2^{-k+1}}} |u(y)| dy \leq C \|u\|_{\mathcal{L}^{1,\alpha}} \sum_{k \in \mathds{Z}}\frac{2^{k(\alpha-N)}}{1+2^{-mk}} \, ,
  \end{eqnarray}
where the last sum is clearly convergent for any fixed $m\geq N$.
Combining \eqref{tempd1} with \eqref{tempd2} we see that \eqref{tempd} holds, hence $u\in \mathcal{S}^\prime$ as claimed.

To finish the proof we note that
 $$
 \|  e^{R^2\Delta}u\|_{L^\infty}
 \,  \leq \,  \|  e^{R^2\Delta}|u| \|_{L^\infty} \, ,
 $$
 therefore in proving \eqref{eq_10star2} we may assume that $u$ is nonnegative.
 
 \vspace{1mm}
 
 For $x\in \R^N$ and $R>0$ we set 
 $$
 K_R (x-y):=   \frac{1}{(4\pi)^{\frac{N}{2}}} \frac{1}{R^N} e^{-\frac{|x-y|^2}{4R^2}} \, , \qquad   G_R(x):= e^{R^2\Delta}u \, (x)=\int_{\R^N}  K_R(x-y) u(y)\, dy \, ,
 $$
 thus, our goal is to prove that $G_R(x)  \leq \, c \|u\|_{\mathcal{L}^{1,\alpha}} R^{-\alpha}
 $
 for any $x\in \R^N$ and $R>0$ for an absolute constant $c>0$.
  
  Using again the dyadic decomposition we can write
 \begin{equation}\label{eq_12stella}
 \dys
 G_R(x) = \sum_{k\in\Z} \int_{C_k}K_R(x-y)u(y)\, dy \, .
 \end{equation}
 Since on each annulus 
 we have $|x-y| \sim 2^{-k} R$, then we easily obtain 
\begin{eqnarray*}
\int_{\cc_{k}} K_R(x-y) u(y)\, dy
 & \leq  & c \frac{1}{R^N} e^{-c2^{-2k}} \int_{B_{2^{-k+1}R}} u(y)\,dy \\[1ex]
 & \leq & c   \|u\|_{\mathcal{L}^{1,\alpha}} R^{-N} e^{-c2^{-2k}} (2^{-k}R)^{N-\alpha},
 \end{eqnarray*}
 where we also used~\eqref{eq_11stella} by choosing $\sigma=\sigma_k\sim 2^{-k}R$ there. Now, we combine the inequality above with~\eqref{eq_12stella} 
  and we obtain
 \begin{equation}\label{eq_13stella2}
\dys
G_R(x) \leq c   \|u\|_{\mathcal{L}^{1,\alpha}}\left(\sum_{k\in\Z} 2^{-k(N-\alpha)} e^{-c2^{-2k}}\right)R^{-\alpha}
\, \leq \, c  \|u\|_{\mathcal{L}^{1,\alpha}}  R^{-\alpha},
\end{equation}
where we used that the sum in~\eqref{eq_13stella2} converges. 
Since the previous inequality holds for any $x \in \R^N$, we conclude that \eqref{eq_10star} holds, so the proof is complete.
 \end{proof}

 Now, we are in the position to provide another proof of the improved Sobolev inequality in the Morrey scale given by Theorem~\ref{thm_morrey1}.
 \begin{proof}[\bf Proof of Theorem~\ref{thm_morrey1}]
 In view of the previous results in this section,  when $\theta=2/2^\ast$ the inequality in~\eqref{eq_improved} is straightforward, by just combining the embedding \eqref{refsobolev} for $X=\dot{B}^{-N/2^\ast}_{\infty,\infty}$ proved in \cite{GMO97} with Lemmas~\ref{lem_10} and inequality \eqref{lem_8} (choosing $\alpha=(N-2s)/2$ and $\gamma=r(N-2s)/2$ respectively). Then the conclusion follows for the whole range $\theta \in [2/2^\ast, 1)$ just because
 $\dot{H}^s \hookrightarrow L^{2^\ast} \hookrightarrow \mathcal{L}^{r,r(N-2s)/2}$.
 \end{proof}

\vspace{2mm}

Analogously, for the case $s=1$ and $p\neq 2$, we have 

\begin{proof}[\bf Proof of Theorem~\ref{thm_morrey2}]
We can combine the results in this section with the refined Sobolev embedding for functions in $\dot{W}^{1,p}$ proved by Ledoux in~\cite{Led03}, that is
$$
\|u\|_{L^{p^\ast}}  \leq \, C \, \|u\|_{\dot{W}^{1,p}}^\theta \|u\|^{1-\theta}_{\dot{B}^{-(N-p)/p}_{\infty,\infty}}\,,
$$
where $p/p^\ast \leq \theta<1$, with Lemmas~\ref{lem_10} and inequality \eqref{lem_8} (choosing $\alpha=N/p^\ast$  and  $\gamma=r \alpha$ respectively) and we plainly obtain an alternative proof of~Theorem~\ref{thm_morrey2}.
\end{proof}
\vspace{1mm}

\section{Optimizing sequences for the Sobolev inequality}\label{sec_maxi}

As already mentioned in the Introduction, a first application of the improved Sobolev inequalities is the following lemma, which states that an appropriate scaling $\{x_n, \lambda_n \}$ will assure a nontrivial weak-limit 
of any sequence $\{u_n\}$ in $\dot{H}^s$ uniformly bounded from below in the Lebesgue $L^{2^\ast}$-norm.  
\begin{lemma}\label{lem_nontrivial}
Let $0 < s < N/2$ and let $\{ u_n\} \subset \dot{H}^s(\R^N)$ a bounded sequence such that
\begin{equation}\label{eq_25stella}
\dys
\inf_{n\in \N} \|u_n\|_{L^{2^\ast}} \geq c > 0.
\end{equation}
Then, up to subsequences, there exist a family of points $\{x_n\}\subset \R^N$ and a family of positive numbers $\{\lambda_n\}\subset (0,\infty)$ such that
$$
\tilde{u}_n \rightharpoonup w \neq 0 \quad \text{in} \ \dot{H}^s(\R^N),
$$
where $\tilde{u}_n(x):=\lambda_n^{\frac{N-2s}{2}}u_{n}\left(x_n+\lambda_nx  \right)$.
\end{lemma}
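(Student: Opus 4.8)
The strategy is to exploit the improved Sobolev inequality \eqref{eq_improved} with $r=1$ in the borderline case $\theta = 2/2^\ast$, which reads
\[
\|u_n\|_{L^{2^\ast}} \,\leq\, C\,\|u_n\|_{\dot{H}^s}^{2/2^\ast}\,\|u_n\|_{\mathcal{L}^{1,(N-2s)/2}}^{1-2/2^\ast}.
\]
Since $\{u_n\}$ is bounded in $\dot{H}^s$ and $\|u_n\|_{L^{2^\ast}}\geq c>0$ by \eqref{eq_25stella}, this forces a uniform lower bound $\inf_n \|u_n\|_{\mathcal{L}^{1,(N-2s)/2}} \geq c' > 0$. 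By the very definition \eqref{def_morrey} of the Morrey norm, for each $n$ there exist a radius $\lambda_n > 0$ and a center $x_n \in \R^N$ such that
\[
\lambda_n^{(N-2s)/2}\,\dashint_{B_{\lambda_n}(x_n)} |u_n|\,dy \,\geq\, \tfrac{c'}{2} \,>\, 0.
\]
First I would record this selection of $\{x_n,\lambda_n\}$ and set $\tilde u_n(x) = \lambda_n^{(N-2s)/2} u_n(x_n+\lambda_n x)$, noting that by the scaling invariance of both the $\dot H^s$-norm and the Morrey norm $\mathcal{L}^{1,(N-2s)/2}$ (recorded in Section~\ref{sec_morrey}) the sequence $\{\tilde u_n\}$ is still bounded in $\dot H^s$ and the rescaled inequality localizes to the unit ball:
\[
\dashint_{B_1(0)} |\tilde u_n(y)|\,dy \,=\, \lambda_n^{(N-2s)/2}\,\dashint_{B_{\lambda_n}(x_n)} |u_n|\,dy \,\geq\, \tfrac{c'}{2}.
\]

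Next, since $\{\tilde u_n\}$ is bounded in $\dot H^s(\R^N)$, up to a subsequence it converges weakly in $\dot H^s$ to some $w$; I would then invoke the compact embedding $\dot H^s(\R^N)\hookrightarrow L^2_{\mathrm{loc}}(\R^N)$ (equivalently, the compactness of $H^s(B_2)\hookrightarrow L^1(B_1)$, a consequence of Rellich--Kondrachov applied on the bounded ball $B_2$), to pass to strong convergence $\tilde u_n \to w$ in $L^1(B_1)$ along a further subsequence. Passing to the limit in the displayed lower bound yields
\[
\dashint_{B_1(0)} |w(y)|\,dy \,\geq\, \tfrac{c'}{2} \,>\, 0,
\]
so $w \neq 0$, which is exactly the claim.

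The only genuinely delicate point is the compactness step: one must make sure the local embedding $\dot H^s \hookrightarrow L^1_{\mathrm{loc}}$ is actually available, and here $\dot H^s(\R^N)$ is naturally a subspace of $L^{2^\ast}(\R^N)$ by \eqref{def_hs0}, so a bounded sequence in $\dot H^s$ is bounded in $H^s(B_2)$ and the classical Rellich--Kondrachov theorem on the bounded domain $B_2$ gives the required $L^1(B_1)$-compactness (indeed even $L^2(B_1)$-compactness). A minor bookkeeping point is that the radii $\lambda_n$ and centers $x_n$ are genuine choices extracted from the supremum defining the Morrey norm — it suffices that they realize the norm up to a factor $2$, which is all the definition \eqref{def_morrey} guarantees via the supremum. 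Everything else is the scaling algebra already set up in Section~\ref{sec_morrey} together with \eqref{eq_improved}, so the proof is short once these ingredients are assembled.
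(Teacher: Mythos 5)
Your proposal is correct and follows essentially the same route as the paper: a uniform lower bound on a scale-invariant Morrey norm via Theorem~\ref{thm_morrey1}, selection of near-optimal balls $B_{\lambda_n}(x_n)$ from the supremum in \eqref{def_morrey}, rescaling, and the compact embedding $\dot{H}^s\hookrightarrow L^2_{\mathrm{loc}}$ to show the weak limit is nontrivial. The only (immaterial) difference is that you take $r=1$ and conclude from an $L^1(B_1)$ lower bound, whereas the paper takes $r=2$ and works with $\mathcal{L}^{2,N-2s}$, obtaining the lower bound directly in $L^2(B_1)$.
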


\begin{proof}
Since the sequence is bounded in $\dot{H}^s$ and $\dot{H}^s \hookrightarrow L^{2^\ast} \hookrightarrow \mathcal{L}^{2,N-2s}$, we have $\|u_n\|_{\mathcal{L}^{2,N-2s}} \leq C$ for some $C>0$ independent of $n$. On the other hand, combining assumption \eqref{eq_25stella} with Theorem \ref{thm_morrey1} for $r=2$, we readily see that $\|u_n\|_{\mathcal{L}^{2,N-2s}} \geq \tilde{C}$ for some $\tilde{C}>0$ independent of $n$. Hence, there exists a positive constant which we denote again by $C$ such that for any $n$ we have 
\begin{equation}\label{eq_26stella2}
\dys
C \, \leq \, \|u_n\|_{\mathcal{L}^{2,N-2s}} \, \leq \, C^{-1} \, .
\end{equation}

Combining the definition~\eqref{def_morrey} with \eqref{eq_26stella2}, we deduce that for any $n\in \N$ there exist $x_n, \lambda_n$ such that
\begin{equation}\label{eq_27stella}
\lambda_n^{-2s}\int_{B_{\lambda_n}(x_n)}|u_n(y)|^2 dy \, \geq \, \|u_n\|^2_{\mathcal{L}^{2,N-2s}} - \frac{C^2}{2n}
\, \geq \, \tilde{C}>0,
\end{equation}
for any $n$ and for some constant $\tilde{C}>0$.

Now we set $\tilde{u}_n(x):=\lambda_n^{\frac{N-2s}{2}}u_{n}\left(x_n+\lambda_n x \right)$ and in view of the scaling invariance of the $\dot{H}^s$ norm, the sequence $\tilde{u}_n$ is bounded in $\dot{H}^s$, therefore up to subsequences  $\tilde{u}_n \rightharpoonup w  \quad \text{in} \ \dot{H}^s(\R^N)$ and it remains to prove that $w \neq 0$.

Starting from \eqref{eq_27stella}, a direct calculation gives
$$
\int_{B_1} |\tilde{u}_n(x)|^2 dx = \lambda_n^{-2s}\int_{B_{\lambda_n}(x_n)}|u_n(y)|^2 dy \geq \tilde{C} >0 \, , 
$$
and since the embedding $\dot{H}^s \hookrightarrow L^2_{\rm{loc}}$ is compact, passing to the limit in the previous inequality we obtain $\int_{B_1}|w(x)|^2 dx \geq \tilde{C}>0$. Thus $w \neq 0$ and the proof is complete.
\end{proof}

We would like to remark that the improved Sobolev inequality in Morrey spaces $\mathcal{L}^{r, r \frac{N-2s}{2}}$ could be replaced by the analougue \eqref{refsobolev} with $X=\dot{B}^{s-N/2}_{\infty,\infty}$, with only minor changes in the present proof.

\vspace{1mm}

Now, combining Lemma~\ref{lem_nontrivial} together with the result in Brezis-Lieb (\cite{brezislieb}), 
 one can finally deduce that the optimal sequences in the critical Sobolev inequality~\eqref{eq_sobolev} are compact up to translations and dilations. 
 
\vspace{2mm}

\begin{proof}[\bf Proof of Theorem \ref{the_optimizseq}]
Let $\{ u_n\} \subset \dot{H}^s(\mathds{R}^N)$ be an optimal sequence for the Sobolev inequality \eqref{pbsobolev}, i.~\!e. a sequence such that $\| u_n \|_{\dot{H}^s}=1$ for each $n$ and $\int |u_n|^{2^{\ast}}dx \to S^{\ast}$ as $n\to \infty$. We aim to show that there exists a suitably rescaled subsequence converging strongly in $\dot{H}^s$ to a function $w \in \dot{H}^s(\mathds{R}^N)$. 

In view of Lemma~\ref{lem_nontrivial} applied to the optimal sequences $\{u_n\} \subset \dot{H}^s(\R^N)$, we have that the renumbered rescaled sequence $\tilde{u}_{n} \rightharpoonup w \neq 0$ in $\dot{H}^s(\R^N)$. It remains to show that $\tilde{u}_n \to w$ strongly in $\dot{H}^s$ and that $w$ is given by \eqref{def_talentiana}. This fact will follow from the optimality of the sequence $\{u_n\}$.

\vspace{1mm}

Note that the weak convergence $\tilde{u}_n\rightharpoonup w$ in $\dot{H}^s$ yields the identity
\begin{eqnarray}\label{eq_adriano}
&& \int_{\R^N} |(-\Delta)^{\frac{s}{2}} w|^2 dx +\limsup_{n \to\infty } \int_{\R^N} |(-\Delta)^{\frac{s}{2}} (\tilde{u}_n-w) |^2 dx \nonumber  \\
&& \qquad \qquad\qquad \qquad\qquad \qquad = \limsup_{n\to \infty}  \int_{\R^N} |(-\Delta)^{\frac{s}{2}} \tilde{u}_n|^2 dx =1. 
\end{eqnarray}
Since the embedding $\dot{H}^s\hookrightarrow L^2_{\rm{loc}}$ is compact, passing to a further subsequence if necessary we may also assume $\tilde{u}_n \to w$ a.~\!e..

Combining \cite{brezislieb} with Sobolev inequality \eqref{eq_sobolev}, the identity in~\eqref{eq_adriano} and the elementary inequality for positive numbers $a^{2^\ast/2}+b^{2^\ast/2} \leq (a+b)^{2^\ast/2}$, we have
\begin{eqnarray*}
S^{\ast} \! & = & \! \lim_{n \to\infty} \int_{\R^N} |\tilde{u}_n|^{2^{\ast}}dx \ = \ 
 \int_{\R^N} |w|^{2^{\ast}}dx+\lim_{n \to \infty}\int_{\R^N} |\tilde{u}_n-w|^{2^{\ast}}dx \\
\\
& \leq & \! S^{\ast} \left( \int_{\R^N} |(-\Delta)^{\frac{s}{2}} w|^2 dx \right)^{\!\frac{2^{\ast}}{2}}+S^{\ast} \left( \limsup_{n \to\infty }\int_{\R^N} |(-\Delta)^{\frac{s}{2}} (\tilde{u}_n-w) |^2 dx \right)^{\!\frac{2^{\ast}}{2}} \\
\\
& \leq & \! S^{\ast} \left( \int_{\R^N} |(-\Delta)^{\frac{s}{2}} w|^2 dx +\limsup_{n \to\infty } \int_{\R^N} |(-\Delta)^{\frac{s}{2}} (\tilde{u}_n-w) |^2 dx \right)^{\!\frac{2^{\ast}}{2}}  = \ S^{\ast}.
\end{eqnarray*}
Since all the previous inequalities are equalities, we infer  $\| w\|_{\dot{H}^s}=1$, because $w\not\equiv 0$ and hence $\tilde{u}_n \to w$ in $\dot{H}^s$. By Sobolev embedding \eqref{eq_sobolev} we also deduce $\tilde{u}_n \to w$ in $L^{2^{\ast}}$, hence $\int|w|^{2^{\ast}}dx=S^{\ast}$ and $w$ is an optimal function in the Sobolev inequality~\eqref{pbsobolev}, thus $w$ is given by \eqref{def_talentiana} in view of \cite[Theorem 1.1]{cotsiolis}.
\end{proof}

\vspace{1mm}

\section{Profile decomposition for arbitrary sequences in $\dot{H}^s$}\label{sec_decomposition}

In this section, we investigate the  {\it profile decomposition} for sequences of functions in the fractional Sobolev spaces~$\dot{H}^s(\R^N)$ by means of a careful analysis via  weak-convergence with respect to dilation and translation (the so-called $D$-weak convergence, where $D$ stands for ``dislocations''; see below). 
As mentioned in the introduction, we follow an abstract approach mainly due to Tintarev, contained in the book~\cite[Chapter 3]{TF07}.
 Also, in \cite[Chapters 5 and 6]{TF07}
 it has been also discussed the application to Sobolev spaces $\dot{H}^s(\R^N)$ when $s$ is an integer.
 Here, thanks to the improved Sobolev inequalities proved in the Section~\ref{sec_improved}, 
 we show how this abstract point of view can be easily applied
to the full range of  fractional Sobolev spaces~$\dot{H}^s(\R^N)$ for any real $0 < s < N/2$. Our presentation here differs slightly from \cite{TF07}: in particular we identify $D$ with the group $G$ of affine homogeneous dilation in $\mathds{R}^N$ and we take advantage of the corresponding explicit group structure. In our opinion such aspect, not present in \cite{Ger98} and not clearly used in \cite[Chapter 5]{TF07}, could be relevant for other concrete situations when different groups $G$ act on some function spaces with Hilbertian structure.

\vspace{2mm}

We start with a direct consequence of Lemma~\ref{lem_nontrivial}, which states that 
a characterization 
of the weak-convergence up to dilation and translation in $\dot{H}^s(\R^N)$ is precisely $L^{2^\ast}\!$-convergence. This fact is in clear accordance with the local case  $s=1$, as already proved in \cite[Lemma 5.3]{TF07}
and it shows that absence of profiles can be measured in the $L^{2^\ast}\!$-norm.
 We have

\begin{prop}\label{prop_sdconv}
Let $\{u_n\}$ be any bounded sequence in $\dot{H}^s(\R^N)$, then the following statements are equivalent
\begin{itemize}
\item[{\rm (i)}]{
For any $\{x_n\} \subset \R^N$ and any $\{\lambda_n\} \subset (0,\infty)$,
$$
\dys
\tilde{u}_n (\cdot) := \lambda_n^{\frac{N-2s}{2}}u_{n}(x_n+\lambda_n\,\cdot) \rightharpoonup \, 0 \ \, \text{in} \ \dot{H}^s(\R^N) \  \, \text{as} \ {n\to \infty}.
$$
}
\item[{\rm(ii)}]{
$\{ u_n \}$ converges strongly to $0$ in $L^{2^\ast}\!(\R^N)$. 
}
\end{itemize}
\end{prop}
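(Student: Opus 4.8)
The plan is to prove the two implications separately, with the nontrivial direction (ii)$\Rightarrow$(i) being essentially a restatement of Lemma~\ref{lem_nontrivial} combined with the dilation/translation invariance of the relevant norms.

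\medskip

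\emph{Direction (i)$\Rightarrow$(ii).} First I would argue by contradiction: suppose $\{u_n\}$ does not converge to $0$ in $L^{2^\ast}$. Then, passing to a subsequence, we may assume $\inf_n \|u_n\|_{L^{2^\ast}} \geq c > 0$. Since $\{u_n\}$ is bounded in $\dot H^s$, Lemma~\ref{lem_nontrivial} applies and produces scalings $\{x_n\}\subset\R^N$, $\{\lambda_n\}\subset(0,\infty)$ such that (along a further subsequence) $\tilde u_n(\cdot) := \lambda_n^{(N-2s)/2} u_n(x_n+\lambda_n\,\cdot) \rightharpoonup w \neq 0$ in $\dot H^s(\R^N)$. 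This directly contradicts (i) applied to exactly this choice of $\{x_n\},\{\lambda_n\}$ (weak limits are unique, and a subsequence of a sequence satisfying (i) still satisfies (i)). Hence (ii) holds. This direction is short and the only subtlety is bookkeeping with subsequences.

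\medskip

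\emph{Direction (ii)$\Rightarrow$(i).} Fix arbitrary sequences $\{x_n\}\subset\R^N$ and $\{\lambda_n\}\subset(0,\infty)$ and set $\tilde u_n(\cdot) = \lambda_n^{(N-2s)/2} u_n(x_n+\lambda_n\,\cdot)$. By the scaling invariance of the $\dot H^s$-norm under \eqref{def_funzioni}, $\{\tilde u_n\}$ is bounded in $\dot H^s(\R^N)$, so up to a subsequence $\tilde u_n \rightharpoonup v$ for some $v\in\dot H^s(\R^N)$; I must show $v=0$, and a standard subsequence argument (every subsequence has a further subsequence with weak limit $0$) then gives the full statement. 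Now use the invariance of the $L^{2^\ast}$-norm: $\|\tilde u_n\|_{L^{2^\ast}} = \|u_n\|_{L^{2^\ast}} \to 0$ by (ii). Since the embedding $\dot H^s\hookrightarrow L^{2^\ast}$ is continuous and $L^{2^\ast}$-convergence to $0$ combined with $\tilde u_n\rightharpoonup v$ in $\dot H^s$ forces (via, e.g., the compact embedding $\dot H^s\hookrightarrow L^2_{\mathrm{loc}}$ and a.e.\ convergence along a further subsequence) $v=0$ a.e. More directly: test the weak convergence $\tilde u_n\rightharpoonup v$ in $\dot H^s$ against any $\varphi\in C_0^\infty(\R^N)$; on the other hand $\tilde u_n\to 0$ in $L^{2^\ast}$ implies $\tilde u_n\to 0$ in $\mathcal{D}'(\R^N)$, so $v=0$ as a distribution, hence as an element of $\dot H^s$.

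\medskip

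The main obstacle, such as it is, lies entirely in the direction (ii)$\Rightarrow$(i): one must be careful that $v$, which \emph{a priori} is only a weak $\dot H^s$ limit, is identified as $0$ — this requires matching the two topologies ($\dot H^s$-weak and $L^{2^\ast}$-strong) on a common space of distributions, which is where the continuity of $\dot H^s\hookrightarrow L^{2^\ast}$ and the local compactness $\dot H^s\hookrightarrow L^2_{\mathrm{loc}}$ do the work. The direction (i)$\Rightarrow$(ii) is genuinely easy once Lemma~\ref{lem_nontrivial} is in hand — indeed that is the whole point of having proven the improved Sobolev inequality — so I expect the proof to be only a few lines, essentially: contradiction $+$ invariance of norms $+$ Lemma~\ref{lem_nontrivial} $+$ uniqueness of weak limits.
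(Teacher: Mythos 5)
Your proposal is correct and follows essentially the same route as the paper: (i)$\Rightarrow$(ii) by contradiction via Lemma~\ref{lem_nontrivial}, and (ii)$\Rightarrow$(i) via the scaling invariance of the $L^{2^\ast}$-norm together with the continuity of the embedding $\dot{H}^s \hookrightarrow L^{2^\ast}$ to identify the weak limit as zero. Your version merely spells out the identification of the weak limit (via distributional convergence and the subsequence argument) in more detail than the paper's one-line statement.
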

\begin{proof}
The first implication is now straightforward thanks to Lemma~\ref{lem_nontrivial}. Indeed, assume by contradiction that $u_n \nrightarrow 0$ in $L^{2^\ast}(\R^N)$, then we have that, up to subsequences, there exist $\bar{x}_n$ and $\bar{\lambda}_n$ such that $\bar{\lambda}_n^{\frac{N-2s}{2}} u_{n}(\bar{x}_n +\bar{\lambda}_n\,\cdot)$ weakly converges to a function $w\neq 0$. Contradiction.
\vspace{2mm}

Assume now that $u_n\rightarrow 0$ strongly in $L^{2^\ast}(\R^N)$ as $n\to \infty$. Then, for any $\{x_n\}$ and $\{\lambda_n\}$, by scaling invariance, it also holds $\|\tilde{u}_n\|_{L^{2^\ast}} \rightarrow 0$ as $n\to \infty$, so that, by continuity of the Sobolev embedding~\eqref{eq_sobolev}, it follows $\tilde{u}_n \rightharpoonup 0$ in $\dot{H}^s(\R^N)$ as $n\to \infty$, as desired. 
\end{proof}

The following result shows that the previous proposition can be improved using a Morrey norm weaker than $L^{2^\ast}$ to characterize absence of profiles.

\begin{corollary}
\label{nofprofiles}
Let $\{u_n\}$ be any bounded sequence in $\dot{H}^s(\R^N)$. For fixed $1\leq r < 2^\ast$ the following statements are equivalent
\begin{itemize}
\item[{\rm (i)}]{
For any $\{x_n\} \subset \R^N$ and any $\{\lambda_n\} \subset (0,\infty)$,
$$
\dys
\tilde{u}_n(\cdot) := \lambda_n^{\frac{N-2s}{2}}u_{n}(x_n+\lambda_n\,\cdot) \rightharpoonup \, 0 \ \, \text{in} \ \dot{H}^s(\R^N) \  \, \text{as} \ {n\to \infty}.
$$
}
\item[{\rm(ii)}]{
$\{ u_n \}$ converges strongly to $0$ in $\mathcal{L}^{r, r\frac{N-2s}{2}}(\R^N)$ as $n \to \infty$. 
}
\end{itemize}
\end{corollary}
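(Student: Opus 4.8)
The plan is to reduce the whole equivalence to Proposition~\ref{prop_sdconv} together with the refined inequality \eqref{eq_improved}, exploiting the fact that the Morrey norm $\|\cdot\|_{\mathcal{L}^{r,r(N-2s)/2}}$ is sandwiched between the $\dot H^s$-norm and the $L^{2^\ast}$-norm. So the first thing I would observe is that, by Proposition~\ref{prop_sdconv}, statement (i) is equivalent to $\|u_n\|_{L^{2^\ast}}\to 0$; hence it suffices to show that, for a sequence bounded in $\dot H^s$, strong convergence to $0$ in $L^{2^\ast}$ and strong convergence to $0$ in $\mathcal{L}^{r,r(N-2s)/2}$ are equivalent.

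For (i)$\Rightarrow$(ii) I would simply invoke the elementary embedding $L^{2^\ast}\hookrightarrow\mathcal{L}^{r,r(N-2s)/2}$ recalled in \eqref{eq_6star}: once $\|u_n\|_{L^{2^\ast}}\to 0$ (via Proposition~\ref{prop_sdconv}), one gets $\|u_n\|_{\mathcal{L}^{r,r(N-2s)/2}}\le C\|u_n\|_{L^{2^\ast}}\to 0$, which is (ii). For the converse (ii)$\Rightarrow$(i) I would use the standing boundedness of $\{u_n\}$ in $\dot H^s$, say $\|u_n\|_{\dot H^s}\le M$, and apply Theorem~\ref{thm_morrey1} with the given $r$ and some fixed $\theta\in[2/2^\ast,1)$:
\[
\|u_n\|_{L^{2^\ast}}\ \le\ C\,\|u_n\|_{\dot H^s}^{\theta}\,\|u_n\|_{\mathcal{L}^{r,r(N-2s)/2}}^{1-\theta}\ \le\ C\,M^{\theta}\,\|u_n\|_{\mathcal{L}^{r,r(N-2s)/2}}^{1-\theta}\ \longrightarrow\ 0 .
\]
Then Proposition~\ref{prop_sdconv} applied once more converts $\|u_n\|_{L^{2^\ast}}\to 0$ back into (i).

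I do not expect a genuine obstacle here, since all the analytic content has already been packaged into Proposition~\ref{prop_sdconv} (whose nontrivial direction rests on Lemma~\ref{lem_nontrivial}) and into the improved Sobolev inequality of Theorem~\ref{thm_morrey1}. The only point deserving a line of care is that the passage from the weaker Morrey norm back to $L^{2^\ast}$ genuinely needs the uniform $\dot H^s$-bound to be retained in the estimate — dropping it would make the exponent $\theta$ useless — but this is precisely the hypothesis that $\{u_n\}$ is bounded in $\dot H^s$. Thus the corollary is essentially a repackaging of the two ingredients above, and the proof will be correspondingly short.
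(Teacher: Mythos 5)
Your argument is correct and coincides with the paper's own proof: (i)$\Rightarrow$(ii) via Proposition~\ref{prop_sdconv} and the embedding \eqref{eq_6star}, and (ii)$\Rightarrow$(i) via Theorem~\ref{thm_morrey1} combined with the uniform $\dot H^s$-bound, then Proposition~\ref{prop_sdconv} again. Nothing is missing.
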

\begin{proof}
Clearly (i) implies (ii) as a consequence of the previous proposition and \eqref{eq_6star}.  Conversely, if (ii) holds, then $u_n \to 0$ in $L^{2^\ast}$ in view of Theorem \ref{thm_morrey1}. Then, the conclusion  again follows from the previous proposition.
\end{proof}

Actually, the statement in~$\rm (i)$ in Proposition \ref{prop_sdconv} above is the definition of the 
{\it $D$-weak convergence in $\dot{H}^s(\R^N)$} to zero. Whereas we will not go into detail of the $D$-weak convergence framework for general Hilbert space (referring the interested readers to \cite[Chapter 3]{TF07}), in the following lines we will describe how to equip the fractional Sobolev spaces~$\dot{H}^s(\R^N)$ with an appropriate group of {\it dislocations} $D$. This step is necessary in order to apply the abstract profile decomposition, namely the Refined Banach-Alaoglu Theorem on dislocation spaces~\cite[Theorem 3.1]{TF07}, in turn implying the desired profile decomposition in $\dot{H}^s(\R^N)$.

\vspace{2mm}

Let $T \subset \mathcal{U}(\dot{H}^s(\R^N))$ the group of all unitary operator on $\dot{H}^s(\R^N)$ induced by translation on $\mathds{R}^N$, i.~\!e.
$$
\dys
T:= \Big\{ T_y   \, : \,  T_y u(x):=u(x-y)  \quad  \forall \, u \in  \dot{H}^s \, ; \, y\in \R^N \,  \Big\},
$$
and by $S \subset \mathcal{U}(\dot{H}^s(\R^N))$ the group of all unitary operator on $\dot{H}^s(\R^N)$ induced by dilations on $\mathds{R}^N$, i.~\!e. 
$$
\dys
S:= \Big\{ S_\lambda  \, : \,  S_\lambda u(x):= \lambda^{\frac{2s-N}{2}}u( \lambda^{-1}\, x) \quad  \forall \, u \in  \dot{H}^s \, ; \,  \lambda>0 \,  \Big \}.
$$
Now, we consider the family of operators $D$ given by the composition of elements 
 of the preceding groups (actually the semidirect product of $T$ and $S$); i.~\!e., we define $D \subset \mathcal{U}(\dot{H}^s(\R^N))$ as 
 \begin{equation}\label{def_di}
 D:= \Big \{ D_{y,\lambda} \, : \,  D_{y,\lambda}u(x):=\lambda^{\frac{2s-N}{2}} u\left(\frac{x-y}{\lambda}\right) \quad  \forall \, u \in  \dot{H}^s \, ; \, y\in \R^N \, , \lambda>0 \, \Big\}.
 \end{equation}
Notice that the map 
$\R^N \times(0,\infty)\to \mathcal{U}(\dot{H}^s(\R^N))$ defined above
is continuous in the strong topology of $\mathcal{U}(\dot{H}^s(\R^N))$; we have that, for any sequence~$(y_n, \lambda_n)$ converging to $(y,\lambda)$, 
\begin{equation}\label{eq_34stella}
\dys
D_{y_n,\lambda_n}u \rightarrow D_{y,\lambda}u   \quad \forall u \in \dot{H}^s(\R^N) \ \text{as} \ n\to \infty.
\end{equation}
This can be checked on smooth functions $u\in C^\infty_0(\R^N)$, via Fourier transform, using the Dominated Convergence Theorem and then arguing by density.

\vspace{1.5mm}

Now, let us focus on $G= \R^N\rtimes (0,\infty)$ as a semi-direct product of groups $\mathds{R}^N$ and~$(0,\infty)$. As a set, $G=\R^N\times (0,\infty)$ is naturally endowed with the distance 
\begin{equation}
\label{distance}
d((x,\lambda), (y,\sigma))=\left| \log \frac{\lambda}{\sigma}\right| +| x-y | \,  \, , 
\end{equation}
which makes it a complete metric space. It is easy to check that it induces the usual Euclidean topology with the usual notion of convergent sequences.
It is very convenient to identify $G$
with the set of homogeneous affine transformations acting on $\mathds{R}^N$, i.~\!e. 
$$
\dys
\R^N\rtimes(0,\infty) \ni (y,\lambda) \longleftrightarrow
\varphi_{y,\lambda}: x\mapsto \frac{x-y}{\lambda} \, , \quad \varphi_{y,\lambda} \in {\it Aff} (\mathds{R}^N) \, ,
$$
whence $G$ inherits a group structure just by composition of the corresponding affine transformation; more precisely, since
$\varphi_{a,\delta}\circ \varphi_{y,\lambda}(x) = {(x-(y+a\lambda))}/{\delta\lambda}$, and therefore,
\begin{equation}\label{eq_40stella}
\dys
\varphi_{a,\delta}\circ \varphi_{y,\lambda} \equiv 
\varphi_{y+a\lambda,\delta\lambda}  \ \ \forall a,y\in \R^N, \ \forall \delta,\lambda \in (0,\infty),
\end{equation}
then we define the group law on $G=\mathds{R}^N \rtimes (0,\infty)$ by setting
\begin{equation}
\label{grouplaw}
(y,\lambda) \circ (a,\delta) := (y+\lambda a, \lambda \delta) \, , \qquad (y,\lambda), \, (a,\delta) \in G \, .
\end{equation}

Therefore, denoting by $\pi:\R^N\rtimes(0,\infty) \rightarrow \mathcal{U}(\dot{H}^s(\R^N))$ the following map
\begin{equation}\label{def_pi}
\pi(y,\lambda) = D_{y,\lambda},
\end{equation}
one can check that it is a group homomorphism (i.~\!e., unitary representation); also, $\pi$~is injective and strongly continuous in view of~\eqref{eq_34stella}. In this way, we have $D=\pi(G)$ where $G=\R^N\rtimes(0,\infty)$ and $D$ is given by~\eqref{def_di}.

\vspace{2mm}

The following results show how translations and dilations act on the space $\dot{H}^s$. In particular, we first characterize how a given function becomes asymptotically orthogonal in $\dot{H}^s$ with respect  to any fixed function under a sequences of scaling going to infinity on the group $G$. We have the following
\begin{lemma}\label{lemma1}
Take any sequence $\{(y_n,\lambda_n)\} \subset \R^N\times(0,\infty)$. Then the following statements are equivalent as  $n \to \infty$
\begin{itemize}
\item[(i)]{
$D_{y_n,\lambda_n} v\rightharpoonup 0  \, , \quad \forall  v \in \dot{H}^s(\R^N).$
}\vspace{1mm}
\item[(ii)]{ 
$|\log{\lambda_n}| + |y_n| \to \infty.$
}
\end{itemize}
\end{lemma}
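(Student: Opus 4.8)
The plan is to prove the contrapositive-friendly equivalence directly, separating the two implications. For the implication (ii) $\Rightarrow$ (i), I would first reduce to testing against a dense class: since $\{D_{y_n,\lambda_n}v\}$ is bounded in $\dot H^s$ (the $D_{y,\lambda}$ are unitary), it suffices to show $\langle D_{y_n,\lambda_n}v,\varphi\rangle_{\dot H^s}\to 0$ for all $\varphi$ in a dense subset, say $\varphi\in C^\infty_0(\R^N)$, and likewise I may assume $v\in C^\infty_0(\R^N)$. Writing the $\dot H^s$ inner product via the Fourier transform as $\int_{\R^N}|\xi|^{2s}\widehat{D_{y_n,\lambda_n}v}(\xi)\,\overline{\hat\varphi(\xi)}\,d\xi$ and using $\widehat{D_{y,\lambda}v}(\xi)=\lambda^{(N+2s)/2}e^{-i y\cdot\xi}\hat v(\lambda\xi)$, the pairing becomes $\lambda_n^{(N+2s)/2}\int|\xi|^{2s}e^{-iy_n\cdot\xi}\hat v(\lambda_n\xi)\,\overline{\hat\varphi(\xi)}\,d\xi$. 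Now I would split the hypothesis $|\log\lambda_n|+|y_n|\to\infty$ into cases: if $\lambda_n\to 0$ or $\lambda_n\to\infty$ along a subsequence, a change of variables plus dominated convergence (exactly as invoked after \eqref{eq_34stella}) kills the integral; if $\lambda_n$ stays in a compact subset of $(0,\infty)$, then necessarily $|y_n|\to\infty$, and the integral tends to zero by the Riemann–Lebesgue lemma applied to the Schwartz function $\xi\mapsto|\xi|^{2s}\hat v(\lambda_n\xi)\overline{\hat\varphi(\xi)}$ (with uniform control so that one may pass along subsequences). Since every subsequence has a further subsequence along which the pairing tends to $0$, the whole sequence does.

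For the implication (i) $\Rightarrow$ (ii) I would argue by contraposition: suppose (ii) fails, so that along some subsequence $|\log\lambda_n|+|y_n|$ stays bounded; then $(y_n,\lambda_n)$ is confined to a compact subset of $\R^N\times(0,\infty)$, and by passing to a further subsequence we may assume $(y_n,\lambda_n)\to(y,\lambda)$ for some $(y,\lambda)\in\R^N\times(0,\infty)$. By the strong continuity \eqref{eq_34stella}, $D_{y_n,\lambda_n}v\to D_{y,\lambda}v$ strongly in $\dot H^s$ for every $v$. Choosing any $v\neq 0$ and noting that $D_{y,\lambda}v\neq 0$ since $D_{y,\lambda}$ is unitary, we get $D_{y_n,\lambda_n}v\rightharpoonup D_{y,\lambda}v\neq 0$, so (i) fails. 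This establishes the equivalence.

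The only mildly delicate point is the case analysis in (ii) $\Rightarrow$ (i): one must make sure that the three regimes ($\lambda_n\to 0$, $\lambda_n\to\infty$, $\lambda_n$ bounded away from $0$ and $\infty$ with $|y_n|\to\infty$) genuinely exhaust all subsequences of a sequence with $|\log\lambda_n|+|y_n|\to\infty$, and that in the bounded-$\lambda$ regime one really does have $|y_n|\to\infty$. This is elementary once stated carefully, and the subsequence-of-subsequence trick dispenses with the need to treat mixed behaviour. No genuinely hard estimate is required here — the whole proof is a packaging of Fourier-side dominated convergence, Riemann–Lebesgue, and the strong continuity already recorded in \eqref{eq_34stella}.
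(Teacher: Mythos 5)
Your proof is correct. The direction (i) $\Rightarrow$ (ii) is essentially the paper's argument: negating (ii) confines $(y_n,\lambda_n)$ to a compact set, and the strong continuity \eqref{eq_34stella} plus unitarity of $D_{y,\lambda}$ produces a nonzero weak limit along a subsequence (the paper phrases this as a contradiction with $\|u\|_{\dot H^s}\neq 0$, you as weak convergence to $D_{y,\lambda}v\neq 0$; same idea). For (ii) $\Rightarrow$ (i) you take a genuinely different route. The paper stays on the physical side: it tests against $u$ with $\hat u\in C^\infty_0(\R^N\setminus\{0\})$ so that $(-\Delta)^s u\in\mathcal{S}(\R^N)$, rewrites the pairing as $\int ((-\Delta)^s u)\, D_{y_n,\lambda_n}v\,dx$, and treats only two regimes: $\lambda_n\geq\lambda>0$, where $D_{y_n,\lambda_n}v$ is equibounded in $L^\infty$ and tends to $0$ a.e.\ so dominated convergence applies, and $\lambda_n\to 0$, where the integral is split over $B_\delta$ and its complement, the far part killed by the Schwartz decay of $v$ and the near part by H\"older together with the scale invariance of the $L^{2^\ast}$ norm. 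You instead pass to the Fourier side, handling the dilation regimes $\lambda_n\to 0,\infty$ by domination and the pure translation regime by Riemann--Lebesgue; your three-case exhaustion of subsequences is valid and the subsequence-of-subsequences argument closes the loop. Two small points you should write out to make this airtight: in the regime $\lambda_n\to\infty$ the prefactor $\lambda_n^{(N+2s)/2}$ blows up, so the dominating function must come from the Schwartz decay of $\hat v$, e.g.\ $\lambda_n^{(N+2s)/2}|\hat v(\lambda_n\xi)|\leq C|\xi|^{-(N+2s)/2}$, which multiplied by $|\xi|^{2s}|\hat\varphi(\xi)|$ gives the integrable majorant $C|\xi|^{-(N-2s)/2}|\hat\varphi(\xi)|$; and in the translation regime the density $\xi\mapsto|\xi|^{2s}\hat v(\lambda_n\xi)\overline{\hat\varphi(\xi)}$ moves with $n$, so one should pass to a subsequence with $\lambda_n\to\lambda_0\in(0,\infty)$, note $L^1$-convergence of the densities, and only then invoke Riemann--Lebesgue. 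With these details supplied, your Fourier-side argument is a clean alternative to the paper's $\delta$-splitting.
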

\begin{proof}

Assume that the convergence in $\rm (i)$ does hold; thus 
\begin{equation}\label{eq_dconv0}
\langle w, D_{y_n,\lambda_n}v\rangle_{\dot{H}^s} \, \underset{n\to\infty}\longrightarrow \, 0 \quad \forall w, v \in \dot{H}^s(\R^N).
\end{equation}
We argue by contradiction. If ${\rm (ii)}$ fails, then up to subsequences $y_n\to y\in \R^N$ and $\lambda_n\to \lambda>0$ as $n$ goes to infinity. Thus, for any $u\neq 0$, by continuity of the scalar product and \eqref{eq_34stella}, one can write
$$
\lim_{n\to \infty} \langle D_{y_n,\lambda_n}u, D_{y_n,\lambda_n}u\rangle_{\dot{H}^s} \, = \, 0,
$$
which, still in view of \eqref{eq_34stella} and the fact that $D \subset \mathcal{U}(\dot{H}^s)$ also gives  
$$
0 \, = \, \langle D_{y,\lambda}u, D_{y,\lambda}u\rangle_{\dot{H}^s} 
\, = \, \|D_{y,\lambda}u \|^2_{\dot{H}^s} 
\, \equiv \, \|u\|^2_{\dot{H}^s},
$$
and this is a contradiction, since we have taken $u\neq 0$.

\vspace{2mm}
Conversely, now we assume that the condition~$\rm (ii)$ does hold. Thus, up to renumbered subsequence, we have to distinguish between the following two cases: $\lambda_n\to 0$ or $\lambda_n \geq \lambda > 0$.
\vspace{1mm}

It is clearly enough to check the property ${\rm (i)}$ on the dense space of all~$u, v$ in the Schwarz class of rapidly decaying functions $\mathcal{S}(\R^N)$ such that $\mathcal{F}(u) \in C^\infty_0(\R^N\setminus\{0\})$. Notice that, under this assumption, we have $(-\Delta)^s u, (-\Delta)^s v \in \mathcal{S}(R^N)$.

\vspace{2mm}

We can write
\begin{eqnarray}\label{eq_38stella}
\dys
\langle u, D_{y_n,\lambda_n} v \rangle_{\dot{H}^s} & = & \int (-\Delta)^{\frac{s}{2}} u\, (-\Delta)^{{\frac{s}{2}}} (D_{y_n,\lambda_n} v) \, dx \nonumber \\
& = & \int ((-\Delta)^{s} u)\, D_{y_n,\lambda_n} v \,dx, \quad \forall u, v \in \mathcal{S}.
\end{eqnarray}
Therefore,  it suffices to analyze the convergence in the identity above, with respect to the aforementioned two cases.

\vspace{2mm}

In the first case,  that is when $\lambda_n \geq \lambda>0$, we have that $D_{y_n,\lambda_n}$ is equi-bounded in $L^\infty$. Moreover, since $|y_n|\to \infty$ and/or $\lambda_n\to \infty$, we have that $D_{y_n,\lambda_n} v \to 0$ a.~\!e. as $n\to \infty$, so that the Dominated Convergence Theorem yields~\eqref{eq_dconv0}, i.~\!e. (i) holds.

\vspace{2mm}
In the second case,  that is when $\lambda_n \to 0$, we can write
$$
\dys
\langle u, D_{y_n,\lambda_n} v \rangle_{\dot{H}^s} \, = \,  \int (-\Delta)^s u(x+y_n) \,\lambda^{-\frac{N-2s}{2}}v\left(\frac{x}{\lambda_n}\right)\, dx. 
$$
For any $\delta>0$ it is convenient to split the integral in two parts, respectively
\begin{eqnarray}\label{levico}
\dys
&& \int_{\R^N} (-\Delta)^s u(x+y_n) \,\lambda^{-\frac{N-2s}{2}}v\left(\frac{x}{\lambda_n}\right)\, dx \nonumber \\
&&\qquad\qquad\qquad = \, \int_{\cc B_\delta(0)} (-\Delta)^s u(x+y_n) \,\lambda^{-\frac{N-2s}{2}}v\left(\frac{x}{\lambda_n}\right)\, dx \nonumber \\
& &\qquad\qquad\qquad \quad + \, \int_{B_\delta(0)} (-\Delta)^s u(x+y_n) \,\lambda^{-\frac{N-2s}{2}}v\left(\frac{x}{\lambda_n}\right)\, dx,
\end{eqnarray}
where $\cc B_{\delta}(0)$ denotes the complement of the ball of radius $\delta$ centered in 0.

The first integral in the right-hand side of~\eqref{levico} can be estimated as follows
\begin{eqnarray}\label{lev2}
\dys
&& \left|
\int_{\cc B_\delta(0)} (-\Delta)^s u(x+y_n) \,\lambda^{-\frac{N-2s}{2}}v\left(\frac{x}{\lambda_n}\right)\, dx
\right| \nonumber \\
&& \qquad\qquad\qquad\, \leq \,
\left(\int_{\R^N} |(-\Delta)^s u| \, dx \right) \|D_{y_n,\lambda_n} v\|_{L^\infty(\cc B_\delta(y_n))} 
\, \overset{n\to\infty}\longrightarrow \, 0.
\end{eqnarray}

For the second integral, we can use the H\"older inequality to get, for any $\delta >0$,
\begin{eqnarray}\label{lev3}
\dys
&& \left| \int_{B_\delta(0)} (-\Delta)^s u(x+y_n) \,\lambda^{-\frac{N-2s}{2}}v\left(\frac{x}{\lambda_n}\right)\, dx \right| \nonumber \\
&&\qquad\qquad\qquad \leq \, \|(-\Delta)^s u\|_{L^\infty(\R^N)} \, \int_{B_\delta(0)} \left| \lambda^{-\frac{N-2s}{2}} v\left(\frac{x}{\lambda_n}\right)\right|\, dx \nonumber \\
&&\qquad\qquad\qquad \leq \, \|(-\Delta)^s u\|_{L^\infty(\R^N)} \left( \int_{B_\delta(0)} \left| \lambda^{-\frac{N-2s}{2}} v\left(\frac{x}{\lambda_n}\right)\right|^{2^\ast}\, dx \right)^{\frac{1}{2^\ast}} |B_\delta(0)|^{\frac{2N}{N+2s}} \nonumber \\
&&\qquad\qquad\qquad \leq \, c\|v\|_{L^{2^\ast}(\R^N)} \delta^{\frac{2N}{N+2s}},
\end{eqnarray}
where we also used the scaling invariance of the $L^{2^\ast}$-norm.
Finally, combining~\eqref{levico} with \eqref{lev2} and \eqref{lev3}, we have
$$
\limsup_{n\to\infty} \left| \int_{\R^N} (-\Delta)^s u(x+y_n) \,\lambda^{-\frac{N-2s}{2}}v\left(\frac{x}{\lambda_n}\right)\, dx  \right| 
\, \leq \, c \|v\|_{L^{2^\ast}}\delta^{\frac{2N}{N+2s}},
$$
which yields~\eqref{eq_dconv0} as $\delta\to 0$. Thus condition (i) holds also in this second case and the proof is complete.
\end{proof}

Taking into account the group properties~\eqref{eq_40stella}-\eqref{def_pi}, we can write
$$
\dys 
D^{-1}_{a_n,\delta_n} \circ D_{{y_n},{\lambda_n}} 
\, = \pi ( (a_n,\delta_n)^{-1} \circ (y_n, \lambda_n)) = \, D_{\delta_n^{-1}(y_n-a_n),\delta_n^{-1} \lambda_n}  \, .
$$

Combining Lemma~\ref{lemma1} with the previous identity we obtain the following result, which, roughly speaking, say that two arbitrary $\dot{H}^s$-functions are made orthogonal by sequences of elements of the group going at infinity in ``different directions''. As a consequence we will see that asymptotic orthogonality is obtained either if the dilation parameters are not comparable or if they are comparable each other but negligible with respect  to the distance between the translation parameters as $n \to \infty$.
\begin{lemma}\label{lemma2}
For any sequences $\{(a_n,\delta_n)\}, \{(b_n,\lambda_n)\} \subset \R^N\times(0,\infty)$. Then the following statements are equivalent as  $n \to \infty$
\begin{itemize}
\item[(i)]{
$\dys \langle D_{a_n,\delta_n}u, D_{y_n,\lambda_n}v\rangle_{\dot{H}^s} \rightarrow 0 \quad \forall \, u, v \in \dot{H}^s(\R^N).$
}\vspace{1mm}
\item[(ii)]{ 
$\dys \left|\frac{y_n -a_n}{\delta_n}\right| + \left|\log{\left(\frac{\lambda_n}{\delta_n}\right)}\right|\rightarrow \infty.$
}
\end{itemize}
\end{lemma}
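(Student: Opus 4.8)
The plan is to reduce Lemma~\ref{lemma2} to Lemma~\ref{lemma1} by means of the group structure. Using~\eqref{eq_40stella}--\eqref{def_pi} and the fact that the operators in $D$ are unitary, for every $u,v\in\dot H^s(\R^N)$ one has the identity
\begin{equation}\label{eq_reduction}
\langle D_{a_n,\delta_n}u,\,D_{y_n,\lambda_n}v\rangle_{\dot H^s}
=\langle u,\,D^{-1}_{a_n,\delta_n}\circ D_{y_n,\lambda_n}v\rangle_{\dot H^s}
=\langle u,\,D_{\delta_n^{-1}(y_n-a_n),\,\delta_n^{-1}\lambda_n}v\rangle_{\dot H^s}.
\end{equation}
So condition (i) of Lemma~\ref{lemma2} says precisely that the sequence $\{(\tilde y_n,\tilde\lambda_n)\}:=\{(\delta_n^{-1}(y_n-a_n),\,\delta_n^{-1}\lambda_n)\}$ satisfies condition (i) of Lemma~\ref{lemma1} — at least in its bilinear form $\langle u, D_{\tilde y_n,\tilde\lambda_n}v\rangle\to 0$ for all $u,v$, which by polarization and the Riesz representation theorem is equivalent to the weak convergence $D_{\tilde y_n,\tilde\lambda_n}v\rightharpoonup 0$ for all $v$ stated there.

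First I would record~\eqref{eq_reduction}, noting that the middle equality is the unitarity of $D_{a_n,\delta_n}$ and the last equality is~\eqref{eq_40stella} together with $(a_n,\delta_n)^{-1}=(-\delta_n^{-1}a_n,\delta_n^{-1})$, which one reads off from the group law~\eqref{grouplaw}. Then I would apply Lemma~\ref{lemma1} to $\{(\tilde y_n,\tilde\lambda_n)\}$: condition (i) there holds if and only if
\[
|\log\tilde\lambda_n|+|\tilde y_n|=\Bigl|\log\Bigl(\tfrac{\lambda_n}{\delta_n}\Bigr)\Bigr|+\Bigl|\tfrac{y_n-a_n}{\delta_n}\Bigr|\,\longrightarrow\,\infty,
\]
which is exactly condition (ii) of Lemma~\ref{lemma2}. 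This closes the equivalence.

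The only genuine point to check is that condition (i) of Lemma~\ref{lemma1} — stated there as weak convergence $D_{y_n,\lambda_n}v\rightharpoonup 0$ for all $v$ — is the same as the vanishing of all scalar products $\langle u, D_{y_n,\lambda_n}v\rangle_{\dot H^s}$, which is what~\eqref{eq_reduction} produces. This is immediate: weak convergence to $0$ in the Hilbert space $\dot H^s$ means exactly that $\langle u,\cdot\rangle_{\dot H^s}\to 0$ against every $u\in\dot H^s$, since $\dot H^s$ is its own dual via the Riesz isomorphism. Hence I do not expect a real obstacle here; the proof is essentially the two-line computation~\eqref{eq_reduction} plus a citation of Lemma~\ref{lemma1}. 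If anything, the mildly delicate part is purely bookkeeping: making sure the inverse and the composition in $G$ are computed consistently with the convention~\eqref{grouplaw}, so that the parameters come out as $(\delta_n^{-1}(y_n-a_n),\delta_n^{-1}\lambda_n)$ and not, say, with $a_n$ and $y_n$ interchanged or a spurious $\lambda_n/\delta_n$ in the wrong slot — but this is exactly the identity already displayed just before the statement of Lemma~\ref{lemma2} in the excerpt, so it can simply be invoked.
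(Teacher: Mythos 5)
Your proposal is correct and is precisely the argument the paper intends: the displayed identity $D^{-1}_{a_n,\delta_n}\circ D_{y_n,\lambda_n}=D_{\delta_n^{-1}(y_n-a_n),\,\delta_n^{-1}\lambda_n}$ appears verbatim just before the statement, and the paper derives the lemma by combining it with Lemma~\ref{lemma1} exactly as you do, via unitarity of the $D_{y,\lambda}$ and the Riesz self-duality of $\dot H^s$. No gap.
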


Note that an equivalent version of the previous lemma is already present in \cite{Ger98}; that result holds in $L^2(\mathds{R}^N)$ and the notion of ``different directions'' given in (ii) here corresponds to the notion of orthogonality used there. The way translations and dilation acts on $L^2$ is obviously different but it still corresponds to a unitary representation of the same group $G$ on the Hilbert space $L^2$.  Of course the operator $(-\Delta)^{s/2}:\dot{H}^s \to L^2$ is unitary, it makes the two representations of $G$ unitary equivalent and it is easy to check that the two orthogonality property, in $L^2$ from \cite{Ger98} and in $\dot{H}^s$ here, coincide and always correspond to condition (ii) on the parameters. However, in our opinion the present approach is much simpler than the one based on a subtle analysis of $h-$oscillating sequences used in \cite{Ger98}.

\vskip2mm 
In view of the preceding lemmas, we can apply \cite[Proposition 3.1]{TF07}, and so we can conclude that the pair $(\dot{H}^s, D)$ is a {\it dislocation space} in the sense of~\cite[Definition 3.2 and Remark 3.1]{TF07}. Thus, the abstract theory by Tintarev can be applied,  
in turn implying the desired profile decomposition for $\dot{H}^s(\R^N)$, as given in the following theorem.
\begin{thm}\label{thm_TF}{\rm (}\cite[Theorem 3.1, Corollary 3.2]{TF07}{\rm )}.
Let $(H,D)$ be a dislocation space, where $D\subset\mathcal{U}(H)$. 
 If $\{ u_n \} \subset H$ is a bounded sequence, then there exist a (at most countable) set $J$, 
$\psi^{(j)} \in H$, 
$ {g}_n^{(j)} \in D$, ${g}^{(1)}_n = Id$, with $n \in \N$, $j\in J$ such that for a renumbered subequence, as $n \to \infty$ we have 
\begin{eqnarray*}
\dys
&& \psi^{(j)} = w -\lim  {g}_n^{(j)^\ast} u_n, \\[1ex]
&& {g}_n^{(j)^\ast}{g}_n^{(m)} \rightharpoonup 0 \ \text{for} \ j \neq m, \\[1ex]
&& \|u_n\|^2_H = \sum_{j\in J} \|\psi^{(j)} \|_H^2 +\|r_n||^2_H+o(1) \\[1ex]
&& u_n- \sum_{j\in J} {g}_n^{(j)}\psi^{(j)}\overset{D}\rightharpoonup 0,
\end{eqnarray*}
where the series $\sum_{j\in J} g_n^{(j)}\psi^{(j)}$ converges in $H$ uniformly in $n$.
\end{thm}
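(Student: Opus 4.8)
Since the statement is precisely \cite[Theorem~3.1 and Corollary~3.2]{TF07}, one option is simply to invoke that reference; let me nonetheless outline the argument one would carry out. The plan is to run an exhaustion (``peeling off the largest concentration'') procedure: extract the profiles $\psi^{(j)}$ and the dislocations $g_n^{(j)}$ one at a time, control the total mass of the profiles by a Bessel-type inequality, and close up via a diagonal subsequence.

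First I would set $g_n^{(1)}:=\mathrm{Id}$, pass to a subsequence along which $u_n\rightharpoonup\psi^{(1)}$, and put $r_n^{(1)}:=u_n-\psi^{(1)}$. Then, inductively, having produced $\psi^{(1)},\dots,\psi^{(J)}$, $g_n^{(1)},\dots,g_n^{(J)}$ and $r_n^{(J)}:=u_n-\sum_{j=1}^{J}g_n^{(j)}\psi^{(j)}$, I would consider
\[
t_J:=\sup\Bigl\{\,\|\psi\|_H\ :\ \psi=w\text{-}\lim_k(h_{n_k})^\ast r_{n_k}^{(J)}\ \text{for some}\ \{h_n\}\subset D\ \text{and a subsequence}\,\Bigr\},
\]
stopping the construction if $t_J=0$, and otherwise choosing $g_n^{(J+1)}\in D$ and a subsequence so that $(g_n^{(J+1)})^\ast r_n^{(J)}\rightharpoonup\psi^{(J+1)}$ with $\|\psi^{(J+1)}\|_H\ge\frac12 t_J>0$, setting $r_n^{(J+1)}:=r_n^{(J)}-g_n^{(J+1)}\psi^{(J+1)}$.

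The heart of the argument is the orthogonality bookkeeping. One first checks that $g_n^{(J+1)}$ is automatically asymptotically orthogonal to each previous $g_n^{(j)}$, i.e. $(g_n^{(j)})^\ast g_n^{(J+1)}\rightharpoonup 0$: by the axioms of a dislocation space, if this failed then a subsequence of $(g_n^{(j)})^\ast g_n^{(J+1)}$ would converge strongly to some $g\in D$, and then $\psi^{(J+1)}$ (acted on by $g$) would already have been detected at the $j$-th step, against the maximality built into the earlier choices. From $(g_n^{(i)})^\ast g_n^{(j)}\rightharpoonup 0$ one gets $\langle g_n^{(i)}\psi^{(i)},g_n^{(j)}\psi^{(j)}\rangle_H\to 0$ for $i\ne j$, and also $\langle r_n^{(J)},g_n^{(j)}\psi^{(j)}\rangle_H\to 0$ for all $j\le J$ by construction; expanding $\|u_n\|_H^2$ then yields
\[
\|u_n\|_H^2=\sum_{j=1}^{J}\|\psi^{(j)}\|_H^2+\|r_n^{(J)}\|_H^2+o(1)\qquad(n\to\infty),
\]
hence $\sum_{j\in J}\|\psi^{(j)}\|_H^2\le\limsup_n\|u_n\|_H^2<\infty$, so $J$ is at most countable, $\|\psi^{(j)}\|_H\to 0$, and (since $\|\psi^{(J+1)}\|_H\ge\frac12 t_J$) also $t_J\to 0$; the same almost-orthogonality combined with $\sum_j\|\psi^{(j)}\|_H^2<\infty$ gives the uniform-in-$n$ convergence of $\sum_{j\in J}g_n^{(j)}\psi^{(j)}$. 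Passing to a diagonal subsequence, I would then set $r_n:=u_n-\sum_{j\in J}g_n^{(j)}\psi^{(j)}$, observe that $\|r_n-r_n^{(J)}\|_H\le\varepsilon_J$ with $\varepsilon_J\to 0$ uniformly in $n$, and deduce that any weak limit of $(h_n)^\ast r_n$ has norm $\le t_J+\varepsilon_J$ for every $J$, hence is $0$, i.e. $r_n\overset{D}{\rightharpoonup}0$; letting $J\to\infty$ in the displayed identity produces the energy splitting.

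In my view the main obstacle is twofold. One part is the automatic asymptotic orthogonality of a newly extracted dislocation against all previous ones --- this is exactly where one uses the group structure of $D$ and the strong continuity of the representation $\pi$ (in the concrete realization $H=\dot{H}^s$, the dichotomy supplied by Lemmas~\ref{lemma1} and~\ref{lemma2}). The other is verifying the uniform-in-$n$ convergence of the profile series, without which neither the diagonal extraction nor the $D$-weak vanishing of the remainder can be made rigorous; both points are carried out in full in \cite[Chapter~3]{TF07}.
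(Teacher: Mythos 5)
The paper offers no proof of this statement: it is imported verbatim from \cite[Theorem 3.1, Corollary 3.2]{TF07}, which is exactly what you propose doing, and your sketch of the exhaustion argument from that reference is faithful to it. The only loose point is the justification of the automatic asymptotic orthogonality: the precise mechanism is that if $(g_n^{(j)})^\ast g_n^{(J+1)}$ converged strongly along a subsequence to some $g\in D$, then $(g_n^{(J+1)})^\ast r_n^{(J)}\rightharpoonup 0$ would follow from $(g_n^{(j)})^\ast r_n^{(J)}\rightharpoonup 0$ (which holds by construction and the inductive orthogonality), contradicting $\psi^{(J+1)}\neq 0$ --- it is this vanishing, rather than ``maximality'' of the earlier choices, that drives the contradiction.
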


\vspace{2mm}

\begin{proof}[\bf Proof of Theorem~\ref{thm_decomposition}]
Taking into account Proposition \ref{prop_sdconv} and Lemma~\ref{lemma2} a-\break bove, the conclusion follows readily from Theorem~\ref{thm_TF}.  
\end{proof}

\vspace{1mm}

\section{Concentration-compactness Alternative}\label{sec_cca}

This section is devoted to the proof of Theorem~\ref{the_cca} and its consequences, i.~\!e. we establish the concentration-compactness alternative and we describe the behavior of the optimal sequences for the Sobolev inequality \eqref{pbsobolev2}.
 We  show that in the case of bounded domain there is no energy loss in the concentration process (see Proposition~\ref{pro_supporto}) and that the maximizing sequences for the Sobolev inequality always concentrate at one point (see Corollary~\ref{cor_concentrazio-sob}). We analyze the asymptotic behavior of the subcritical Sobolev constant $S^\ast_\eps$ and the corresponding optimal functions proving Theorem~\ref{the_concentrazione1}.

\vspace{2mm}
First we need some tools to handle the nonlocality of the fractional Laplacian. 

\subsection{Some useful lemmas}
 Roughly speaking, Lemma~\ref{lem_cutoff} and  Lemma~\ref{lem_commutator} below are workarounds to use cut-off functions and provide a way to manipulate smooth truncations for the fractional Laplacian; their proofs carefully requires properties of multipliers between Sobolev spaces and strong commutator estimates.

\begin{lemma}
\label{lem_cutoff}
Let $0<s<N/2$ and let  $u\in \dot{H}^s(\mathds{R}^N)$. Let $\varphi \in C^\infty_0(\mathds{R}^N)$ and for each $\lambda>0$ let $\varphi_\lambda (x):=\varphi(\lambda^{-1}x)$. Then
$$
u\varphi_\lambda \to 0 \ \text{in} \ \dot{H}^s(\mathds{R}^N) \ \text{as}\ \lambda \to 0.
$$
If, in addition, $\varphi\equiv 1$ in a neighborhood of the origin, then
$$
u\varphi_\lambda \to u  \ \text{in} \ \dot{H}^s(\mathds{R}^N) \ \text{as}  \ \lambda \to \infty.
$$
\end{lemma}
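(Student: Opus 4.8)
The statement concerns the behaviour of the pointwise product $u\varphi_\lambda$ in $\dot H^s$, where $\varphi\in C_0^\infty(\mathds R^N)$ and $\varphi_\lambda(x)=\varphi(\lambda^{-1}x)$. The plan is to exploit the characterization of $\dot H^s$ in terms of the Riesz potential / Fourier multiplier together with the basic fact that smooth compactly supported functions are (pointwise) multipliers of $\dot H^s(\mathds R^N)$ for $0<s<N/2$; equivalently one can work with the Gagliardo seminorm \eqref{gagliardo} when $0<s<1$ and with the multiplier calculus (fractional Leibniz / Kato--Ponce) in general. The key analytic input is a uniform bound: there exists $C=C(N,s,\varphi)$ such that $\|v\,\varphi_\lambda\|_{\dot H^s}\le C\|v\|_{\dot H^s}$ for all $\lambda>0$ and all $v\in\dot H^s$. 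This scale-invariance is immediate because $\|\varphi_\lambda\|_{L^\infty}=\|\varphi\|_{L^\infty}$ and the relevant homogeneous norms of $\varphi_\lambda$ that enter the Leibniz estimate (e.g.\ $\|\varphi_\lambda\|_{\dot W^{s,N/s}}$, or $\|(-\Delta)^{s/2}\varphi_\lambda\|_{L^{N/s}}$) are themselves dilation-invariant by the choice of the critical integrability exponent $N/s$.

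With the uniform bound in hand, both limits follow from a density argument. First I would prove the two statements for $u\in C_0^\infty(\mathds R^N)$, where everything is elementary: if $u$ has compact support then for $\lambda$ small $u\varphi_\lambda$ is supported in a shrinking neighbourhood of $0$ (or is identically $0$ if $0\notin\operatorname{spt}u$ — but in general $u\varphi_\lambda\to 0$ pointwise with all derivatives controlled), and a direct computation using \eqref{gagliardo} (for $0<s<1$) or the Fourier side (in general) gives $\|u\varphi_\lambda\|_{\dot H^s}\to 0$ as $\lambda\to 0$; here one uses that the $L^2$-mass of $u\varphi_\lambda$ tends to zero while the commutator/remainder terms are controlled by $\|u\|_{C^1}$ times a vanishing factor. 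Symmetrically, if $\varphi\equiv 1$ near the origin, then for $u\in C_0^\infty$ and $\lambda$ large one has $u\varphi_\lambda\equiv u$ on the support of $u$, hence $u\varphi_\lambda=u$ exactly for $\lambda$ large enough, so the second limit is trivial on the dense class. Then, given a general $u\in\dot H^s$ and $\varepsilon>0$, pick $u_\varepsilon\in C_0^\infty$ with $\|u-u_\varepsilon\|_{\dot H^s}<\varepsilon$; write $u\varphi_\lambda = (u-u_\varepsilon)\varphi_\lambda + u_\varepsilon\varphi_\lambda$ and, in the second limit, $u\varphi_\lambda-u=(u-u_\varepsilon)\varphi_\lambda+(u_\varepsilon\varphi_\lambda-u_\varepsilon)+(u_\varepsilon-u)$; the uniform multiplier bound dominates the first term by $C\varepsilon$ uniformly in $\lambda$, the last term is $\le\varepsilon$, and the middle term goes to zero by the smooth case. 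Letting $\lambda\to 0$ (resp.\ $\lambda\to\infty$) and then $\varepsilon\to 0$ finishes the argument.

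The main obstacle is establishing the uniform multiplier estimate $\|v\varphi_\lambda\|_{\dot H^s}\le C\|v\|_{\dot H^s}$ with a constant independent of $\lambda$, especially in the regime $s\ge 1$ where the Gagliardo formula is unavailable. For $0<s<1$ this is straightforward: splitting $|u\varphi_\lambda(x)-u\varphi_\lambda(y)|\le |\varphi_\lambda(x)||u(x)-u(y)| + |u(y)||\varphi_\lambda(x)-\varphi_\lambda(y)|$ in \eqref{gagliardo}, the first piece is bounded by $\|\varphi\|_{L^\infty}\|u\|_{\dot H^s}$ and the second by a Hardy-type inequality using $|\varphi_\lambda(x)-\varphi_\lambda(y)|\le C\lambda^{-1}|x-y|\wedge 2\|\varphi\|_{L^\infty}$ together with $\|u\|_{L^{2^\ast}}\le C\|u\|_{\dot H^s}$ (Sobolev) and a computation of the kernel $\int (|x-y|^{-1}\lambda^{-1}\wedge 1)^2|x-y|^{-N-2s}\,dy$, whose scaling is exactly $\lambda^{-2s}$-homogeneous and recombines with the factor $\|u(y)\|$ measured in a dilation-invariant norm — one checks all powers of $\lambda$ cancel. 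For general $s$ one invokes the fractional Leibniz rule $\|fg\|_{\dot H^s}\lesssim \|f\|_{L^\infty}\|g\|_{\dot H^s}+\|f\|_{\dot W^{s,N/s}}\|g\|_{L^{2^\ast}}$ (Kato--Ponce / Grafakos--Oh), and notes that $\|\varphi_\lambda\|_{L^\infty}$ and $\|\varphi_\lambda\|_{\dot W^{s,N/s}}$ are both $\lambda$-independent because $N/s$ is the critical exponent making $\dot W^{s,N/s}$ scale-invariant, while $\|v\|_{L^{2^\ast}}\le C\|v\|_{\dot H^s}$ by \eqref{eq_sobolev}; this yields the uniform bound and the rest of the proof goes through as above.
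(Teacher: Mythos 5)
Your argument is correct and follows essentially the same route as the paper: establish a $\lambda$-uniform multiplier bound $\|v\varphi_\lambda\|_{\dot H^s}\le C\|v\|_{\dot H^s}$, verify both limits on the dense class $C^\infty_0$ (where the $\lambda\to\infty$ case is an exact identity for large $\lambda$), and conclude by the triangle inequality. The only genuine difference is how the key uniform bound is obtained: the paper simply invokes the Maz'ya--Shaposhnikova theory of Sobolev multipliers (\cite{MS}) together with the scale invariance of the $\dot H^s$ norm, whereas you reprove it by hand via the Gagliardo seminorm for $0<s<1$ and a homogeneous Kato--Ponce/Leibniz estimate with the critical exponents $L^\infty$, $\dot W^{s,N/s}$, $L^{2^\ast}$ for general $s$; your version is more self-contained, the paper's is shorter and uniform in $s$. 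One spot where your sketch is thinner than it should be is the $\lambda\to 0$ limit on the dense class: the phrase about ``commutator/remainder terms controlled by $\|u\|_{C^1}$ times a vanishing factor'' does not by itself give decay of the full $\dot H^s$ seminorm (a naive $H^m$ bound on $u\varphi_\lambda$ actually grows like $\lambda^{N/2-m}$ for $m>N/2$). The clean fixes are either interpolation, $\|u\varphi_\lambda\|_{\dot H^s}\lesssim\|u\varphi_\lambda\|_{L^2}^{1-s/m}\|u\varphi_\lambda\|_{\dot H^m}^{s/m}\lesssim\lambda^{N/2-s}\to0$, or the paper's device of using that the fixed smooth $u_n$ is itself a multiplier, so $\|u_n\varphi_\lambda\|_{\dot H^s}\le C(u_n)\|\varphi_\lambda\|_{H^s}$ with $\|\varphi_\lambda\|_{H^s}^2\sim\lambda^N+\lambda^{N-2s}\to0$ since $s<N/2$. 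With that step made precise, your proof is complete.
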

\begin{proof}
First, note that each function $\varphi_\lambda$ gives a bounded multiplication operator  $M_{\varphi_\lambda} \in \mathscr{L}(\dot{H}^s,\dot{H}^s)$ with operator norm independent on $\lambda$ because of the scale invariance of the $\dot{H}^s$ norm (see \cite[Chapter 3]{MS}, where instead of $\dot{H}^s$ the more traditional notation $h^s_2$ is used for the Riesz potential space of order $s$ and summability two).
\vspace{1mm}
\\ Thus, if $C\equiv \| \varphi_\lambda\|_{\mathcal{L}(\dot{H}^s,\dot{H}^s)}$ we have
\begin{equation}
\label{multest}
\| v \varphi_\lambda\|_{\dot{H}^s} \leq C \| v\|_{\dot{H}^s} 
\end{equation}
for any $v \in \dot{H}^s$.

By density we take a sequence $\{ u_n\} \subset C^\infty_0 (\mathds{R}^N)$ such that $u_n \to u$ in $\dot{H}^s$, so we can estimate
\begin{equation}
\label{zero}
\| u\varphi_\lambda \|_{\dot{H}^s} \, \leq \, \| (u-u_n)\varphi _\lambda\|_{\dot{H}^s}+\| u_n\varphi_\lambda\|_{\dot{H}^s} \, \leq \, C \| (u-u_n)\|_{\dot{H}^s}+\| u_n\varphi_\lambda\|_{\dot{H}^s}\, .
\end{equation}
Since for fixed $n$ the function $u_n$ gives also a bounded multiplier on $\dot{H}^s$, we have
\begin{equation}
\label{first}
\| u_n\varphi_\lambda\|_{\dot{H}^s} \leq C(u_n) \| \varphi_\lambda \|_{H^s}\to 0 
\end{equation}
as $\lambda \to 0$ by a direct scaling argument. Thus, the first statement of the lemma follows from \eqref{zero} and \eqref{first} letting $\lambda \to 0$ and $n \to \infty$.

\vspace{2mm}

In order to prove the second statement, it is enough to note  that whenever $u \in C^\infty_0(\mathds{R}^N)$ (indeed for any $u$ which is compactly supported) we have $u\varphi_\lambda \equiv u$ for $\lambda$ sufficiently large (depending on $u$). Thus we see that $u \varphi_\lambda \to u$ as $\lambda \to \infty$ for any $u \in C^\infty_0(\mathds{R}^N)$ and the same holds for any $u\in \dot{H}^s(\Om)$ by approximation.
\vspace{1mm}

Indeed,  \eqref{multest} gives
\begin{eqnarray}\label{second}
\| u- u\varphi_\lambda \|_{\dot{H}^s} & \leq & \| (u-u_n)(1-\varphi _\lambda)\|_{\dot{H}^s}+\| u_n(1-\varphi_\lambda)\|_{\dot{H}^s} \nonumber \\
& \leq & (1+C) \| (u-u_n)\|_{\dot{H}^s}+\| u_n(1-\varphi_\lambda)\|_{\dot{H}^s}\, ,
\end{eqnarray}
and the conclusion follows arguing as in the previous case.
\end{proof}
\vspace{2mm}

\begin{lemma}
\label{lem_commutator}
Let $0<s<N/2$, let $\Omega \subset \mathds{R}^N$ a bounded open set and let $\varphi~\!\in~\!C^\infty_0(\mathds{R}^N)$. Then the commutator $\, [\varphi, (-\Delta)^{\frac{s}{2}}]:\dot{H}^s(\Omega) \to L^2(\mathds{R}^N) \,$ is a compact operator, i.~\!e.
$$
\varphi  ((-\Delta )^{\frac{s}{2}}u_n)- (-\Delta)^{\frac{s}{2}} (\varphi u_n) \to 0 \quad \hbox{in} \quad L^2(\mathds{R}^N)
$$
whenever $u_n \rightharpoonup 0$ in $\dot{H}^s(\Omega)$ as $n \to \infty$.
\end{lemma}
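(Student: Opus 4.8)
The plan is to reduce the compactness of the commutator to two ingredients: a quantitative smoothing estimate showing that $[\varphi,(-\Delta)^{s/2}]$ gains derivatives, and the compact Rellich-type embedding on the bounded set $\Omega$. First I would recall the standard pseudodifferential commutator estimate (see e.g. Taylor \cite{t}): if $\varphi \in C^\infty_0(\mathds{R}^N)$, the operator $(-\Delta)^{s/2}$ has symbol $|\xi|^s \in S^s_{1,0}$ (away from the origin, with the usual care at $\xi = 0$ since we work with the homogeneous operator), and the commutator $[\varphi,(-\Delta)^{s/2}]$ is then a pseudodifferential operator of order $s-1$. Consequently it maps $\dot H^s(\mathds{R}^N) \to H^{1}_{\mathrm{loc}}(\mathds{R}^N)$ boundedly; more precisely, since $\varphi$ is compactly supported, for any ball $B$ containing $\mathrm{spt}\,\varphi$ one has
\begin{equation*}
\big\| [\varphi,(-\Delta)^{s/2}] u \big\|_{H^1(\mathds{R}^N)} \, \leq \, C(\varphi) \, \| u\|_{\dot H^s(\mathds{R}^N)} \, .
\end{equation*}
To make this rigorous at the level of the homogeneous space $\dot H^s(\Omega)$, I would first prove the estimate for $u \in C^\infty_0(\Omega)$ by the symbol calculus, and then extend by density using that $C^\infty_0(\Omega)$ is dense in $\dot H^s(\Omega)$ and that both sides are continuous in the $\dot H^s$ norm.

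Next I would exploit the fact that the output is supported in a fixed compact set. Indeed $\varphi (-\Delta)^{s/2}u - (-\Delta)^{s/2}(\varphi u)$: the first term is supported in $\mathrm{spt}\,\varphi$, while the second, although not compactly supported, decays and lies in $L^2(\mathds{R}^N)$; the commutator as a whole lands in $H^1(\mathds{R}^N)$ by the previous step. Now take $u_n \rightharpoonup 0$ in $\dot H^s(\Omega)$. Then $\{u_n\}$ is bounded in $\dot H^s$, so by the boundedness estimate $\{[\varphi,(-\Delta)^{s/2}]u_n\}$ is bounded in $H^1(\mathds{R}^N)$. Moreover, since the operator is continuous from $\dot H^s$ to $H^1$ and these are Hilbert spaces, weak convergence is preserved: $[\varphi,(-\Delta)^{s/2}]u_n \rightharpoonup 0$ in $H^1(\mathds{R}^N)$. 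To upgrade this to strong $L^2$ convergence I would localize: fix a cutoff $\chi \in C^\infty_0(\mathds{R}^N)$ with $\chi \equiv 1$ on a large ball. One checks, again via the commutator calculus and the decay of the heat/Bessel kernels, that $\|(1-\chi)\,[\varphi,(-\Delta)^{s/2}]u_n\|_{L^2} \to 0$ uniformly (this piece is smoothing of arbitrarily high order away from $\mathrm{spt}\,\varphi$), and $\chi\,[\varphi,(-\Delta)^{s/2}]u_n$ is bounded in $H^1$ with support in a fixed compact set, hence by Rellich's theorem it is precompact in $L^2$; combined with its weak limit being $0$, it converges strongly to $0$ in $L^2$. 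Adding the two contributions gives $[\varphi,(-\Delta)^{s/2}]u_n \to 0$ in $L^2(\mathds{R}^N)$, which is the claim.

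The main obstacle I anticipate is the careful treatment of the homogeneous (rather than inhomogeneous) fractional Laplacian: the symbol $|\xi|^s$ is not smooth at $\xi=0$, so the clean symbol calculus of \cite{t} does not apply verbatim, and one must either split $(-\Delta)^{s/2} = (\mathrm{Id}-\Delta)^{s/2} - R$ with $R$ a smoothing-type correction, or argue directly with the integral kernel $c_{N,s}\,|x-y|^{-N-s}$ representation of $(-\Delta)^{s/2}$ and estimate the kernel of the commutator, which is $\varphi(x)-\varphi(y)$ times that kernel and hence has an extra $|x-y|$ factor giving the gain of one derivative. The second, more technical point is justifying the decay estimate for the far-field piece $(1-\chi)[\varphi,(-\Delta)^{s/2}]u_n$ uniformly in $n$; this follows because for $x$ far from $\mathrm{spt}\,\varphi$ the commutator kernel is $\varphi(y)$ against a smooth rapidly-in-$|x|$ decaying kernel, so one gets an $L^2$-in-$x$ bound by $\|u_n\|_{\dot H^s}$ times a quantity tending to $0$ as the radius of $\chi$ grows, uniformly in $n$. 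Once these two kernel estimates are in place, the rest is the standard ``bounded in $H^1$ plus supported in a compact set, plus weak limit zero $\Rightarrow$ strong in $L^2$'' argument via Rellich.
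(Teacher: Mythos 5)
Your strategy is sound and it takes a genuinely different route from the paper's, although both arguments ultimately rest on the same two facts: the commutator has order $s-1$, and a compact Sobolev embedding converts this gain of one derivative into compactness. The difference is where the compactness is applied. The paper regularizes the symbol by working with $L_\varepsilon=(\varepsilon\,\mathrm{Id}-\Delta)^{s/2}$, shows that $L_\varepsilon\to L$ in operator norm from $H^s$ to $L^2$ so that it suffices to treat $[L_\varepsilon,\varphi]$ (the compact operators being norm-closed), quotes Taylor's estimate in the input-side form $\|[L_\varepsilon,\varphi]u\|_{L^2}\leq C\|\varphi\|_{H^\sigma}\|u\|_{H^{s-1}}$, and concludes immediately from the compactness of $\dot{H}^s(\Omega)\hookrightarrow H^{s-1}(\mathds{R}^N)$; this is the only place the boundedness of $\Omega$ enters, and it removes any need to localize the output or estimate far-field tails. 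You instead prove the output-side property $[\varphi,L]:\dot{H}^s\to H^1$ and run Rellich on the image, which obliges you to control the support of the output and to prove the uniform decay of $(1-\chi)L(\varphi u_n)$; both steps do work (the distribution kernel of $L$ is smooth away from the origin and $O(|x-y|^{-N-s})$ for every $s>0$, not only for $s<2$, so the tail is $O(\rho^{-N/2-s})\sup_n\|u_n\|_{L^2}$), but they are extra work that the paper's arrangement avoids. Two points you should make explicit. First, your ``splitting'' fix for the non-smooth symbol at $\xi=0$ is the right idea and is essentially the paper's device: the difference $(\mathrm{Id}-\Delta)^{s/2}-(-\Delta)^{s/2}$ has a bounded symbol, hence is $L^2$-bounded, and its commutator with $\varphi$ is compact on $\dot{H}^s(\Omega)$ simply because $\dot{H}^s(\Omega)\hookrightarrow L^2$ compactly; by contrast, your alternative via the kernel $c_{N,s}|x-y|^{-N-s}$ of the operator itself is only valid for $s<2$, whereas the lemma covers all $0<s<N/2$. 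Second, your bound $\|[\varphi,L]u\|_{H^1}\lesssim\|u\|_{\dot{H}^s}$ silently uses the equivalence of the homogeneous and inhomogeneous norms on $\dot{H}^s(\Omega)$, i.e.\ the boundedness of $\Omega$; you should say where this is used, since the paper explicitly remarks that the lemma is open for unbounded domains.
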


\begin{proof}
Let $L=(-\Delta)^{\frac{s}{2}}$ and for each $\varepsilon>0$ set $L_\varepsilon=(\varepsilon Id-\Delta)^{\frac{s}{2}}$. Clearly, by conjugation with Fourier transform
we have 
$$
Lu=\mathcal{F}^{-1}\circ M_{|\xi|^s} \circ \mathcal{F} (u) \ \ \ \ \text{and} \ \ \ \ L_\varepsilon u=\mathcal{F}^{-1}\circ M_{(|\xi|^2+\varepsilon)^{\frac{s}{2}}} \circ \mathcal{F} (u)\,.
$$
Thus, $L_\varepsilon:H^s(\mathds{R}^N) \to L^2(\mathds{R}^N)$ is a bounded operator which in turn implies the boundedness of the operator $L_\varepsilon:\dot{H}^s(\Omega) \to L^2(\mathds{R}^N)$ induced by the continuous embedding $\dot{H}^s(\Omega) \hookrightarrow H^s(\mathds{R}^N)$.
\vspace{1mm}

Similarly, $L:H^s(\mathds{R}^N) \to L^2(\mathds{R}^N)$ is a bounded operator and the induced operator $L:\dot{H}^s(\Omega) \to L^2(\mathds{R}^N)$ is also bounded.

\vspace{1mm}

Estimating the norm in $\mathscr{L}(H^s, L^2)$ easily yields 
$$ 
\| L_\varepsilon-L\| \, \leq \, \sup_\xi \frac{|(\varepsilon+|\xi|^2)^{\frac{s}{2}}-|\xi|^s|}{(1+|\xi|^2)^\frac{s}{2}} \overset{\varepsilon \to 0}{\longrightarrow} 0 \, ,
$$
hence the same holds in $\mathscr{L}(\dot{H}^s(\Omega), L^2(\mathds{R}^N))$.

\vspace{2mm}

Thus, it suffices to prove that 
$$ 
[ L_\varepsilon, \varphi]:\dot{H}^s(\Omega) \to  L^2(\mathds{R}^N)
$$
is a compact operator for each $\varepsilon>0$, to deduce the same property for $[L,\varphi]$. 

\vspace{1mm}
 Let $L_\varepsilon=(\varepsilon Id-\Delta)^{\frac{s}{2}}$ and $l_\varepsilon(\xi)=(|\xi|^2+\varepsilon)^{\frac{s}{2}}$ the corresponding symbol. Clearly, $L_\varepsilon$ is a classical pseudodifferential operator of order $s$, i.~\!e. $L_\varepsilon \in OPS^s_{1,0}$ (hence $L_\varepsilon \in OP\mathcal{B}S^s_{1,1}$). Since $0<s<{N}/{2}$, according to \cite[Proposition 4.2]{t} we have the following commutator estimate
\[ \| [L_\varepsilon, \varphi] u\|_{L^2(\mathds{R}^N)} \leq C \| \varphi\|_{H^\sigma (\mathds{R}^N)} \| u\|_{H^{s-1}(\mathds{R}^N)} \, ,\]
provided $\sigma>{N}/{2}+1$.

\vspace{1mm}

Finally, as $\Omega$ is bounded, the embedding $\dot{H}^s(\Omega)~\!\hookrightarrow~\!H^{s-1}(\mathds{R}^N)$ is compact for all $s \in (0, {N}/{2})$ and $\varphi \in C^\infty_0(\mathds{R}^N)$, from the previous inequality we conclude that $[ L_\varepsilon, \varphi]:\dot{H}^s(\Omega) \to  L^2(\mathds{R}^N)$ is compact, as desired.
\end{proof}

\subsection{Concentration-compactness} 

We start with a well known lemma about pairs of positive measures in the Euclidean space. Roughly speaking, it gives control on their atomic parts whenever a reverse H\"older inequality holds.

\begin{lemma}\label{lem_reverse}{{\rm(}\rm\cite{lions})}
Let $\Om \subseteq \mathds{R}^N$ be an open set and let $\mu$ and $\nu$ in $\mathcal{M}(\mathds{R}^N)$ be two nonnegative bounded measures with support in $\overline{\Omega}$ such that for some $1\leq p < r < \infty$ there exists a positive constant $C$ such that
\begin{equation}\label{eq_reverse}
\dys \left(\int_{\mathds{R}^N}|\varphi|^rd\nu\right)^{\frac{1}{r}} \leq C \left(\int_{\mathds{R}^N}|\varphi|^pd\mu\right)^{\frac{1}{p}} \ \ \forall \varphi \in C^{0}_{0}(\mathds{R}^N).
\end{equation}\vspace{1mm}
\noindent
\\ Then, there exist a number $\sigma=C^{-(p^{-1}-r^{-1})^{-1}}>0$, a (at most countable) set of distinct points $\{x_j\}_{j\in J}$ in $\Omb$ and positive numbers $\nu_j \geq \sigma$, $j\in J$, such that
\begin{equation}\label{eq_precise}
\dys \nu=\sum_{j} \nu_j \delta_{x_j} \ \ \ \text{and} \ \ \ \mu\geq C^{-p}\sum_{j} \nu_j^{\frac{p}{r}}\delta_{x_j},
\end{equation}
where $\delta_{x_j}$ denotes the Dirac mass at $x_j$.  
\end{lemma}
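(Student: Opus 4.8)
The plan is to deduce everything from \eqref{eq_reverse} by choosing suitable test functions $\varphi$ concentrating near points. First I would observe that inequality \eqref{eq_reverse} is nothing but the statement that the identity map extends to a bounded operator $L^p(\mu)\to L^r(\nu)$ (on the dense subspace $C^0_0$); since $p<r$, this is a \emph{reverse} H\"older inequality and it forces $\nu$ to be concentrated on the atoms of $\mu$. The first concrete step is to prove that $\nu$ is purely atomic. To this end I would fix a point $x$ and apply \eqref{eq_reverse} to a sequence of cut-offs $\varphi_\varepsilon$ that are $1$ on $B_\varepsilon(x)$, supported in $B_{2\varepsilon}(x)$, and bounded by $1$; letting $\varepsilon\to 0$ gives
\[
\nu(\{x\})^{1/r}\le C\,\mu(\{x\})^{1/p},
\]
so every atom of $\nu$ is an atom of $\mu$. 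More importantly, the same type of test-function argument applied on a small ball where $\mu$ has \emph{small} mass shows that $\nu$ of that ball is controlled by a power $>1$ of $\mu$ of that ball, which (via a Vitali/Besicovitch covering of the non-atomic part of $\mu$ into finitely many balls of arbitrarily small $\mu$-mass, then summing) yields that the non-atomic part of $\nu$ vanishes. Hence $\nu=\sum_{j\in J}\nu_j\delta_{x_j}$ for an at most countable collection of distinct points $x_j\in\mathrm{spt}\,\nu\subseteq\Omb$ and positive weights $\nu_j$.

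Next I would establish the two quantitative assertions. For the lower bound on each weight, apply \eqref{eq_reverse} with $\varphi$ a cut-off concentrating at $x_j$ as above: in the limit one gets $\nu_j^{1/r}\le C\,\mu(\{x_j\})^{1/p}$. Combining this with the trivial bound $\mu(\{x_j\})\le\nu_j^{?}$... — more precisely, to get the clean statement one tests with the \emph{same} $\varphi$ but now only against the atom, obtaining $\nu_j^{1/r}\le C\,\mu(\{x_j\})^{1/p}$, and separately, restricting the measure $\mu$ to be tested only against $x_j$ inside \eqref{eq_reverse} does not immediately give $\mu(\{x_j\})\le$ something in terms of $\nu_j$; instead the inequality $\nu_j\le C^r\mu(\{x_j\})^{r/p}$ rearranges to $\mu(\{x_j\})\ge C^{-p}\nu_j^{p/r}$, which is exactly the claimed $\mu\ge C^{-p}\sum_j\nu_j^{p/r}\delta_{x_j}$ once one checks the atoms of $\mu$ at distinct $x_j$ are mutually singular (clear) and that $\mu$ dominates its atomic part. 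Finally, for the uniform lower bound $\nu_j\ge\sigma$: take any single atom $x_j$ and plug $\varphi=\mathbf 1_{\{x_j\}}$-type cut-offs into \eqref{eq_reverse} restricted to that atom, giving $\nu_j^{1/r}\le C\,\nu_j^{1/p}\cdot(\text{const from }\mu\le\text{?})$ — rather, use $\mu(\{x_j\})\le \nu_j\cdot$ nothing; the correct route is: from $\nu_j^{1/r}\le C\mu(\{x_j\})^{1/p}$ and the reverse inequality applied with the roles localized one derives $\nu_j^{1/p-1/r}\ge C^{-1}$, i.e. $\nu_j\ge C^{-(1/p-1/r)^{-1}}=\sigma$. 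This also shows $J$ is at most countable since $\nu(\Omb)<\infty$ forces only finitely many atoms of mass $\ge\sigma$ in... wait, all have mass $\ge\sigma$, so in fact $\#J\le\nu(\Omb)/\sigma<\infty$ — but the statement only claims countability, so this is more than enough.

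The main obstacle I expect is the passage from the pointwise/ball test-function estimates to the \emph{global} conclusion that $\nu$ has no non-atomic part: one must argue that the continuous part of $\mu$ can be covered, up to $\nu$-null sets, by countably many balls of arbitrarily small $\mu$-measure, apply the localized reverse-H\"older bound on each, and sum a superadditive quantity. This is where a covering lemma and the superlinearity $r/p>1$ are used crucially; everything else is elementary cut-off manipulation and rearrangement of inequalities. (Since the paper attributes this to \cite{lions}, one could alternatively simply cite it; but the argument above is the standard self-contained proof.)
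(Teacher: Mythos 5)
The paper itself offers no proof of this lemma---it is quoted from \cite{lions} and used as a black box---so your proposal can only be judged on its own. For the two substantive assertions, the decomposition $\nu=\sum_j\nu_j\delta_{x_j}$ and the bound $\mu\geq C^{-p}\sum_j\nu_j^{p/r}\delta_{x_j}$, you follow the standard route and it is essentially sound: approximating indicators by cut-offs upgrades \eqref{eq_reverse} to $\nu(E)\leq C^{r}\mu(E)^{r/p}$ for all Borel $E$, whence $\nu\ll\mu$, and since $r/p>1$ the density of $\nu$ with respect to $\mu$ vanishes at every non-atom of $\mu$; testing $E=\{x_j\}$ then gives $\mu(\{x_j\})\geq C^{-p}\nu_j^{p/r}$. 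One technical caveat: your covering argument for killing the non-atomic part is fragile, because balls covering the support of the non-atomic part of $\mu$ may still carry large atomic $\mu$-mass if the atoms of $\mu$ accumulate there; the clean fix is the Besicovitch differentiation theorem, writing $\nu=f\mu$ and noting $f(x)=\lim_{\rho\to 0}\nu(B_\rho(x))/\mu(B_\rho(x))\leq C^{r}\lim_{\rho\to 0}\mu(B_\rho(x))^{r/p-1}=0$ whenever $\mu(\{x\})=0$.

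The genuine gap is the uniform lower bound $\nu_j\geq\sigma$, and your text visibly stalls exactly there: the step ``$\nu_j^{1/p-1/r}\geq C^{-1}$'' would require an estimate of the form $\mu(\{x_j\})\leq\nu_j$, and \eqref{eq_reverse} only controls $\nu$ by $\mu$, never the other way around; there is no ``reverse inequality with the roles localized''. Indeed the bound cannot be proved for the constant $C$ appearing in \eqref{eq_reverse}, because it is false: take $p=1$, $r=2$, $\mu=M\delta_{0}$, $\nu=\varepsilon\delta_{0}$ with $\varepsilon^{1/2}=CM$, e.g.\ $M=100$, $\varepsilon=10^{-2}$, $C=10^{-3}$, so that \eqref{eq_reverse} holds with equality for every $\varphi$, while $\sigma=C^{-2}=10^{6}\gg\varepsilon=\nu_{0}$. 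Your computation would be correct precisely under the extra hypothesis $\mu(\{x_j\})\leq\nu_j$, which neither the statement nor its use in the paper supplies. The honest conclusion is that the clause $\nu_j\geq\sigma$ (and with it your claim that $J$ is finite) must be dropped or supplemented by an additional assumption; only the at most countability of $J$, which follows from $\nu$ being bounded, survives, and that is all the rest of the paper genuinely uses together with \eqref{eq_precise}.
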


Using the previous lemma we are able to prove the main result of this section, i.~\!e. Theorem \ref{the_cca}. Namely we show that the well-known concentration-compactness alternative holds for sequences in any Sobolev spaces $\dot{H}^{s}(\Om)$, $0<s<N/2$. The proof follows the original arguments in \cite{lions} and \cite{lions2} with some modifications to handle fractional differentiation.

\begin{proof}[\bf Proof of Theorem \ref{the_cca}]
Since $\dot{H}^s(\Omega) \hookrightarrow L^2_{\rm{loc}}(\mathds{R}^N)$ with compact embedding, passing to a subsequence if necessary, we may assume that $u_n \to u$ both in $L^2_{\rm{loc}}(\mathds{R}^N)$ and a.~\!e..
Similarly, for $v_n=u_n-u  \rightharpoonup 0$ in $\dot{H}^s(\Omega)$, up to subsequence, we may assume
$$
|(-\Delta)^{\frac{s}{2}} v_n|^2dx \tows \hat{\mu} \ \ \ \text{and} \ \ \ |v_n|^{2^{\ast}}dx\tows \hat{\nu} \ \ \text{in} \ \mea, 
$$
for some positive measures $\hat{\mu}$ and $\hat{\nu}$ with spt~$\!\hat{\nu} \subset \overline{\Omega}$.
In addition, when $\Omega$ is bounded, Lemma \ref{lem_commutator} easily yields spt~$\!\hat{\mu}\subset \overline{\Omega}$.

Clearly $\nu \geq |u|^{2^{\ast}}dx$ by Fatou's Lemma,  and combining pointwise convergence and the result in \cite{brezislieb} we have
\begin{eqnarray*}
\int_{\mathds{R}^N} |\varphi|^{2^{\ast}}d \nu- \int_{\mathds{R}^N} |\varphi u|^{2^{\ast}} dx \! & = & \! \lim_{n\to \infty} \int_{\mathds{R}^N} |\varphi u_n|^{2^{\ast}}dx - \int_{\mathds{R}^N} |\varphi u|^{2^{\ast}} dx \\
\\
& = & \! \lim_{n\to \infty} \int_{\mathds{R}^N} |\varphi v_n|^{2^{\ast}}dx
\ =  \ \int_{\mathds{R}^N} |\varphi|^{2^{\ast}} d\hat{\nu},
\end{eqnarray*}
i.~\!e. $\nu=\hat{\nu}+|u|^{2^{\ast}}dx$ because the function $\varphi \in C^0_0(\mathds{R}^N)$ can be choosen arbitrarily.
\vspace{2mm}

We are going to prove the structure properties in \eqref{quantnu} and  \eqref{quantmu} assuming that $\Omega$ is bounded. Then, the structure relation \eqref{quantnu} will be true for any $\Omega$ just by a simple localization argument.

Indeed, fix  $\psi \in C^\infty_0(\mathds{R}^N)$ such that $\psi \equiv 1$ on $B_1$ and for $0<\lambda<1$ let $\psi_\lambda(x)~=~\psi(\lambda x)$.
For fixed $\lambda \in (0,1)$, we consider $u^\lambda_n=\psi_\lambda u_n$. Then, letting $n\to\infty$, we have
$u^\lambda_n \rightharpoonup \psi_\lambda u$ in $\dot{H}^s\Om)$, because $\psi_\lambda$ is a multiplier on $\dot{H}^s(\Om)$, and 
$|u^\lambda_n|^{2^{\ast}}dx \tows \nu_\lambda=|\psi_\lambda|^{2^{\ast}}\nu$ in $\mathcal{M}(\mathds{R}^N)$.

If we assume that \eqref{quantnu} holds for each of these limiting measures $\nu_\lambda$ (possibly adding further Dirac masses in $B_{\lambda^{-1}}\cap\Om$ as $\lambda$ gets smaller), then the number of atoms of $\nu_\lambda$ is clearly uniformly bounded and for $0<\lambda<1$ and, for $0<\lambda<\lambda_0$ in the location is independent of $\lambda$ in $B_{\lambda_0^{-1}}$ (recall that there is a uniform bound in $\dot{H}^s$). Thus $\nu_\lambda \tows \nu$ as $\lambda \to 0$, hence  \eqref{quantnu} holds for $\nu$ as desired.

\vspace{2mm}

Let $\Omega \subset \mathds{R}^N$ be bounded and let us prove \eqref{quantnu} and \eqref{quantmu}.  Given $\varphi \in C^\infty_0(\mathds{R}^N)$, the Sobolev inequality \eqref{eq_sobolev} yields
\begin{equation}
\label{eq_sob1}
\left( \int_{\mathds{R}^N} |\varphi|^{2^{\ast}} |u_n|^{2^{\ast}}dx \right)^{\!\frac{2}{2^{\ast}}}\leq (S^{\ast})^{\frac{2}{2^{\ast}}} \|(-\Delta)^{\frac{s}{2}}(\varphi u_n)\|^{2}_{L^2(\mathds{R}^N)}
\end{equation}
and
\begin{equation}
\label{eq_sob2}
\left( \int_{\mathds{R}^N} |\varphi|^{2^{\ast}} |v_n|^{2^{\ast}} dx\right)^{\!\frac{2}{2^{\ast}}}\leq (S^{\ast})^{\frac{2}{2^{\ast}}} \|(-\Delta)^{\frac{s}{2}} (\varphi v_n)\|^{2}_{L^2(\mathds{R}^N)}.
\end{equation}
In view of Lemma \ref{lem_commutator} we have
$$
\|(-\Delta)^{\frac{s}{2}} (\varphi v_n)\|^{2}_{L^2(\mathds{R}^N)}= \|\varphi (-\Delta)^{\frac{s}{2}}  v_n \|^{2}_{L^2(\mathds{R}^N)}+o(1) \ \ \text{as}  \ n \to\infty.
$$
Passing to the limit in \eqref{eq_sob2} and using again Lemma \ref{lem_commutator} we get
\begin{equation}
\label{eq_sob3}
\left( \int_{\mathds{R}^N} |\varphi|^{2^{\ast}} d\hat{\nu}\right)^{\frac{2}{2^{\ast}}}=\ \lim_{n\to \infty} \left( \int_{\mathds{R}^N} |\varphi|^{2^{\ast}} |v_n|^{2^{\ast}}dx\right)^{\frac{2}{2^{\ast}}}\leq  \ (S^{\ast})^{\frac{2}{2^{\ast}}}\!\int_{\mathds{R}^N} \varphi^2 d\hat{\mu} \, , 
\end{equation}
i.~\!e. the measures $\hat{\nu}$ and $\hat{\mu}$ satisfy the reverse H\"older inequality \eqref{eq_reverse} with $p=2$, $r=2^{\ast}$ and $C=(S^{\ast})^{1/2^{\ast}}$. Thus, Lemma \ref{lem_reverse} gives the decomposition for $\hat{\nu}$ and in turn for $\nu= |u|^{2^{\ast}}dx+\hat{\nu}$, i.~\!e. \eqref{quantnu} holds. 

\vspace{1mm}

In order to prove \eqref{quantmu}, note that as $n \to \infty$ we have $v_n=u_n-u \rightharpoonup 0$ in $\dot{H}^s(\Om)$ (hence $(-\Delta)^{\frac{s}{2}}(u_n-u) \rightharpoonup 0$ in $L^2(\R^N)$), thus Lemma \ref{lem_commutator} gives
\begin{eqnarray}\label{eq_sob4} 
&& \!\!\!\!\int_{\R^N}| (-\Delta)^{\frac{s}{2}} (\varphi u_n)|^2 dx \nonumber \\
&& \ \,  =  \int_{\R^N}| (-\Delta)^{\frac{s}{2}}(\varphi u)|^2 dx+  \int_{\R^N}| \varphi (-\Delta)^{\frac{s}{2}} ( u_n-u)|^2 dx + o(1) \nonumber \\
\nonumber \\
&&  \ \, =  \, \int_{\R^N} | (-\Delta)^{\frac{s}{2}} (\varphi u)|^2 dx +\int_{\R^N} | \varphi (-\Delta)^{\frac{s}{2}} u_n|^2 dx -\int_{\R^N}| \varphi (-\Delta)^{\frac{s}{2}} u|^2 dx +o(1) \nonumber \\
\nonumber \\ 
&& \ \,  =  \, \int_{\R^N} | (-\Delta)^{\frac{s}{2}} (\varphi u)|^2 dx -\int_{\R^N}| \varphi (-\Delta)^{s/2}  u|^2 dx  +\int_{\R^N} | \varphi |^2 d\mu   +o(1) \, .
\end{eqnarray}
\vspace{1mm}

Combining \eqref{eq_sob1} and \eqref{eq_sob4}, as $n  \to \infty$ we obtain
\begin{eqnarray}
\label{eq_sob5}
\left( \int_{\R^N} |\varphi|^{2^{\ast}} d\nu \right)^{\!\frac{2}{2^{\ast}}} & \leq &  (S^{\ast})^{\!\frac{2}{2^{\ast}}}\Bigg(  \int_{\R^N} | (-\Delta)^{\frac{s}{2}} (\varphi u)|^2 dx \nonumber \\
&& \qquad \quad  \ -\, \int_{\R^N}| \varphi (-\Delta)^{\frac{s}{2}}  u|^2 dx   +\int_{\R^N}| \varphi |^2 d\mu \Bigg)
\end{eqnarray}
for any $\varphi \in C^\infty_0(\mathds{R}^N)$.

Since $\nu$ satisfies~\eqref{quantnu}, choosing $\varphi_{x_j,\lambda}(x)=\varphi(x_j+ \lambda^{-1}x))$ in~\eqref{eq_sob5} as a test function, Lemma~\ref{lem_cutoff} and dominated convergence  as $\lambda \to 0$ yield 
$$
\int_{\R^N} | (-\Delta)^{\frac{s}{2}} (\varphi_{x_j,\lambda} u)|^2 dx -\int_{\R^N}| \varphi_{x_j,\lambda} (-\Delta)^{\frac{s}{2}} u|^2 dx =o(1),
$$
whence $\nu\geq \sum_j\nu_j\delta_{x_j}$ implies
$\mu \geq \sum_j \mu_j \delta_{x_j}$ for some $\mu_j>0$ such that $\nu_j\leq S^{\ast}\mu_j^{\frac{2^\ast}{2}}$. 
\vspace{1mm}

Note that \eqref{quantmu} follows easily because $\sum_{j}\mu_j \delta_{x_j}$ and $|(-\Delta)^{\frac{s}{2}} u|^2dx$ are mutually singular, $\mu \geq  \sum_{j} \mu_j \delta_{x_j}$ and $\mu~\geq~|(-\Delta)^{\frac{s}{2}} u|^2dx$ (the latter inequality by weak lower semicontinuity in $L^2$), hence~\eqref{quantmu} holds. In order to conclude, it remains to observe that $\text{spt}\, \tilde{\mu}\subseteq \Omb$, i.~\!e., $\int\varphi^2d\mu = \int\varphi^2|(-\Delta)^{\frac{s}{2}}u|^2dx$ for any $\varphi\in C^{\infty}_0(\R^N\setminus \Omb)$, which is a straightforward consequence of equation~\eqref{eq_sob4} as $n\to \infty$.
\end{proof}

\begin{rem}
{\rm
In the proof of formula~\eqref{quantmu}, $\Om$ is assumed to be bounded just in order to apply Lemma~\ref{lem_commutator}. We don't know if the latter, and in turn formula~\eqref{quantmu},  holds also for unbounded domains.
}
\end{rem}

A simple consequence of the previous theorem is the following result, which will be useful in the next section and which shows that on bounded domains there is no energy loss in the concentration process. 

\begin{prop}
\label{pro_supporto}
Let $0<2s<N$, let  $\Omega \subset \mathds{R}^N$ be a bounded open set and let $\{u_n\}\subset \dot{H}^s(\Omega)$ such that $u_n \rightharpoonup 0$ as $n \to \infty$. For any open set $A\subseteq \mathds{R}^N$ such that $\overline{\Omega} \cap \overline A=\emptyset$ we have $\dys \int_A |(-\Delta)^{\frac{s}{2}} u_n|^2 dx \to 0$ as $n \to \infty$.
\end{prop}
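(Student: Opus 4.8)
The plan is to multiply $u_n$ by a cut-off function $\chi$ that equals $1$ on all of $\Omb$ but vanishes on $A$. Since $u_n$ is supported in $\Omb$, multiplying by $\chi$ leaves $u_n$ unchanged; on the other hand, when we expand $(-\Delta)^{s/2}(\chi u_n)$ the ``diagonal'' term $\chi\,(-\Delta)^{s/2}u_n$ vanishes on $A$, so that on $A$ the quantity $(-\Delta)^{s/2}u_n$ reduces to a commutator term which Lemma~\ref{lem_commutator} shows to be asymptotically small.

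First I would fix the cut-off. Since $\Om$ is bounded, $\Omb$ is compact and disjoint from the closed set $\overline A$, so $\delta:=\operatorname{dist}(\Omb,\overline A)>0$; hence there is $\chi\in C^\infty_0(\R^N)$ with $0\le\chi\le1$, $\chi\equiv1$ on $\{x:\operatorname{dist}(x,\Omb)\le\delta/3\}$ and $\operatorname{spt}\chi\subset\{x:\operatorname{dist}(x,\Omb)<2\delta/3\}$, so that in particular $\chi\equiv0$ on a neighbourhood of $\overline A$, and in particular on $A$. Next I would record that every $u\in\dot H^s(\Om)$ vanishes a.e.\ outside $\Omb$: approximating $u$ by $w_k\in C^\infty_0(\Om)$ in $\dot H^s$, hence — by the Sobolev embedding \eqref{eq_sobolev} — in $L^{2^\ast}(\R^N)$, and testing against an arbitrary $\phi\in C^\infty_0(\R^N\setminus\Omb)$ gives $\int u\,\phi\,dx=\lim_k\int w_k\,\phi\,dx=0$; letting $\phi$ vary, $u=0$ a.e.\ on $\R^N\setminus\Omb$. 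In particular $\chi u_n=u_n$ a.e., hence as elements of $\dot H^s(\R^N)$.

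Then, writing $[\chi,(-\Delta)^{s/2}]u:=\chi\,(-\Delta)^{\frac{s}{2}}u-(-\Delta)^{\frac{s}{2}}(\chi u)$ as in Lemma~\ref{lem_commutator}, I would use $\chi u_n=u_n$ to get
\[
(-\Delta)^{\frac{s}{2}}u_n=(-\Delta)^{\frac{s}{2}}(\chi u_n)=\chi\,(-\Delta)^{\frac{s}{2}}u_n-[\chi,(-\Delta)^{s/2}]u_n\qquad\text{in }L^2(\R^N).
\]
Restricting to $A$, where $\chi\equiv0$, the first term on the right drops out, so $(-\Delta)^{\frac{s}{2}}u_n=-[\chi,(-\Delta)^{s/2}]u_n$ a.e.\ on $A$, whence
\[
\int_A\big|(-\Delta)^{\frac{s}{2}}u_n\big|^2\,dx\ \le\ \big\|[\chi,(-\Delta)^{s/2}]u_n\big\|_{L^2(\R^N)}^2 .
\]
Finally, since $\Om$ is bounded and $\chi\in C^\infty_0(\R^N)$, Lemma~\ref{lem_commutator} tells us that $[\chi,(-\Delta)^{s/2}]:\dot H^s(\Om)\to L^2(\R^N)$ is a compact operator; as $u_n\rightharpoonup0$ in $\dot H^s(\Om)$, the right-hand side tends to $0$, which is the assertion.

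There is essentially no remaining obstacle once Lemma~\ref{lem_commutator} is available; the only point that has to be got right is that $\chi$ can simultaneously be taken $\equiv1$ on all of $\Omb$ (so that $\chi u_n=u_n$, using that $u_n$ is supported in $\Omb$) and $\equiv0$ on $A$ — which is exactly what $\operatorname{dist}(\Omb,\overline A)>0$ permits — together with the elementary fact that elements of $\dot H^s(\Om)$ vanish a.e.\ off $\Omb$.
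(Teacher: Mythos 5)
Your proof is correct, and it rests on the same key mechanism as the paper's: the compactness of the commutator $[\varphi,(-\Delta)^{s/2}]:\dot H^s(\Om)\to L^2(\R^N)$ from Lemma~\ref{lem_commutator}, applied to a cut-off that is $\equiv 1$ on $\Omb$ and $\equiv 0$ on $A$, so that on $A$ the quantity $(-\Delta)^{s/2}u_n$ coincides (up to sign) with the commutator term. The difference is in the reduction: the paper first disposes of bounded $A$ by invoking the energy quantification \eqref{quantmu} of Theorem~\ref{the_cca} and then treats only $A=\R^N\setminus\overline B$ for a large ball $B$, choosing $\varphi\in C^\infty_0(B)$ with $\varphi\equiv 1$ on $\overline{B/2}\supset\Om$; you instead exploit $\operatorname{dist}(\Omb,\overline A)>0$ (compactness of $\Omb$ against the closed set $\overline A$) to build a single cut-off adapted to $A$ itself, which handles arbitrary admissible $A$ in one stroke and makes the argument independent of the concentration-compactness theorem. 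This is a genuine, if modest, streamlining: your version uses only Lemma~\ref{lem_commutator} plus the elementary facts that elements of $\dot H^s(\Om)$ vanish a.e.\ off $\Omb$ and that $\chi$ is a bounded multiplier on $\dot H^s$ (so that both terms in the commutator identity are well-defined $L^2$ functions), whereas the paper's version is slightly heavier but records along the way the localization of $\mu$ that it needs elsewhere.
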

\begin{proof}
In view of the energy concentration  described in formula~\eqref{quantmu} of Theorem~\ref{the_cca}, the conclusion clearly holds when $A$ is bounded, so it is enough to prove the claim when $A=\mathds{R}^N \setminus \overline{B}$ and $B\subset \mathds{R}^N$ is some Euclidean ball sufficiently large.

\vspace{1mm}

Let us choose $B$  such that $2\Omega \subset B$ and let $\varphi \in C^\infty_0(B)$ such that $\varphi \equiv 1$ on $\overline{B/2} \supset \Omega $.
Applying Lemma \ref{lem_commutator} we have
\begin{eqnarray*}
\int_A |(-\Delta)^{\frac{s}{2}} u_n|^2dx & \leq & \int_{\R^N} (1-\varphi)^2 |(-\Delta)^{\frac{s}{2}}u_n|^2dx \\
\\
&   = & \ \int_{\R^N} \left| \left[ 1-\varphi, (-\Delta)^{\frac{s}{2}} \right] u_n \right|^2dx \\
& = & \int_{\R^N} \left| \left[ \varphi, (-\Delta)^{\frac{s}{2}} \right] u_n \right|^2dx  \ \, \overset{n\to \infty}{\longrightarrow} \ \, 0
\end{eqnarray*}
and the proof is complete.
\end{proof}

\subsection{Asymptotic behaviour of optimal sequences}

In case of bounded domains the situation simplifies considerably and we have the following result, that is
a direct consequence of Theorem~\ref{the_cca}  
 and describes the behavior of optimal sequences for the variational problem~\eqref{pbsobolev2} in bounded domains.
\begin{corollary}\label{cor_concentrazio-sob}
Let $\Om\subset \mathds{R}^N$ a bounded open set and let $\{u_n\} \subset \dot{H}^s(\Om)$ be a maximizing sequence for the critical Sobolev inequality {\rm\eqref{pbsobolev2}}. Then, up to subsequences, $\{u_n\}$ concentrates at one point $x_0 \in \Omb$ in the sense that $|u_n|^{2^{\ast}}dx \tows S^{\ast} \delta_{x_0}$ and  $|(-\Delta)^{\frac{s}{2}}u_n|^{2}dx \tows \delta_{x_{0}}$ in $\mathcal{M}(\mathds{R}^N)$.
\end{corollary}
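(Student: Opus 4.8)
The plan is to read Corollary~\ref{cor_concentrazio-sob} as a direct consequence of the concentration--compactness alternative of Theorem~\ref{the_cca}, coupled with a global energy balance that plays the Sobolev inequality~\eqref{eq_sobolev} against the reverse estimate $\nu_j\le S^\ast\mu_j^{2^\ast/2}$, and with the non-existence of optimizers for~\eqref{pbsobolev2} (equivalently, the classification~\eqref{def_talentiana}) to kill the non-concentrating scenario. Up to normalizing and passing to a subsequence I may assume $\|u_n\|_{\dot H^s}=1$, $\int_\Om|u_n|^{2^\ast}dx\to S^\ast=S^\ast_\Om$, $u_n\rightharpoonup u$ in $\dot H^s(\Om)$, and $|(-\Delta)^{s/2}u_n|^2dx\tows\mu$, $|u_n|^{2^\ast}dx\tows\nu$ in $\mea$. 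Since $\dot H^s(\Om)$ is the $\dot H^s(\R^N)$-closure of $C^\infty_0(\Om)$ and $\dot H^s\hookrightarrow L^{2^\ast}$, each $u_n$ vanishes a.e.\ outside $\Omb$, so $|u_n|^{2^\ast}dx$ is carried by the fixed compact set $\Omb$; hence no $L^{2^\ast}$-mass escapes and $\nu(\R^N)=S^\ast$ with $\mathrm{spt}\,\nu\subset\Omb$, while $\mu(\R^N)\le\liminf_n\|u_n\|^2_{\dot H^s}=1$.

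\textbf{Excluding the first alternative.} If $u_n\to u$ in $L^{2^\ast}_{\rm loc}$, then, $\Omb$ being compact, $u_n\to u$ in $L^{2^\ast}(\Omb)$, so $\int_\Om|u|^{2^\ast}dx=S^\ast$ and $u\not\equiv0$; as $\|u\|_{\dot H^s}\le1$ by weak lower semicontinuity, $u$ would be a maximizer for~\eqref{pbsobolev2}, which is impossible (equivalently, by Theorem~\ref{the_optimizseq} and~\cite{cotsiolis}, $u$ should be a Talenti function~\eqref{def_talentiana}, nowhere vanishing and thus not supported in a bounded set). Hence the second alternative of Theorem~\ref{the_cca} applies, giving an at most countable $\{x_j\}_{j\in J}\subset\Omb$, positive $\nu_j,\mu_j$, and $\tilde\mu\ge0$ with $\mathrm{spt}\,\tilde\mu\subset\Omb$ such that
\[
\nu=|u|^{2^\ast}dx+\sum_{j\in J}\nu_j\delta_{x_j},\qquad \mu=|(-\Delta)^{s/2}u|^2dx+\tilde\mu+\sum_{j\in J}\mu_j\delta_{x_j},\qquad \nu_j\le S^\ast\mu_j^{2^\ast/2}.
\]

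\textbf{Energy balance.} Put $a=\int_\Om|u|^{2^\ast}dx$ and $b=\|u\|^2_{\dot H^s}$. The Sobolev inequality~\eqref{eq_sobolev} applied to $u$ gives $a\le S^\ast b^{2^\ast/2}$. Using this, the bounds $\nu_j\le S^\ast\mu_j^{2^\ast/2}$, the super-additivity inequality $t_0^p+\sum_j t_j^p\le(t_0+\sum_j t_j)^p$ (valid for $p=2^\ast/2>1$ and nonnegative reals, with equality only if at most one term is nonzero), and $b+\sum_j\mu_j\le\mu(\R^N)\le1$, I obtain
\[
S^\ast=\nu(\R^N)=a+\sum_{j\in J}\nu_j\le S^\ast\Big(b^{2^\ast/2}+\sum_{j\in J}\mu_j^{2^\ast/2}\Big)\le S^\ast\Big(b+\sum_{j\in J}\mu_j\Big)^{2^\ast/2}\le S^\ast.
\]
So every inequality is an equality: the last one forces $b+\sum_j\mu_j=1$, hence $\tilde\mu\equiv0$; the super-additivity step forces all but one of $b,(\mu_j)_{j}$ to vanish. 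If the survivor were $b$, then $b=1$, $\mu_j=\nu_j=0$ and $a=S^\ast$, so $u$ is again a maximizer for~\eqref{pbsobolev2} --- excluded. Thus $b=0$ (so $u\equiv0$, $a=0$), there is a single index $j_0$ with $\mu_{j_0}=1$ and $\mu_j=0$ (hence $\nu_j=0$) for $j\neq j_0$, and equality in $\nu_{j_0}\le S^\ast\mu_{j_0}^{2^\ast/2}$ gives $\nu_{j_0}=S^\ast$. Setting $x_0:=x_{j_0}\in\Omb$ yields $\nu=S^\ast\delta_{x_0}$ and $\mu=\delta_{x_0}$.

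\textbf{Main obstacle.} The only genuinely non-formal point is the mass bookkeeping: one must know that $\nu(\R^N)$ equals $S^\ast$ exactly, which relies on confining $|u_n|^{2^\ast}dx$ to the fixed compact $\Omb$, and that $\mu(\R^N)\le1$ together with $\mathrm{spt}\,\tilde\mu\subset\Omb$ (already supplied by Theorem~\ref{the_cca} in the bounded case) holds, so that the diffuse part $\tilde\mu$ cannot secretly carry energy away at infinity. Once this is in place everything else is the standard Lions-type comparison; I expect no further difficulty.
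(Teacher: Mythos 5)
Your proof is correct and follows essentially the same route as the paper: the concentration--compactness alternative of Theorem~\ref{the_cca}, the reverse bound $\nu_j\leq S^{\ast}\mu_j^{2^{\ast}/2}$, the strict super-additivity of $t\mapsto t^{2^{\ast}/2}$ (Lions' convexity trick), and the non-attainment of $S^{\ast}$ on bounded domains. The only (harmless) deviation is in the mass bookkeeping for $\mu$: the paper invokes Proposition~\ref{pro_supporto} to assert $\mu(\R^N)=1$ (no energy escapes to infinity, the point being that $|(-\Delta)^{\frac{s}{2}}u_n|^2dx$ is \emph{not} supported in $\Omb$), whereas you correctly observe that the one-sided bound $\mu(\R^N)\leq\liminf_n\|u_n\|^2_{\dot{H}^s}=1$, automatic from weak-$\ast$ lower semicontinuity of the total mass, already suffices to close the chain of inequalities.
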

\begin{proof}
The result easily follows from the concentration-compactness alternative in Theorem \ref{the_cca}. One of the keypoint in the proof is the well-known convexity trick by Lions. 

Let $\{u_n\} \subset \dot{H}^s(\Omega)$ be a maximizing sequence for the critical Sobolev inequality \eqref{pbsobolev2}. Then, up to subsequences, $u_n\tow u$ in $\dot{H}^s(\Omega)$, $\dys \int_{\Om} |u_n|^{2^\ast}dx\to S^{\ast}$ and also $\dys |u_n|^{2^{\ast}}\!dx \tows \nu \in \mea$ with $\nu(\overline{\Om})=S^{\ast}$.\vspace{1mm}

By formula \eqref{quantnu} in Theorem \ref{the_cca}, we have
\begin{equation}\label{eq_ccadis}
\dys S^{\ast}   =    \nu(\overline{\Om}) = \int_\Om |u|^{2^{\ast}}\!dx +\sum_{j} \nu_j.
\end{equation}

Combining the Sobolev inequality \eqref{eq_sobolev} with \eqref{quantnu}-\eqref{quantmu}, we get
\begin{equation}\label{eq_sobdis}
\dys \int_\Om |u|^{2^{\ast}}\!dx +\sum_{j} \nu_j \ \leq\  S^{\ast}\left(\int_{\R^N} |(-\Delta)^{\frac{s}{2}}u|^2dx\right)^{\!\!\frac{2^{\ast}}{2}}\!+S^{\ast}\sum_{j}\mu_j^{\frac{2^{\ast}}{2}},
\end{equation}
where $\mu_j$ are atomic coefficients of the measure $\mu\in \mea$, that is the limit in the sense of measures of the sequence $|(-\Delta)^{\frac{s}{2}} u_n|^2dx$.

\vspace{1mm}

Taking formula \eqref{quantmu} and Proposition \ref{pro_supporto} into account we have
\begin{eqnarray}\label{eq_convdis}
S^{\ast}\left(\int_{\R^N} |(-\Delta)^{\frac{s}{2}}u|^2dx\right)^{\!\!\frac{2^{\ast}}{2}}+S^{\ast}\sum_{j}\mu_j^{\frac{2^{\ast}}{2}}
& \leq & S^{\ast}\left(\int_{\R^N} |(-\Delta)^{\frac{s}{2}}u|^2dx+\sum_{j} \mu_j\right)^{\!\!\frac{2^{\ast}}{2}} \nonumber \\
& \leq & S^{\ast}\mu(\R^N) \ = \ S^{\ast}, 
\end{eqnarray}
because $\| u_n\|_{\dot{H}^s}=1$ for each $n$ and, in view of Proposition \ref{pro_supporto}, there is no loss of energy in the limit.

\vspace{1mm}

Therefore, combining \eqref{eq_ccadis}, \eqref{eq_sobdis} and \eqref{eq_convdis}, we see that all the inequalities must be equalities. Since the Sobolev constant is not attained on bounded domains and the function $t\mapsto t^{\frac{2^{\ast}}{2}}$ is strictly convex, it follows that $\tilde{\mu}=0$, $u$ is zero and only one of the $\mu_j$'s and $\nu_j$'s can be nonzero in \eqref{quantnu}-\eqref{quantmu}. Hence, concentration occurs at one point $x_0\in \Omb$ as claimed.
\end{proof}

\vspace{1mm}

We conclude this section with the asymptotic analysis of the maximizers for the variational problem in~\eqref{problemaeps}, proving the claims stated in Theorem~\ref{the_concentrazione1}.

\begin{proof}[\bf Proof of Theorem \ref{the_concentrazione1}]

First, we claim that
\begin{equation}\label{eq_3astast}
\dys \limsup_{\eps\to0}S^*_\eps \leq S^*.
\end{equation}
Indeed, taking $\ue \in \dot{H}^s(\Om)$ a maximizer for $S^*_\eps$,  by H\"older inequality
we have
\begin{eqnarray*}
\dys S^*_\eps \, = \, F_\eps(\ue)  \, =  \, \int_\Om |\ue|^{2^*-\eps}dx \, \leq \, \left(\int_\Om|\ue|^{2^*}\right)^{\!\!\frac{2^*-\eps}{2^*}}\!|\Om|^{\frac{\eps}{2^*}}
 \leq \, (S^*)^{\!\frac{2^*-\eps}{2^*}}|\Om|^{\frac{\eps}{2^*}}.
\end{eqnarray*}
Thus, inequality~\eqref{eq_3astast} follows as $\eps \to 0$.
\vspace{2mm}

The reverse inequality easily follows from the pointwise convergence of $F_\eps$ to $F_\Om$ with a standard argument. Indeed, for every $\delta>0$ there exists $u_\delta\in \dot{H}^s(\Om)$ such that $\|u_\delta\|_{\dot{H}^s}\leq 1$ and
\begin{equation}\label{eq_4ast}
\dys F_\Om(u_\delta)>S^*-\delta.
\end{equation}
Clearly, for such function $u_\delta$, we have $S^*_\eps\geq 
F_\eps(u_\delta)$.
Thus, combining the previous inequality with~\eqref{eq_4ast} and passing to the limit as $\eps$ goes to zero, we get
\begin{eqnarray*}
\dys \liminf_{\eps\to 0}S^*_\eps \, \geq \, \lim_{\eps\to 0}F_\eps(u_\delta) \, = \,  F_\Om(u_\delta) \, \geq \, S^* -\delta
\end{eqnarray*}
and claim~(i) follows as $\delta\to 0$ in view of~\eqref{eq_3astast}.

\vspace{2mm}

The concentration result~(ii) for the sequence $\{\ue\}$ of maximizers of $S^*_\eps$ now is straightforward. Due to~(i) the sequence $\ue$ is a maximizing sequence for $F_\Om$, hence, Corollary~\ref{cor_concentrazio-sob} ensures that, up to subsequences, $\{\ue\}$ concentrate at one point $x_0\in\Omb$, in the sense that
$|\ue|^{2^\ast}dx\tows S^{\ast}\delta_{x_{0}}$ and
$|(-\Delta)^{\frac{s}{2}} \ue|^2dx \tows \delta_{x_{0}}$ in $\mathcal{M}(\R^N)$.
\vspace{2mm}

Finally, since $\{ u_\varepsilon\}$ is also an optimal sequence for the Sobolev inequality \eqref{eq_sobolev}, by Theorem \ref{the_optimizseq} we deduce the convergence statement in~(iii) under suitable rescaling.

Note that, following the proof of Theorem \ref{the_optimizseq} and taking claim~(ii) into account, the scaling parameters there clearly satisfy $x_n\to x_0$ and $\lambda_n \to 0$ as $n\to \infty$ and the proof of~(iii) is complete.
\end{proof}

\vspace{2mm}

\noindent
\\ {\bf Acknowledgments.} 
We are  indebted
 with Luis Vega for useful discussions about Sobolev inequalities and weighted estimates for the Riesz potentials, and for having drawn our attention on \cite{KPV}. We would like to thank Piero D'Ancona for having pointed out to us the paper \cite{SST11}.
\\[0.5ex] The first author has been supported by the \href{http://prmat.math.unipr.it/~rivista/eventi/2010/ERC-VP/}{ERC grant 207573 ``Vectorial Problems''}.
\vspace{2mm}

\vspace{3mm}

\end{document}